\documentclass{article}
\usepackage{graphicx} 
\usepackage{amsmath,amssymb,amsfonts,amsthm,mathtools}
\usepackage{todonotes, hyperref}
\usepackage{bm, dsfont} 
\usepackage[left=3cm, right=3cm,top=2cm,bottom=2cm]{geometry}

\usepackage{tikz}
\usetikzlibrary{positioning, shapes.geometric, arrows.meta, shadows, calc, backgrounds,cd, fit}
\usepackage{pgfplots}
\pgfplotsset{compat=1.18} 
\usepackage{tikz-3dplot}
\usepackage[normalem]{ulem} 
\usepackage{booktabs} 
\usepackage{afterpage}
\usepackage{wrapfig} 
\usepackage{subcaption} 

\newcommand{\x}{\bm{x}}
\newcommand{\y}{\bm{y}}
\newcommand{\z}{\bm{z}}
\renewcommand{\S}{\mathbb{S}}
\newcommand{\R}{\mathbb{R}}

\newcommand{\B}{\mathbb{B}}
\newcommand{\N}{\mathbb{N}}
\newcommand{\T}{\mathbb{T}}
\newcommand{\D}{\mathcal{D}}
\newcommand{\E}{\mathcal{E}}
\newcommand{\U}{\Upsilon}
\newcommand{\I}{\mathcal I}
\renewcommand{\AA}{\mathcal{A}}

\newcommand{\norm}[1]{\left\lVert \smash{#1} \right\rVert}
\renewcommand{\d}{\, \mathrm{d}} 
\newcommand{\dx}{\mathrm{d}} 

\newcommand{\supp}{\textrm{supp\,}}

\newtheorem{theorem}{Theorem}[section]
\newtheorem{lemma}[theorem]{Lemma}

\theoremstyle{definition}
\newtheorem{definition}[theorem]{Definition}
\newtheorem{example}[theorem]{Example}
\newtheorem{remark}[theorem]{Remark}
\numberwithin{equation}{section}

\newcommand{\changed}[1]{{#1}} 

\title{Sensitivity Analysis on the Sphere and a Spherical ANOVA Decomposition}
\author{ 
	\href{https://orcid.org/0000-0000-0000-0000}{\includegraphics[scale=0.06]{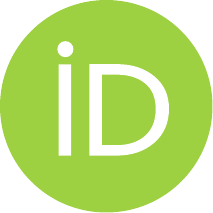}\hspace{1mm}Laura Weidensager} \\
	Department of Mathematics\\
	Simon Fraser University\\
	Canada\\
	\texttt{laura\_weidensager@sfu.ca}\\
}
\date{\today}

\providecommand{\keywords}[1]
{\textbf{\textit{Keywords --}} #1}

\providecommand{\msc}[1]
{ \textbf{\textit{Mathematics Subject Classification (2020) --}} #1}

\begin{document}

\maketitle
\begin{abstract}
     We establish sensitivity analysis on the sphere. We present formulas that allow us to decompose a function $f\colon \mathbb S^d\rightarrow \mathbb R$ into a sum of terms $f_{\boldsymbol u,\boldsymbol \xi}$. The index $\boldsymbol u$ is a subset of $\{1,2,\ldots,d+1\}$, where each term $f_{\boldsymbol u,\boldsymbol \xi}$ depends only on the variables with indices in $\boldsymbol u$. In contrast to the classical analysis of variance (ANOVA) decomposition, we additionally use the decomposition of a function into functions with different parity, which adds the additional parameter $\boldsymbol \xi$. The natural geometry on the sphere naturally leads to the dependencies between the input variables. Using certain orthogonal basis functions for the function approximation, we are able to model high-dimensional functions with low-dimensional variable interactions. 
     \par\medskip 
    \keywords{ANOVA decomposition, high-dimensional sphere, orthogonal polynomials }
    \par\medskip
    \msc{41A10, 41A63, 65D15, 65D40}
\end{abstract}

\section{Introduction}
Functions on the sphere arise in diverse applications, ranging from geophysics to quantum mechanics. To efficiently analyze and approximate such functions, a deep understanding of their internal structure is essential. 
The ANOVA (analysis of variance) decomposition provides a remarkably elegant framework for understanding the structure of multivariate functions. In the classical setting, where the domain is a tensor product space, the decomposition exhibits a particularly transparent form: component functions corresponding to different variable subsets are orthogonal, and the hierarchical interaction structure can be characterized cleanly. However, when the domain is the high-dimensional sphere, this clarity gets lost. The lack of a natural product structure complicates both the definition and computation of ANOVA components, making the extension of classical results to spherical domains highly nontrivial. \changed{A general explicit definition of an unique ANOVA decomposition for dependent input variables is not possible, ~\cite{Ho07, LiRa12, Ra142, PoWe24}. We study here an iterative definition for the spherical case, where we are able to transfer the properties of the classical decomposition partially to the sphere. }\par

In this paper, we develop a spherical ANOVA decomposition that allows us to split the total variance of the function into contributions of specific groups of variables. This approach not only yields valuable insights into variable importance but also forms the theoretical foundation for adaptive approximation schemes. \changed{We focus here on an analysis-based approach for describing the ANOVA decomposition on the sphere for the first time. This approach does not answer all application-based questions, which have already been resolved for the classical ANOVA decomposition: For example an approximation theoretical point of view that investigates which function spaces can be described using low-dimensional approximations.}\par

The key difference to the classical decomposition is that we do not have a tensor product structure of the domain here, which makes modeling more difficult and forces the introduction of additional parameters and structures. We found that the decomposition of the function into terms of different parity is a key ingredient for the spherical ANOVA decomposition.\par
\bigskip 
\noindent
\textbf{Our main contributions}
\begin{itemize}
    \item We propose a spherical ANOVA decomposition for the uniform distribution on the sphere $\S^d$. Due to the structure of the sphere, we have to additionally distinguish between even and odd terms and therefore introduce a parity vector.
    \item \changed{We analyze which properties of the ANOVA decomposition can be transferred from the classical case and which problems arise with the dependencies of the variables. }
    \item We introduce reduced orthogonal bases to describe the lower dimensional terms, which we use to implement high-dimensional function approximation on the sphere. Additionally, we define Sobol indices on the sphere in order to apply sensitivity analysis for the spherical ANOVA decomposition.  
    \item We show in numerical examples that the spherical ANOVA decomposition captures the expected non-zero spherical ANOVA terms, similar to how one would expect the terms from the classical ANOVA decomposition. 
\end{itemize}

However, extending the ANOVA decomposition from a tensor product domain to the sphere leads to theoretical challenges arising from the geometry of the domain. The intrinsic curvature of the sphere automatically induces variable interactions. Even if the function has a simple structure in cartesian coordinates, the manifold's constraints enforce coupling between variables, effectively creating artificial interactions. Second, the natural algebraic dependence of the variables (i.e., $\norm{\x} =1$) implies that the inputs cannot vary independently. Consequently, it is generally impossible to construct a decomposition consisting of strictly orthogonal terms in the classical sense.\par

The analysis of functions on the sphere, the ball and the simplex are related. Here, we focus on the analysis of the sphere, but the key idea can be transferred to the unit ball and the simplex. Due to the different geometry on the unit ball and the simplex, the dependencies between the ANOVA terms must be considered carefully. So, we leave the considerations of our approach on the ball and the simplex open for future work. %

\bigskip 
\noindent
\textbf{Related work}\\
Several alternative approaches for dimension reduction and approximation on the sphere exist. Geometric approaches such as those described in \cite{Chu2005} attempt to approximate the data using optimal (rotated) submanifolds. Global polynomial approximation methods for scattered data typically require a number of samples exponential in $d$, as described in \cite{FiTh08}.

Employing the Dirac measure for high-dimensional approximation leads to the anchored ANOVA decomposition, see for example~\cite{KuSlWaWo09}. Due to the geometric properties on the sphere, it is not possible to to generalize this decomposition to the sphere. \par 

In the context of sensitivity analysis, quantifying the influence of groups of variables remains a challenging task when inputs are correlated. As recently discussed by~\cite{La24}, defining the explicit contribution of subsets of variables requires careful consideration in the presence of dependence.
Recent advances in global sensitivity analysis for dependent variables also include kernel-embedding techniques and Shapley effects,~\cite{Da21} as well as random Fourier features,~\cite{PoWe24}.

\changed{To reduce the specification of the ANOVA decomposition to the choice of the coordinate system, in~\cite{BaMeWaSt24} it was studied how to get a low dimensional classical ANOVA decomposition by using an appropriate basis transform. This transformation prevents interpretation using the original variables, so we are not taking that approach here. }

To date, no dedicated ANOVA decomposition for the sphere has been established in the literature. The approach proposed in this work fills this gap and aims to identify the importance of the original physical variables and their specific interactions.\par
\bigskip 
\noindent
\textbf{Structure of this work}\\
\changed{In the next section we introduce the classical ANOVA decomposition for tensor product domains and highlight the differences in the sphere.} We generalize the projection operator from the classical ANOVA decomposition in Section~\ref{sec:projection}. With this, we introduce an ANOVA operator in Section~\ref{sec:ANOVA_operator}. We show properties of this operator, related to the parity, in Section~\ref{sec:parity}. \par
In Section~\ref{sec:ANOVA_sphere}, we finally introduce the spherical ANOVA decomposition, for which we show orthogonality and redundancy relations in Section~\ref{sec:redundancy}. In Section~\ref{sec:challenge}, we highlight the problems that naturally arise with even functions and show how we can fix them. In Section~\ref{sec:basis}, we introduce the orthogonal basis on the lower dimensional balls, which we use to describe the spherical ANOVA terms. How we proceed numerically is described in more detail in Section~\ref{sec:low_dim} and finally in Section~\ref{sec:numerics} we apply our algorithms to some test functions to calculate the spherical ANOVA decomposition of these functions.

\bigskip 
\noindent
\textbf{Notation}\\
We introduce some notation, which we will use in this paper. We use bold face for vectors and matrices. The parameter $d$ is always the ambient dimension. We denote the sphere and ball of radius $r$ by
\begin{align*}
\mathbb S^d_r&=\left\{ \bm x\in \R^{d+1}: \|\bm x\|_2=r\right\},\\
     \mathbb B^{d+1}_r&=\left\{ \bm x\in \R^{d+1}: \|\bm x\|_2\leq r\right\}.
\end{align*}
As usual, we omit the parameter $r$ for the unit sphere $\S^d$ and unit ball $\B^{d+1}$.
Let the surface measure on $\S^d$ be denoted by $\mu_d$ and we assume it is normalized such that 
$$\int_{\S^d} \d \mu_d(\x) = \frac{2\pi^{\frac{d+1}{2}}}{\Gamma\left(\frac{d+1}{2}\right)} \eqqcolon \omega_d.$$
Furthermore, $\S^0 \coloneqq \{-1,1\}$.  Let $[d+1] = \{1,2,\ldots, d+1\}$. For every index $\bm u\subseteq [d+1]$ we denote the corresponding partial vector of the vector $\bm x$ by $\bm x_{\bm u} = (x_i)_{i\in \bm u}$. For every subset $\bm u\subset[d+1]$ we denote the complement by $\bm u^c\coloneqq [d+1]\backslash \bm u$. We compose a vector from 2 (or more) parts by writing $\bm c = (\bm a_{\bm u},\bm b_{\bm u^c} )$, which means for the vector $\bm c$ that $\bm c_{\bm u} = \bm a_{\bm u}$ and $\bm c_{\bm u^c} = \bm b_{\bm u^c}$. We use the usual notation for $L_2$-spaces and the corresponding scalar products on the domain $\mathbb D \in \{\B^{d+1},\S^{d}\}$,
\begin{align*}
    \norm{f}_{L_2(\mathbb D)} &= \int_{\mathbb D}|f(\x)|^2\dx \x, \quad 
    L_2(\mathbb D) = \{f\colon \mathbb D\rightarrow \R \mid \norm{f}_{L_2(\mathbb D)}<\infty \},\\
    \langle f,g\rangle_{\mathbb D}&=\int_{\mathbb D}f(\x) g(\x)\dx \x.
\end{align*}

\section{\changed{Classical versus spherical ANOVA decomposition}}
The classical ANOVA (analysis of variance) decomposition was introduced by Hoeffding~\cite{Ho48} and has been studied in many contexts, see for example~\cite{EfSt81, JiOw03, Wa90,NW08,KuSlWaWo09} and references therein. Often this model is also called high-dimensional model representation, see also~\cite{RaAl99} for an overview. 
The ANOVA decomposition decomposes a $d$-variate function into $2^d$ ANOVA terms where each term belongs to a subset $\bm u \subseteq [d]$ 
of coordinate indices. Each single term only depends on the variables in the corresponding subset and the number of
these variables is the order of the ANOVA term. The ANOVA decomposition is a valuable tool for understanding the structure and behavior of functions and enables to focus on the most significant factors when constructing approximation models.
It has extensively been used for the analysis of quasi-Monte Carlo methods for integration, see e.g.\ \cite{CaMoOw97, LeOw02, LiOw06, So01, GrHo10} and references therein. Applications to sparse grids can be found in~\cite{BuGr04, Gr06, He03, Holtz11}.\par

Let $\mathbb D^d\in \{\T^d,\R^d,[0,1]^d\}$ be a tensor product domain, for a function $f\colon\mathbb D^d\rightarrow \R$  the \textbf{ANOVA decomposition} is given by 
\begin{equation}\label{eq:anova-decomp}
f(\bm x)=f_{\varnothing}+\sum_{i=1}^d f_{\{i\}}(x_i)+\sum_{i\neq j=1}^d f_{\{i,j\}}(x_i,x_j)+\cdots+f_{[d]}(\bm x)=\sum_{\bm u\subseteq  [d]} f_{\bm u}(\bm x_{\bm u}).
\end{equation}
It is common to refer to the terms $f_{\bm u}$ where  $|\bm u| = q$ collectively as the \textbf{order-$q$ terms}. In general, there are many possibilities for a decomposition~\eqref{eq:anova-decomp}. Indeed, an arbitrary choice of $f_{\bm u}$ for all $|\bm u| < d$ can be accommodated by taking $f_{[d]}$ to be $f$
minus all the other terms. But with some additional conditions the decomposition is unique. 
Typically, one demands orthogonality of the involved terms with respect to \changed{a tensor product density $\mu(\x) = \prod_{i=1}^d \mu_i(x_i)$}. 
Examples for the application of the classical ANOVA decomposition can be found in:

\begin{itemize}
    \item For periodic functions on the torus $\T^d$, trigonometrical polynomials are used as basis in~\cite{PoSc19a}, wavelet functions in~\cite{LiPoUl23} \changed{and Gaussian mixtures in~\cite{HeBaSt22}.}
    \item On the cube $[0,1]$ the half-period cosine basis was studied in~\cite{PoSc19b}.
    \item For the use of random Fourier features for the generalized ANOVA decomposition on $\R^d$ see~\cite{PoWe24}.
\end{itemize}
The classical ANOVA decomposition has the following properties
\begin{itemize}
    \item For the definition of the terms a projection operator is used, which integrates the function over all variables not in $\bm u$ by
    \begin{equation}\tag{\changed{I}} \label{eq:P_u_classic}
       \changed{ P_{\bm u } f = \int_{\mathbb D^{|d\backslash \bm u|}} f(\x) \d\left( \prod_{i\in [d]\backslash \bm u}\mu_{i}(x_i) \right)= \left[\prod_{i\in \bm u} P_i\right] f.}
    \end{equation}
    \item The ANOVA terms are defined iteratively by
    \begin{align}\tag{\changed{II}}\label{eq:moebius_classical}
    \changed{f_{\bm u }(\x_{\bm u}) }&= \changed{[P_{\bm u}f](\x_{\bm u})-\sum_{\bm v \subset \bm u} f_{\bm v}(\x_{\bm v}), \quad f_{\varnothing} = P_{\varnothing}f },\\
    \intertext{or equivalently,}
    f_{\bm u }(\x_{\bm u})& = \sum_{\bm v \subseteq \bm u} (-1)^{|\bm u|-|\bm v|} P_{\bm v}f.\notag
    \end{align}
        \item The ANOVA terms fulfill the integral conditions
    \begin{equation}\tag{\changed{III}}\label{eq:integal_cond_classic}
    \int_{\mathbb D}f_{\bm u }(\x_{\bm u}) \d x_i = 0 \quad \text{ for all } i\in \bm u.
    \end{equation}
    \changed{This condition is necessary for the uniqueness of the decomposition.}
    \item All terms are orthogonal,
    \begin{equation}\label{eq:orth_classic}\tag{\changed{IV}}
    0=\langle f_{\bm u},f_{\bm v}\rangle_{\mu} \quad \text{ for all } \quad \bm u\neq \bm v.
    \end{equation} 
    This is a necessary condition for sensitivity analysis, where the total variance of the function is the sum of the variances of the ANOVA terms.  
\end{itemize}
\begin{figure}[tb]
\begin{tikzpicture}
\centering
\pgfplotsset{
    my view/.style={
        view={315}{25}, 
        width=11cm, height=11cm,
        xmin=-1.5, xmax=1.5,
        ymin=-1.5, ymax=1.5,
        zmin=-1.5, zmax=1.5,
        axis lines=center,
        xlabel style={anchor=north west},
        ylabel style={anchor=south west},
        zlabel style={anchor=south},
        xtick={-1,1}, ytick={-1,1}, ztick={-1,1},
        tick label style={font=\tiny},
        colormap/viridis,
    }
}
\begin{axis}[
    name=cube_axis,
    my view,
    title={\textbf{Cube $[-1,1]^3$}},
    colorbar,
    colorbar style={
    at={(-0,0.9)},          
    height=0.6*\pgfkeysvalueof{/pgfplots/parent axis height}, 
    ytick={0, 0.5, 1},     
}
]

\addplot3[surf, opacity=1, shader=interp, domain=-1:1, y domain=-1:1, point meta={z^2}] ({x}, {-1}, {y});
\addplot3[surf, opacity=1, shader=interp, domain=-1:1, y domain=-1:1, point meta={1}] ({x}, {y}, {1});
\addplot3[surf, opacity=1, shader=interp, domain=-1:1, y domain=-1:1, point meta={z^2}] ({-1}, {x}, {y});

\draw[ ->, black] (axis cs:-2.3, 0, 0) -- (axis cs:2.5, 0, -0);
\draw[ ->,black ] (axis cs: 0, -2.3,0) -- ( axis cs:0, 2.3,0);
\draw[->, black] (axis cs: 0, 0,-1.8) -- ( axis cs:0, 0,1.8);
\draw[ ->, red,thick] (axis cs:-1, -1, -0.5) -- (axis cs:1, -1, -0.5);
\draw[ ->, red,thick] (axis cs:-1, -1, 0.4) -- (axis cs:1, -1, 0.4);
\draw[ ->, red,thick] (axis cs:-1, -1, -0.5) -- (axis cs:-1, 1, -0.5);
\draw[ ->, red,thick] (axis cs:-1, -1, 0.4) -- (axis cs:-1,1, 0.4);

\node[anchor=north west] at (axis cs: 2, 0, 0) {$x_2$};
\node[anchor=south west] at (axis cs: 0, 2, 0) {$x_3$};
\node[anchor=south]      at (axis cs: 0, 0, 1.8) {$x_1$};
\node[anchor=south]      at (axis cs: 2, 2, 5) {$P_1$};

\end{axis}
\begin{axis}[
    at={(cube_axis.outer east)}, anchor=outer west,
    xshift=1.5cm,
    my view,
    title={\textbf{Sphere $\mathbb{S}^2\subset \R^3$}},
]

\addplot3[
    surf,
    opacity=0.3,
    z buffer=sort,
    domain=0:360,
    y domain=-90:90,
    samples=35, samples y=15,
    point meta={(z)^2} 
] ({cos(x)*cos(y)}, {sin(x)*cos(y)}, {sin(y)});
\addplot3 [
    thick, red, ->,
    domain=0:360,
    samples=40
] ({0.98*cos(x)}, {sin(x)*0.98}, {0.2}) ;
\addplot3 [
    thick, red, ->,
    domain=0:360,
    samples=40
] ({0.866*cos(x)}, {sin(x)*0.866}, {0.5}) ;
\node[anchor=north west] at (axis cs: 1.6, 0, 0) {$x_2$};
\node[anchor=south west] at (axis cs: 0, 1.7, 0) {$x_3$};
\node[anchor=south]      at (axis cs: 0, 0, 1.5) {$x_1$};
\end{axis}
\end{tikzpicture}
\caption{\changed{Illustration of the function $f(x_1,x_2,x_3) =x_1^2$ to illustrate the differences between a tensor product domain and the sphere $\S^2$. We illustrate for two fixed $x_1$ the integral for the projection $P_1$ in red. On the cube, this is an integral over the two-dimensional cube. On the sphere, the projection $P_1$ is an integral on a circle.}}
\label{fig:cube_sphere}
\end{figure}

\changed{It is not straight forward to construct an ANOVA decomposition on the sphere. So far, there is no other construction available in the literature.}\\

\changed{In Figure~\ref{fig:cube_sphere} we illustrate the main challenges on the sphere. We plot the function $f(x_1,x_2,x_3) = x_1^2$ on the cube and on the sphere. The integrals for the projections $P_1$ are illustrated in red in Figure~\ref{fig:cube_sphere}. On the cube, $P_1$ is an integral over the two-dimensional cube. On the sphere, the projections $P_1$ are integrals over circles, which depend on the size of the circle and therefore on the variable $x_i$. For that reason we will divide by the length of the circle, denoted by $m_1(x_1)$ in~\eqref{eq:Pu_def} to construct a projection operator on the sphere. On the cube $[-1,1]^3$, the classical ANOVA decomposition with respect to the uniform measure is $f_{\varnothing } = \tfrac 23$, $f_{\{1 \}} =x_1^2 - \tfrac 23$ and all other terms are zero. Our aim is to construct an ANOVA decomposition on the sphere, where in this case also only these two ANOVA terms are nonzero. In Example~\ref{ex:x1} we calculate the decomposition on the sphere for this function.}\\

The aim of this paper is to define a spherical ANOVA decomposition for functions $f\colon \S^d \rightarrow \R$ on the sphere, whereby we want to retain as many properties of the classical decomposition as possible. \changed{In Table~\ref{tab:cla_vs_sphere} we compare the spherical to the classical ANOVA decomposition and their properties.}\\

\begin{table}[htbp]
    \centering
    \begin{tabular}{p{0.19\textwidth}|p{0.37\textwidth}|p{0.37\textwidth}}
         & \textbf{classical ANOVA} & \textbf{spherical ANOVA} \\
         \hline
        ANOVA terms & \vspace*{10pt}\[\text{\eqref{eq:anova-decomp}:} \quad f(\bm x)=\sum_{\bm u\subseteq  [d]} f_{\bm u}(\bm x_{\bm u})\] &
\[ \text{\eqref{eq:anova_xi_sphere}:}\quad f(\bm x)
 = \sum_{\bm u\in \D} \sum_{\bm \xi\in \U_{\bm u}}f_{\bm u,\bm \xi}(\x_{\bm u})\]  additional parity vector $\bm \xi$\\
        \hline
        projection $P_{\bm u}$ &\[\text{\eqref{eq:P_u_classic}:}\quad
        P_{\bm u } f= \left[\prod_{i\in \bm u} P_i\right] f \]
        \[P_{\bm u} P_{\bm v} = P_{\bm u\cap \bm v} \quad \text{ for all } \bm u, \bm v \subseteq [d]
        \]
        integration over tensor product & \[
            \text{\eqref{eq:Pu_def}:}\quad P_{\bm u} f \neq 
\left[\prod_{i\in \bm u} P_i\right] f \]
\[
\text{Lem.~\ref{lem:PvPu}: } P_{\bm v} P_{\bm u} = P_{\bm v} \quad \text{ only for }  \bm v \subseteq \bm u
\]
integration over spheres \\
        \hline
        iterative definition & \vspace*{10pt}\[\text{\eqref{eq:moebius_classical}:}\quad 
    f_{\bm u } = P_{\bm u}f -\sum_{\bm v\subset u} f_{\bm v}\] & 
    Definition~\ref{def:ANOVA_operator}: all even parts in highest order term\newline
    Definition~\ref{def:spherical_anova_final}: includes unnecessary terms\newline
    $\rightarrow$ final Definition~\ref{def:final_def} \\
        \hline
        integral conditions  & \[\text{\eqref{eq:integal_cond_classic}:}\,
    \int_{\mathbb D}f_{\bm u }(\x_{\bm u}) \d x_i = 0 \, \text{ for all } i\in \bm u\] & \vspace*{10pt} more complicated integrals in Lem.~\ref{lem:integral_cond} \\
        \hline
        orthogonality & \[\text{\eqref{eq:orth_classic}:} \quad 
    0=\langle f_{\bm u},f_{\bm v}\rangle_{\mu} \,\text{ for all } \, \bm u\neq \bm v.\] 
    orthogonality between all terms &\[\text{\eqref{eq:orth_xi}:}\quad
\langle f_{\bm u,\bm \xi} ,f_{\tilde{\bm u},\tilde{\bm \xi}}\rangle_{\S^d} = 0 \quad \text{ if } \bm \xi\neq \tilde{\bm \xi}\]
orthogonality only for different parity\\
    \end{tabular}
        \caption{\changed{Properties of the ANOVA decomposition.}}
            \label{tab:cla_vs_sphere}
\end{table}

For \changed{other} generalizations of the classical decomposition to some non-tensor product densities see for example~\cite{Ho07, LiRa12, Ra142, PoWe24}. The needed assumption there is that the domain is grid-closed to maintain at least some tensor product structures. But this assumption is not fulfilled on the sphere. 
For the following reasons we can not apply the classical or generalized ANOVA decomposition directly to the sphere:
 \begin{itemize}
     \item Using the classical decomposition on $\R^{d+1}$ or $[0,1]^{d+1}$ is not even with the generalized ANOVA decomposition possible, since $\S^d$ is not a grid-closed domain in $\R^{d+1}$. Problems arise at the intersection points of the axes with the sphere. 
     \item Using spherical coordinates would lead to the loss of interpretability of the variables, which is a key advantage of the ANOVA decomposition.
     \item For tensor product domains, the domain of the lower dimensional functions $f_{\bm u}$ are the lower-dimensional tensor product domains. For functions on the sphere this is not possible. If $f_{\bm u}\colon \S^{|\bm u|-1}\rightarrow \R$, these functions have to be extended to higher dimensional spheres. We will later define the spherical terms $f_{\bm u}\colon \B^{|\bm u|}\rightarrow \R$, defined on lower dimensional balls, such that they are automatically expanded to the sphere $\S^d$. 
     \item For the classical ANOVA decomposition, one uses an orthonormal basis in $L_2$ and then finds a connection between the basis functions and the ANOVA terms. This works well because of the tensor product structure of the basis in the $d$-dimensional function space $L_2$. In contrast, on the sphere $\S^d$, there is no basis of tensor product form, which can be used for this purpose. 
     \item Defining the projection operator by $P_j(f) = f(x_1,\ldots, x_{j-1},c_j,x_{j+1},\ldots, x_d )$ for some anchor point $\bm c = (c_j)_{j=1}^d$ leads to the anchored ANOVA decomposition, see for example~\cite{KuSlWaWo09}. Due to the geometric properties on the sphere, it is not possible to generalize this anchored decomposition to the sphere. On the unit ball $\B^{d+1}$, the anchored ANOVA decomposition with anchor point $\bm 0$ can be calculated  equally to the tensor product domain. 
     
 \end{itemize}
While proposing a spherical ANOVA decomposition in this paper, we will also highlight the differences and challenges in comparison to the classical ANOVA decomposition.

\section{A spherical ANOVA decomposition and its properties}
Analyzing high-dimensional functions on the sphere poses a significant challenge, as separating local dependencies from global structures is often non-trivial. The ANOVA decomposition provides a systematic framework for this task by hierarchically decomposing the function into main effects and higher-order interactions. This enables us not only to quantify the influence of individual variables but also to understand the underlying coupling structure of the function.\par
Denote $\D \coloneqq \{\bm u \subseteq[d+1] \mid |\bm u|\neq d \}$. For a function $f\colon \S^{d}\rightarrow \R$ we will study a decomposition of the form
\begin{equation}\label{eq:anova_sphere}
f(\bm x) = f_{\varnothing} + \sum_{q=1}^{d-1} \sum_{|\bm u|=q} f_{\bm u}\left(\x_{\bm u}\right) +f_{d+1}(\x) = \sum_{\bm u\in \D} f_{\bm u}(\x_{\bm u}),
\end{equation}
where the component functions $f_{\bm u}\colon \B^{|\bm u|} \rightarrow \R$ are functions on lower-dimensional balls and $f_{d+1}\colon \S^{d}\rightarrow \R$. The domain of the functions $f_{\bm u}$ is the lower-dimensional ball $\B^{|\bm u|}$, but naturally with this decomposition, the terms $f_{\bm u}$ are also defined on $\S^d$.
We do not need terms for the indices $|\bm u|=d$, since due to the normalization $\norm{\x}=1$, the last variable would be specified, except for the sign, from fixing all other variables. We treat all these parts of the function $f$ in the very last term $f_{[d+1]}$. \par

We propose to combine the spherical ANOVA decomposition~\eqref{eq:anova_sphere} with the decomposition into even and odd parts. \changed{In the one-dimensional case, a function $f$ is defined as \textbf{odd} if $f(-x)=-f(x)$ and as \textbf{even} if $f(-x)=f(x)$ almost everywhere. Then, an arbitrary function $f$ is decomposed into its even and odd parts by
\[
f(x) = f_{\mathrm e}(x) + f_{\mathrm o}(x), \qquad
f_{\mathrm e}(x) = \tfrac12(f(x)+f(-x)), \quad
f_{\mathrm o}(x) = \tfrac12(f(x)-f(-x)).
\]
The straightforward multivariate generalization is defined in detail in Section~\ref{sec:even_odd}. Here, $\xi_i=1$ belongs to an odd function in direction $x_i$ and $\xi_i=0$ to an even function in direction $x_i$.} We will always use the index $\bm u$ for the spherical ANOVA terms and the index $\bm\xi$ for the parity of the term.

Since the function $f_{\bm u}$ does not depend on the variables $x_i$ for $i\in \bm u^c$, the term $f_{\bm u,\bm \xi}$ \changed{can only be defined if for the parity vector} $\bm \xi_{\bm u^c} = \bm 0$. For that reason we introduce the index-set 
$$\U_{\bm u} \coloneqq \{\bm \xi \in \{0,1\}^{d+1} \mid \bm \xi_{\bm u^c} = \bm 0 \},$$
where $|\U_{\bm u}| = 2^{|\bm u|}$.
We study in this paper the decomposition of the form
\begin{equation}\label{eq:anova_xi_sphere}
f(\bm x)
= \sum_{\bm u\in \D} \sum_{\bm \xi\in \U_{\bm u}}f_{\bm u,\bm \xi}(\x_{\bm u}).
\end{equation}
In this decomposition we have in total
\begin{equation}\label{eq:number_spherical_terms}
\sum_{q=0}^{d-1} \binom{d+1}{q}2^q +2^{d+1} =\sum_{q=0}^{d+1} \binom{d+1}{q}2^q -2^d\,(d+1)= 3^{d+1} -2^d\,(d+1)
\end{equation}
spherical ANOVA terms.
\begin{example}\label{ex:d=2}
For the case $d=2$, we have to include the $15 = 3^3-2^2\cdot 3$ terms
\begin{align*}
&f_{\varnothing,\bm 0},&\text{order 0}\\
&f_{\{1\},(1,0,0)^\top}, f_{\{1\},(0,0,0)^\top}, f_{\{2\},(0,1,0)^\top}, f_{\{2\},(0,0,0)^\top},f_{\{3\},(0,0,1)^\top}, f_{\{3\},(0,0,0)^\top},&\text{order 1}\\
&f_{\{1,2,3\},(0,0,0)^\top}, f_{\{1,2,3\},(0,0,1)^\top}, f_{\{1,2,3\},(0,1,0)^\top}, f_{\{1,2,3\},(0,1,1)^\top},&\text{order 3}\\
& f_{\{1,2,3\},(1,0,0)^\top}, f_{\{1,2,3\},(1,0,1)^\top},f_{\{1,2,3\},(1,1,0)^\top}, f_{\{1,2,3\},(1,1,1)^\top} &\text{order 3}
\end{align*}
Note that we do not need the two-dimensional terms $f_{\bm u}$ for $|\bm u|=2$. 

\end{example}

In the following subsection we introduce an operator $P_{\bm u}$, which we will use in Section~\ref{sec:ANOVA_sphere} to define the spherical ANOVA terms $f_{\bm u,\bm \xi}$.
\subsection{The projection operator}\label{sec:projection}
For an index $\bm u\in \D\backslash [d+1]$ and $\y\in \B^{|\bm u|}$ denote the \textit{fiber}
\begin{equation}\label{eq:fiber_def}
\{\x\in \S^d \subset \R^{d+1} \colon \x_{\bm u} = \y\}
\end{equation}
and the corresponding integrals $m_{\bm u}\colon \B^{|\bm u|}\rightarrow \R$
\begin{equation*}
m_{\bm u}(\y_{\bm u }) \coloneqq \int_{\S^{|\bm u^c|-1}_{\sqrt{1-\norm{\bm y_{\bm u}}^2}}} 
1 \d \mu_{d-|\bm u|}(\x_{\bm u^c}),
\end{equation*}
where $\mu_{d-|\bm u|}(\x_{\bm u^c})$ is the surface measure of the $d-|\bm u|$-dimensional sphere of radius $\sqrt{1-\norm{\y_{\bm u }}^2} = \norm{\x_{\bm u^c}}$.\par

We study the uniform distribution $\mu_d$ on the sphere. Due to~\eqref{eq:int_sphere_r}, the corresponding conditional distribution on the lower dimensional  balls $\B^{|\bm u|}$, is the distribution
\begin{align*}
m_{\bm v}(\y_{\bm v})&  
=\begin{cases} \displaystyle \changed{\left(1-\norm{\y_{\bm v}}^2\right)^{(d-|\bm v|)/2}} \int_{\S^{d-|\bm v|}} 1 \d \mu_{d-|\bm v|}(\x_{}) &\text{ if }|\bm v|\leq d-1\\
0 &\text{ if }|\bm v|= d, \\
1 &\text{ if }\bm v= [d+1], \\
\end{cases}\\
&=\begin{cases}\omega_{d-|\bm v|} \changed{\left(1-\norm{\y_{\bm v}}^2\right)^{(d-|\bm v|)/2}}   &\text{ if }|\bm v|\leq d-1,\\
0 &\text{ if }|\bm v|= d, \\
1 &\text{ if }\bm v= [d+1], \\
\omega_{d} &\text{ if }\bm v = \varnothing.
\end{cases}
\end{align*}

\begin{definition}[Projection operator]
We define for $\bm u \in \D \backslash [d+1]$ the projection operator $P_{\bm u} \colon L_2(\S^d) \rightarrow L_2(\B^{|\bm u|})$ by
\begin{align}\label{eq:Pu_def}
[P_{\bm u} f] (\y_{\bm u}) 
&\coloneqq \frac{1}{m_{\bm u}(\y_{\bm u })}\int_{ \S_{\sqrt{1-\norm{\y_{\bm u }}^2}}^{|\bm u^c|-1 } } f( \x ) \d \mu_{|\bm u^c|-1}(\x_{\bm u^c})\\
&= \frac{1}{\omega_{d-|\bm u|}}\int_{ \S^{|\bm u^c|-1 } } f\left( \y_{\bm u} , \sqrt{1-\norm{\y_{\bm u }}^2} \x_{\bm u^c} \right) \d \mu_{|\bm u^c|-1}(\x_{\bm u^c}).\notag
\end{align}
\end{definition}
The operators $P_{\bm u}$ calculate the mean of the function $f$ on the fiber~\eqref{eq:fiber_def}. 
Note that for $|\bm u| =  d $ this operator is not defined and for $\bm u = [d+1]$ the operator $P_{[d+1]}$ is the identity operator. For one-dimensional $\bm u=\{i\}$, the operator $P_{\bm u}$ is
\begin{align*}
[P_{\{i\}} f] (y_i) 
&= \changed{\omega^{-1}_{d-1} \left(1-y_i^2\right)^{-(d-1)/2}}\int_{ \S_{\sqrt{1-y_i^2}}^{d-1 } } f( \x ) \d \mu_{d-1}(\x_{[d+1]\backslash i})\\
&=  \frac{1}{\omega_{d-1}}\int_{ \S^{d-1 } } f\left( y_i , \sqrt{1-y_i^2}\, \x_{\bm u^c} \right) \d \mu_{d-1}(\x_{i^c}).
\end{align*}
For $d=2$, the operator $P_{\{1\}}$ is illustrated in Figure~\ref{fig:P_1}.  \par
\tdplotsetmaincoords{70}{10}
\begin{figure}[tb]
\begin{center}
\begin{tikzpicture}[scale=2.5,tdplot_main_coords]
x
    \def\R{1.0}        
    \def\xval{0.6}     
    \def\cradius{sqrt(\R*\R - \xval*\xval)} 

    \draw[->] (-1.2,0,0) -- (1.5,0,0) node[anchor=north east]{$x_1$};
    \draw[->] (0,-1.5,0) -- (0,3,0) node[anchor=north west]{$x_2$};
    \draw[->] (0,0,-1.2) -- (0,0,1.5) node[anchor=south]{$x_3$};

    \shade[ball color=gray!20, opacity=0.4] (0,0) circle (1cm);
    \draw (0,0) circle (1cm);
    
    \draw[thick, red, domain=0:360, samples=100] 
        plot ({\xval}, {\cradius*cos(\x)}, {\cradius*sin(\x)});
    
    \fill[red, opacity=0.1] 
        plot[domain=0:360, samples=50] ({\xval}, {\cradius*cos(\x)}, {\cradius*sin(\x)});

    \coordinate (Proj) at (\xval, 0, 0);

    \fill[black] (Proj) circle (0.5pt);
    \node[anchor=north, yshift=-2pt] at (Proj) {\footnotesize $y_1$};

    \node[red, anchor=south west] at (\xval, {\cradius}, 0) 
        {\footnotesize $\mathbb{S}_{\sqrt{1-y_1^2}}^{1}$};
;

\end{tikzpicture}
\end{center}
\caption{Visualization of $P_{\{1\}}f$ for $d=2$: Integration over the red circle with fixed $x_1 = y_1$.}
\label{fig:P_1}
\end{figure}%
Here we see the difference to the classical ANOVA decomposition: In the tensor product setting the projection operator has also tensor product structure, i.e., $P_{\bm u}$ is a product of one-dimensional $P_i$, see~\eqref{eq:P_u_classic}. For the operator~\eqref{eq:Pu_def} on the sphere, we do not have such a structure, but nevertheless the operator $P_{\bm u}$ has some properties, which we will use to define a spherical ANOVA decomposition. First, the sequential application of the projection operator is the same as the operator applied with the smaller index set, as the following lemma shows.  
\begin{figure}[tb]
\centering
\begin{tikzcd}[
    row sep=large, 
    column sep=large,
    execute at begin picture={
        \pgfdeclarelayer{background}
        \pgfsetlayers{background,main}
    },
    execute at end picture={
        \begin{pgfonlayer}{background}
            \node[fit=(top1)(mid1), draw, rounded corners, inner sep=10pt, fill=gray!20] {};
            \node[fit=(top2)(bot2), draw, rounded corners, inner sep=10pt, fill=gray!20] {};
            \node[fit=(top3)(mid3), draw, rounded corners, inner sep=10pt, fill=gray!20] {};
        \end{pgfonlayer}
    }
]
\text{function }&|[alias=top1]| f  & |[alias=top2]| P_{\bm u}f & |[alias=top3]| P_{\bm v}f \\
\text{function space }&|[alias=mid1]| L_2(\S^d) 
    \arrow[r, "P_{\bm u}"] 
    \arrow[rr, bend right=90,looseness=1.5, "P_{\bm v}"'] 
    & 
|[alias=mid2]| L_2(\B^{|\bm u|}) 
    \arrow[d, rightarrow, left, "\tiny{\bm x_{\bm u} \rightarrow \bm x}"] 
    & 
|[alias=mid3]| L_2(\B^{|\bm v|}) \\
&& 
|[alias=bot2]| L_2(\S^d) 
    \arrow[ru, "P_{\bm v}"'] 
    & 
|[alias=bot3]| \phantom{X}
\end{tikzcd}
\caption{\changed{The functions and function spaces involved in Lemma~\ref{lem:PvPu}.}}
\label{fig:Lemma33}
\end{figure}%
\begin{lemma}\label{lem:PvPu}
The operators $P_{\bm u}$ \changed{and $P_{\bm v}$} fulfill for $\bm v\changed{\subseteq} \bm u$,
\begin{equation*}
P_{\bm v} P_{\bm u}f = P_{\bm v} f.
\end{equation*}
\changed{The connection of the involved functions, operators and spaces is illustrated in Figure~\ref{fig:Lemma33}.}
\end{lemma}%
\begin{proof}
Due to the definition,
\begin{align*}
\left[P_{\bm v} P_{\bm u}f \right] (\y_{\bm u}) 
&=\frac{1}{m_{\bm v}(\y_{\bm v })}  \int_{\S_{\sqrt{1-\norm{\y_{\bm v }}^2}}^{|\bm v^c|-1 }} \frac{1}{m_{\bm u}(\y_{\bm u })}\int_{\S_{\sqrt{1-\norm{\y_{\bm u }}^2}}^{|\bm u^c|-1 }} 
f(\x) \d \mu_{d-|\bm u|}(\x_{\bm u^c}) \d \mu_{d-|\bm v|}(\x_{\bm v^c})\\
 &=
\frac{1}{m_{\bm u}(\y_{\bm u })}\int_{\S_{\sqrt{1-\norm{\y_{\bm u }}^2}}^{|\bm u^c|-1 }} \frac{1}{m_{\bm v}(\y_{\bm v })}  \int_{\S_{\sqrt{1-\norm{\y_{\bm v }}^2}}^{|\bm v^c|-1 }} 
f(\x)\d \mu_{d-|\bm v|}(\x_{\bm v^c}) \d \mu_{d-|\bm u|}(\x_{\bm u^c}) \\
 &=
 \frac{1}{m_{\bm v}(\y_{\bm v })}  \int_{\S_{\sqrt{1-\norm{\y_{\bm v }}^2}}^{|\bm v^c|-1 }} 
f(\x)\d \mu_{d-|\bm v|}(\x_{\bm v^c}) = \left[P_{\bm v} f \right] (\y_{\bm v}) .
\end{align*}
This finishes the proof.
\end{proof}
\changed{Note that in the previous Lemma the operators $P_{\bm u}$ and $P_{\bm v}$ have $L_2(\S^d)$ as domain. That means the natural mapping from $\S^d$ to $\B^{|\bm u|}$, $\bm x\rightarrow \x_{\bm u}$ is necessary to apply the operator $P_{\bm v}$ after $P_{\bm u}$.}\par
Applying the operator $P_{\bm a}$ to a function, which depends only on the variables in $\bm u$ leads to the following lemma. This will be helpful in Section~\ref{sec:ANOVA_operator} to define integral conditions for the ANOVA operator. 
\begin{lemma}\label{lem:Pafu=}
\changed{For $\bm u\in \D\backslash \varnothing$ let $g_{\bm u} \colon \B^{|\bm u|}\rightarrow \R$. Then, for all $\bm a\in \D$ with $\bm a \subset \bm u$, the operator $P_{\bm a}$ applied to the function $g_{\bm u}$ can be written as}
    $$ \left[P_{\bm a} g_{\bm u}\right] (\y_{\bm a})=
         \frac{\omega_{d-|\bm u|} }{\omega_{d-|\bm a|} }\int_{\B^{|\bm u\backslash \bm a|}}  \changed{\left( 1-\norm{\x_{\bm u\backslash \bm a}}^2\right)^{(d-|\bm u|-1)/2}}g_{\bm u}\left(\y_{\bm a}, \sqrt{1-\norm{\y_{\bm a}}^2}\,\x_{\bm u\backslash \bm a} \right)\d  \x_{\bm u\backslash \bm a}. $$
For arbitrary $\bm v\in \D$, denote $\bm a = \bm u \cap \bm v$. Then
$$  [P_{\bm v}g_{\bm u}](\y_{\bm v}) = \frac{\omega_{|(\bm u\cup \bm v)^c| -1}}{\omega_{d-|\bm v|}}   \int_{\B^{|\bm u \backslash \bm a|}}\changed{\left(1-\norm{\x_{\bm u \backslash \bm a}}^2\right)^{|(\bm u\cup \bm v)^c|/2 -1}} g_{\bm u}\left(\y_{\bm a}, \sqrt{1-\norm{\y_{\bm v}}^2}\,\x_{\bm u\backslash \bm a}\right)
 \d \x_{\bm u \backslash \bm a}.$$
\end{lemma}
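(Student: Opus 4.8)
The plan is to unfold the definition~\eqref{eq:Pu_def} of the projection operator and then apply the splitting formula~\eqref{eq:subs_B} to the sphere over which we integrate, exploiting that $g_{\bm u}$ depends only on the coordinates indexed by $\bm u$. I would first establish the second, more general identity, since the first is its specialization to $\bm v = \bm a \subseteq \bm u$: there $\bm u\cup\bm v = \bm u$, so $(\bm u\cup\bm v)^c=\bm u^c$, the exponent becomes $|\bm u^c|-2 = d-|\bm u|-1$, the prefactor becomes $\omega_{d-|\bm u|}/\omega_{d-|\bm a|}$, and $\norm{\y_{\bm v}}=\norm{\y_{\bm a}}$, which is exactly the first formula.

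For the second identity, set $\bm a = \bm u\cap\bm v$ and begin from
$$[P_{\bm v}g_{\bm u}](\y_{\bm v}) = \frac{1}{\omega_{d-|\bm v|}}\int_{\S^{|\bm v^c|-1}}g_{\bm u}\left(\y_{\bm v},\sqrt{1-\norm{\y_{\bm v}}^2}\,\x_{\bm v^c}\right)\d\mu_{|\bm v^c|-1}(\x_{\bm v^c}).$$
The crucial observation is that $g_{\bm u}$ depends only on the coordinates with indices in $\bm u = \bm a\sqcup(\bm u\backslash\bm a)$: the block $\bm a\subseteq\bm v$ is supplied by $\y_{\bm a}$, while the block $\bm u\backslash\bm a = \bm u\backslash\bm v \subseteq\bm v^c$ is supplied by the scaled sphere variable. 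Hence the integrand equals $g_{\bm u}(\y_{\bm a},\sqrt{1-\norm{\y_{\bm v}}^2}\,(\x_{\bm v^c})_{\bm u\backslash\bm a})$ and is constant in the remaining coordinates, using the disjoint partition $\bm v^c = (\bm u\backslash\bm a)\sqcup(\bm u\cup\bm v)^c$ (which follows since $\bm u\cap\bm v^c = \bm u\backslash\bm a$ and $\bm u^c\cap\bm v^c = (\bm u\cup\bm v)^c$).

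Next I would invoke~\eqref{eq:subs_B} with its ball block taken to be $\bm u\backslash\bm a$ and its sphere block taken to be $(\bm u\cup\bm v)^c$, which is legitimate because $|\bm u\backslash\bm a|+|(\bm u\cup\bm v)^c|-1 = |\bm v^c|-1$ by inclusion–exclusion. This rewrites the integral over $\S^{|\bm v^c|-1}$ as an outer integral over $\B^{|\bm u\backslash\bm a|}$ carrying the weight $\sqrt{1-\norm{\x_{\bm u\backslash\bm a}}^2}^{\,|(\bm u\cup\bm v)^c|-2}$, times an inner integral over $\S^{|(\bm u\cup\bm v)^c|-1}$. After the substitution the $\bm u\backslash\bm a$-block of the argument of $g_{\bm u}$ becomes exactly the ball variable $\x_{\bm u\backslash\bm a}$ (unscaled by the internal radial factor), so the integrand no longer depends on the inner sphere variable; the inner integral therefore contributes only the constant $\omega_{|(\bm u\cup\bm v)^c|-1}$. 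Collecting this together with the prefactor $1/\omega_{d-|\bm v|}$ yields the claimed identity.

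I expect the only real difficulty to be the index bookkeeping, in particular keeping the two distinct radial factors straight: the external $\sqrt{1-\norm{\y_{\bm v}}^2}$ coming from the definition of $P_{\bm v}$, which scales the $\bm u\backslash\bm a$ argument and survives, versus the internal $\sqrt{1-\norm{\x_{\bm u\backslash\bm a}}^2}$ produced by~\eqref{eq:subs_B}, which scales only the integrated-out coordinates $(\bm u\cup\bm v)^c$ and hence disappears from $g_{\bm u}$. One then only needs to check that the exponent $|(\bm u\cup\bm v)^c|-2$ and the measure indices line up. A secondary point is that~\eqref{eq:subs_B} is stated for a sphere block of dimension at least two; in the degenerate range $|(\bm u\cup\bm v)^c|\le 1$ the inner sphere is $\S^0$ or empty, so these cases should be verified directly via~\eqref{eq:subs_i} or by noting there is nothing to integrate out, but the bookkeeping is entirely analogous.
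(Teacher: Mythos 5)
Your proof is correct and rests on the same key mechanism as the paper's: split the fiber integral over $\S^{|\bm v^c|-1}$ into an outer ball integral over the coordinates $\bm u\backslash\bm a$ that $g_{\bm u}$ actually sees and an inner sphere integral over $(\bm u\cup\bm v)^c$, which the independence of $g_{\bm u}$ from those coordinates reduces to the constant $\omega_{|(\bm u\cup\bm v)^c|-1}$. Your organization, however, differs in two ways worth noting. The paper proves both identities by separate, parallel computations, each starting from the radius-$\sqrt{1-\norm{\y_{\bm v}}^2}$ form of $P_{\bm v}$ and invoking \eqref{eq:split_integral_r}, which forces a rescaling of variables and a cancellation of powers of $\sqrt{1-\norm{\y_{\bm v}}^2}$ against the normalization $m_{\bm v}(\y_{\bm v})$. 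You instead prove only the general identity --- correctly observing that the first one is its specialization $\bm v=\bm a$ --- and you start from the already-normalized unit-sphere form in \eqref{eq:Pu_def}, so that \eqref{eq:subs_B} applies directly and no radial bookkeeping arises. This buys more than brevity: the paper's own computation slips precisely at the radial cancellation, ending its final displayed line with a spurious factor $\sqrt{1-\norm{\y_{\bm v}}^2}$ (since $|\bm v^c|-1=d-|\bm v|$, the ratio $\sqrt{1-\norm{\y_{\bm v}}^2}^{|\bm v^c|-1}\big/\sqrt{1-\norm{\y_{\bm v}}^2}^{d-|\bm v|}$ equals $1$, not $\sqrt{1-\norm{\y_{\bm v}}^2}$), which contradicts the lemma as stated; your route reproduces the stated formula, and a sanity check with $g_{\bm u}\equiv 1$ confirms that the statement, not the paper's last line, is the correct one. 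One caveat applies to both arguments: the hypotheses do not exclude $\bm u\cup\bm v=[d+1]$, where the right-hand side is meaningless as written ($\omega_{-1}$ and exponent $-2$). You flag the degenerate range and resolve $|(\bm u\cup\bm v)^c|=1$ via $\S^0$ with total mass $\omega_0=2$, which is the right fix; the empty case is a defect of the lemma's formulation itself rather than of either proof.
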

\begin{proof}
We start with $\bm a\subset \bm u$, and use the definition of the operator $P_{\bm a}$ applied to the function $g_{\bm u}$. \changed{For shortening notation denote $r=\sqrt{1-\frac{\norm{\x_{\bm u \backslash \bm a}}^2}{1-\norm{\y_{\bm a}}^2}} =\sqrt{ \frac{1-\norm{\x_{\bm u}}^2}{1-\norm{\y_{\bm a}}^2}}$, then}
\begin{allowdisplaybreaks}
\begin{small}
\begin{align*}
&[P_{\bm a} g_{\bm u}] (\y_{\bm a}) 
= \frac{1}{m_{\bm a}(\y_{\bm a })}\int_{\S^{d-|\bm a|}_{\sqrt{1-\norm{\y_{\bm a}}}}} 
g_{\bm u}(\x_{\bm u}) \d \mu_{d-|\bm a|}(\x_{\bm a^c})\\
&\stackrel{\eqref{eq:split_integral_r}}{=} \frac{1}{m_{\bm a}(\y_{\bm a })} \changed{\left(1-\norm{\y_{\bm a}}^2\right)^{(d-|\bm a|)/2}}\int_{\B^{|\bm u\backslash \bm a|}} \frac{1}{r}g_{\bm u}\left(\x_{\bm u}\right)\int_{\S^{|\bm u^c|-1}_r} 1
 \d \mu_{d-|\bm a|}\left(\frac{\x_{\bm u^c}}{\sqrt{1-\norm{\y_{\bm a}}^2}}\right) \d \frac{ \x_{\bm u\backslash \bm a}}{\sqrt{1-\norm{\y_{\bm a}}^2}}\\
 &\stackrel{ \eqref{eq:int_ball}}{=} \frac{1}{\omega_{d-|\bm a|}} \int_{\B^{|\bm u\backslash \bm a|}} \frac{1}{r}g_{\bm u}\left(\x_{\bm u}\right)\omega_{d-|\bm u|} r^{d-|\bm u|}\d \frac{ \x_{\bm u\backslash \bm a}}{\sqrt{1-\norm{\y_{\bm a}}^2}}\\
 &= \frac{\omega_{d-|\bm u|} }{\omega_{d-|\bm a|}} \int_{\B^{|\bm u\backslash \bm a|}} r^{d-|\bm u|-1}g_{\bm u}\left(\x_{\bm u}\right)\d \frac{ \x_{\bm u\backslash \bm a}}{\sqrt{1-\norm{\y_{\bm a}}^2}}\\
 &= \frac{\omega_{d-|\bm u|} }{\omega_{d-|\bm a|}} \changed{\left(1-\norm{\y_{\bm a}}^2\right)^{(-d+|\bm u|+1)/2}}\int_{\B^{|\bm u\backslash \bm a|}} \changed{\left(1-\norm{\x_{\bm u}}^2\right)^{(d-|\bm u|-1)/2}}g_{\bm u}\left(\x_{\bm u} \right)\d \frac{ \x_{\bm u\backslash \bm a}}{\sqrt{1-\norm{\y_{\bm a}}^2}}\\
    &= \frac{\omega_{d-|\bm u|} }{\omega_{d-|\bm a|}} \changed{\left(1-\norm{\y_{\bm a}}^2\right)^{(-d+|\bm u|+1)/2}}\int_{\B^{|\bm u\backslash \bm a|}}  \changed{\left(\left(1-\norm{\y_{\bm a}}^2\right)\left( 1-\norm{\x_{\bm u\backslash \bm a}}^2\right)\right)^{(d-|\bm u|-1)/2}}g_{\bm u}\left(\y_{\bm a}, \sqrt{1-\norm{\y_{\bm a}}^2}\x_{\bm u\backslash \bm a} \right)\d  \x_{\bm u\backslash \bm a}\\
     &= \frac{\omega_{d-|\bm u|} }{\omega_{d-|\bm a|}} \int_{\B^{|\bm u\backslash \bm a|}}  \changed{\left( 1-\norm{\x_{\bm u\backslash \bm a}}^2\right)^{(d-|\bm u|-1)/2}}g_{\bm u}\left(\y_{\bm a}, \sqrt{1-\norm{\y_{\bm a}}^2}\x_{\bm u\backslash \bm a} \right)\d  \x_{\bm u\backslash \bm a}\\
      &= \frac{\omega_{d-|\bm u|} }{\omega_{d-|\bm a|} } \int_{\B^{|\bm u\backslash \bm a|}}  \changed{\left( 1-\norm{\x_{\bm u\backslash \bm a}}^2\right)^{(d-|\bm u|-1)/2}}g_{\bm u}\left(\y_{\bm a}, \sqrt{1-\norm{\y_{\bm a}}^2}\x_{\bm u\backslash \bm a} \right)\d  \x_{\bm u\backslash \bm a}.
\end{align*}
\end{small}
\end{allowdisplaybreaks}
For arbitrary $\bm v\in \D$, which does not necessarily has to be a subset of $\bm u$, 
let $\bm a = \bm u \cap \bm v$ \changed{and set $r \coloneqq\sqrt{1-\frac{\norm{\x_{\bm u \backslash \bm a}}^2}{1-\norm{\y_{\bm v}}^2}}$, then }
\begin{allowdisplaybreaks}
\begin{small}
\begin{align*}
 &[P_{\bm v}g_{\bm u}](\y_{\bm v}) = \frac{1}{m_{\bm v}(\y_{\bm v})}\,\int_{\S^{|\bm v^c|-1}_{\sqrt{1-\norm{\y_{\bm v }}^2}} } 
g_{\bm u}(\x_{\bm u}) \d \mu_{|\bm v^c|-1}(\x_{\bm v^c})\\
&\quad\stackrel{\eqref{eq:split_integral_r}}{=} 
\frac{\changed{\left(1-\norm{\y_{\bm v}}^2\right)^{(|\bm v^c|-1)/2}}}{m_{\bm v}(\y_{\bm v})}  \int_{\B^{|\bm u \backslash \bm a|}}\frac{1}{r}
\int_{\S^{|(\bm v \cup \bm u)^c|-1}_{r}}  g_{\bm u}\left(\x_{\bm u}\right) \d\mu_{|(\bm u\cup \bm v)^c|-1}\left(\frac{\x_{(\bm u\cup \bm v)^c}}{\sqrt{1-\norm{\y_{\bm v}}^2}}\right) \d \left(\frac{\x_{\bm u \backslash \bm a}}{\sqrt{1-\norm{\y_{\bm v}}^2}}\right)\\
&\quad= 
\frac{\changed{\left(1-\norm{\y_{\bm v}}^2\right)^{(|\bm v^c|-1)/2}}}{m_{\bm v}(\y_{\bm v})}  \int_{\B^{|\bm u \backslash \bm a|}}\frac{g_{\bm u}\left(\x_{\bm u}\right)}{r}
\int_{\S^{|(\bm v \cup \bm u)^c|-1}_{r}}   \d\mu_{|(\bm u\cup \bm v)^c|-1}\left(\frac{\x_{(\bm u\cup \bm v)^c}}{\sqrt{1-\norm{\y_{\bm v}}^2}}\right) \d \left(\frac{\x_{\bm u \backslash \bm a}}{\sqrt{1-\norm{\y_{\bm v}}^2}}\right)\\
&\quad= 
\frac{\omega_{|\bm u\cup \bm v|^c -1}}{m_{\bm v}(\y_{\bm v})}  \changed{\left(1-\norm{\y_{\bm v}}^2 \right)^{(|\bm v^c|-1)/2} }   \int_{\B^{|\bm u \backslash \bm a|}}g_{\bm u}\left(\x_{\bm u}\right)r^{|\bm u\cup \bm v|^c -2} 
 \d \left(\frac{\x_{\bm u \backslash \bm a}}{\sqrt{1-\norm{\y_{\bm v}}^2}}\right)\\
 &\quad= 
\frac{\omega_{|\bm u\cup \bm v|^c -1}}{m_{\bm v}(\y_{\bm v})}  \changed{\left(1-\norm{\y_{\bm v}}^2 \right)^{(|\bm v^c|-1)/2}} \int_{\B^{|\bm u \backslash \bm a|}}\changed{\left(1-\norm{\x_{\bm u \backslash \bm a}}^2\right)^{|\bm u\cup \bm v|^c/2 -1}} g_{\bm u}\left(\y_{\bm a}, \sqrt{1-\norm{\y_{\bm v}}^2}\,\x_{\bm u\backslash \bm a}\right)
 \d \x_{\bm u \backslash \bm a}\\
  &\quad= 
\frac{\omega_{|\bm u\cup \bm v|^c -1}}{\omega_{d-|\bm v|}}  \sqrt{1-\norm{\y_{\bm v}}^2 } \int_{\B^{|\bm u \backslash \bm a|}}\changed{\left(1-\norm{\x_{\bm u \backslash \bm a}}^2\right)^{|\bm u\cup \bm v|^c/2 -1}} g_{\bm u}\left(\y_{\bm a}, \sqrt{1-\norm{\y_{\bm v}}^2}\,\x_{\bm u\backslash \bm a}\right)
 \d \x_{\bm u \backslash \bm a}.
\end{align*}
\end{small}
\end{allowdisplaybreaks}

    This finishes the proof. 
\end{proof}

\subsection{The ANOVA operator and the integral conditions}\label{sec:ANOVA_operator}
Every function $f\in L_2(\S^d)$ can be decomposed arbitrarily in the form~\eqref{eq:anova_sphere}, if we choose arbitrary terms for $f_{\bm u}$ for $|\bm u|\leq d-1$ and simply choose the remainder that is missing from $f$ as the term $f_{[d+1]}(\x)$. For the classical ANOVA decomposition, orthogonality \changed{of all the ANOVA terms~\eqref{eq:orth_classic}} is required to get a unique decomposition. This goes hand in hand with the condition~\eqref{eq:integal_cond_classic} that certain mean values of the ANOVA terms are zero. In the following we propose similar integral conditions also for the spherical ANOVA decomposition.

\begin{definition}\label{def:ANOVA_operator}
For a function $f\in L_2(\S^d)$, we introduce \changed{ the ANOVA operators $\AA_{\bm u}$ iteratively by} 
\begin{align*}
\AA_{\varnothing} f&= P_{\varnothing} f = \frac{1}{\omega_d}\int_{\S^{d}} f(\x) \d \mu_d(\x),\notag\\
[\AA_{\bm u} f](\x_{\bm u}) &=\left[P_{\bm u}f\right](\x_{\bm u}) - \sum_{\bm v\subset \bm u} [\AA_{\bm v} f](\x_{\bm v}) &\text{ if  } 1\leq |\bm u|\leq  d-1,\\
[\AA_{[d+1]} f](\x) &=f(\x) - \sum_{\bm v\in \D\backslash[d+1]} [\AA_{\bm v} f](\x_{\bm v}). & \notag
\end{align*}
In this definition the terms 
$\AA_{\bm u}f\colon \B^{|\bm u|}\rightarrow\R$ depend only on the variables $\x_{\bm u}$.
\end{definition}
This definition is similar to the definition of the classical ANOVA decomposition, see~\eqref{eq:moebius_classical}.
Another formula for the \changed{ANOVA operators $\AA_{\bm u}$}, which follows by induction directly from the iterative definition of the operator $\AA_{\bm u}$ is
\begin{equation}\label{eq:moebius_eq}
\AA_{\bm u} f = \sum_{\bm v\subseteq \bm u}(-1)^{|\bm u|-|\bm v|} P_{\bm v} f.
\end{equation}
Additionally, all terms sum up to the original function, and we obtain the decomposition on the sphere, 
\begin{equation}\label{eq:sum_AA}
f(\x) = \sum_{\bm u\in \D}[\AA_{\bm u} f](\x_{\bm u}).
\end{equation}
A closer look at Definition~\ref{def:ANOVA_operator} and equation~\eqref{eq:sum_AA} reveals that, for the definition to be well-defined, for any function $f$ consisting of a single term, $f=\AA_{\bm u} f $, it must hold that
$$\AA_{\bm a} f = \begin{cases}
    f& \text{ if } \bm u=\bm a,\\
    0 & \text{ otherwise. } 
\end{cases}$$ 
This automatically means by~\eqref{eq:moebius_eq} that for $\bm a\neq \varnothing$ and a function $f$ with $f=\AA_{\bm u} f $,
\begin{equation}\label{eq:P_af_u}
\left[P_{\bm a} \AA_{\bm u}f\right] (\y_{\bm a}) 
=\begin{cases}
 [\AA_{\bm u}f](\y_{\bm u})&\text{ if }\bm a \supseteq \bm u \\
0 &\text{ in all other cases}.\\ 
\end{cases}
    \end{equation}
\changed{This condition is similar to the integral conditions~\eqref{eq:integal_cond_classic} to ensure uniqueness in the classical ANOVA decomposition.} 

The terms defined in Definition~\ref{def:ANOVA_operator} are orthogonal with respect to the uniform measure on the sphere $\S^d$, which the following theorem shows.
\begin{theorem}
For $\bm u,\bm v\in \D$ and $\bm v \neq \bm u$ the terms $\AA_{\bm u}f$ and $\AA_{\bm v}f$, defined in  Definition~\ref{def:ANOVA_operator}, are orthogonal with respect to the surface measure on $\S^d$,
$$\langle \AA_{\bm u}f,\AA_{\bm v}f\rangle_{\S^d} 
= 0.$$
\end{theorem}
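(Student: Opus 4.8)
The plan is to reduce the inner product to expressions in which only a single projection acts on a single ANOVA term, so that one never has to compose two of the operators $P_{\bm a}$ for non-nested indices. The first ingredient is that, regarded as an operator on $L_2(\S^d)$ via the trivial extension $[\widetilde P_{\bm a}f](\x)\coloneqq[P_{\bm a}f](\x_{\bm a})$, each $P_{\bm a}$ is self-adjoint. To see this I would apply the splitting~\eqref{eq:split_integral} with index set $\bm a$ to $\langle \widetilde P_{\bm a}f,g\rangle_{\S^d}$: the factor $[P_{\bm a}f](\x_{\bm a})$ is constant on each inner sphere $\S^{|\bm a^c|-1}_{\sqrt{1-\norm{\x_{\bm a}}^2}}$ and pulls out, while the remaining inner integral equals $m_{\bm a}(\x_{\bm a})\,[P_{\bm a}g](\x_{\bm a})$ by the definition~\eqref{eq:Pu_def}. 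This gives the manifestly symmetric expression
\begin{equation*}
\langle \widetilde P_{\bm a}f,g\rangle_{\S^d}=\int_{\B^{|\bm a|}}\frac{m_{\bm a}(\x_{\bm a})}{\sqrt{1-\norm{\x_{\bm a}}^2}}\,[P_{\bm a}f](\x_{\bm a})\,[P_{\bm a}g](\x_{\bm a})\,\d\x_{\bm a},
\end{equation*}
so that $\langle \widetilde P_{\bm a}f,g\rangle_{\S^d}=\langle f,\widetilde P_{\bm a}g\rangle_{\S^d}$.

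Next, given $\bm u\neq\bm v$ I would choose which factor to expand. Since the index sets are distinct, at least one is not contained in the other, and at least one differs from $[d+1]$; a short verification shows one can always pick $\bm s\in\{\bm u,\bm v\}$ with $\bm s\neq[d+1]$ and with the complementary index $\bm t$ satisfying $\bm t\not\subseteq\bm s$ (the only delicate case is $\bm u=[d+1]$ or $\bm v=[d+1]$, where one is forced to expand the other factor). Because $\bm s\neq[d+1]$ we have $|\bm s|\le d-1$, hence every $\bm b\subseteq\bm s$ lies in $\D\backslash[d+1]$ and the Möbius representation~\eqref{eq:moebius_eq}, $\AA_{\bm s}f=\sum_{\bm b\subseteq\bm s}(-1)^{|\bm s|-|\bm b|}P_{\bm b}f$, is legitimate.

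Plugging this expansion into the inner product and transferring each projection onto the other factor by self-adjointness yields
\begin{equation*}
\langle \AA_{\bm u}f,\AA_{\bm v}f\rangle_{\S^d}=\sum_{\bm b\subseteq\bm s}(-1)^{|\bm s|-|\bm b|}\langle \widetilde P_{\bm b}f,\AA_{\bm t}f\rangle_{\S^d}=\sum_{\bm b\subseteq\bm s}(-1)^{|\bm s|-|\bm b|}\langle f,P_{\bm b}\AA_{\bm t}f\rangle_{\S^d}.
\end{equation*}
Now I invoke the vanishing relation~\eqref{eq:P_af_u} for the single term $\AA_{\bm t}f$: for $\bm b\neq\varnothing$ one has $P_{\bm b}\AA_{\bm t}f=0$ unless $\bm b\supseteq\bm t$, and for $\bm b=\varnothing$ the term $P_{\varnothing}\AA_{\bm t}f$ is the mean of $\AA_{\bm t}f$, which vanishes since $\bm t\neq\varnothing$. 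Because $\bm t\not\subseteq\bm s$, no $\bm b\subseteq\bm s$ can satisfy $\bm b\supseteq\bm t$, so every summand is zero and the inner product vanishes.

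The main obstacle is precisely the point that forces this detour: on the sphere the projections $P_{\bm a}$ do not commute and $P_{\bm a}P_{\bm b}\neq P_{\bm a\cap\bm b}$ for non-nested indices, so the classical lattice-of-projections computation (expand \emph{both} terms by Möbius and collapse products of projections) is unavailable. The workaround is to expand only one factor and move its projections to the other via self-adjointness, so that only the controlled quantity $P_{\bm b}\AA_{\bm t}f$ ever appears. A secondary technicality is the top index $[d+1]$: the Möbius formula~\eqref{eq:moebius_eq} formally involves the undefined operators $P_{\bm b}$ with $|\bm b|=d$ when the expanded index is $[d+1]$, so one must always expand a factor with index $\neq[d+1]$ and check that this is compatible with the requirement $\bm t\not\subseteq\bm s$; establishing the self-adjointness lemma cleanly and organizing this case distinction are where the real work lies.
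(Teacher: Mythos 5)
Your proposal is correct, and it reaches the conclusion through the same two pillars as the paper --- the fiber-integration formula~\eqref{eq:split_integral} and the vanishing relation~\eqref{eq:P_af_u} --- but organizes them differently. The paper never Möbius-expands either factor: it splits the integral with respect to $\bm w=\bm u\cap\bm v$, pulls the factor $\AA_{\bm u}f$ (constant on the inner fibers) out of the innermost integral, and recognizes what remains as a multiple of $P_{\bm u}\AA_{\bm v}f$, which vanishes by~\eqref{eq:P_af_u}. That one-shot computation is exactly your self-adjointness lemma specialized to the case where one argument already depends only on $\x_{\bm s}$; in other words, once you have $\langle \widetilde P_{\bm s}g,h\rangle_{\S^d}=\langle g,\widetilde P_{\bm s}h\rangle_{\S^d}$, you can apply it directly to $g=\AA_{\bm s}f$ (which satisfies $\widetilde P_{\bm s}\AA_{\bm s}f$-type invariance simply because it is a function of $\x_{\bm s}$ alone) and conclude $\langle \AA_{\bm s}f,\AA_{\bm t}f\rangle_{\S^d}=\langle \AA_{\bm s}f,\widetilde P_{\bm s}\AA_{\bm t}f\rangle=0$; the expansion $\AA_{\bm s}f=\sum_{\bm b\subseteq\bm s}(-1)^{|\bm s|-|\bm b|}P_{\bm b}f$ and the term-by-term transfer are therefore a sound but superfluous detour, and they force you to handle the extra boundary case $\bm b=\varnothing$ (mean-zero of $\AA_{\bm t}f$), which you assert rather than prove --- it does follow from the well-definedness condition preceding~\eqref{eq:P_af_u}, or from~\eqref{eq:moebius_eq} together with $P_{\varnothing}P_{\bm b}=P_{\varnothing}$ and the alternating binomial sum, so this is a minor omission at the level of rigor the paper itself adopts. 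What your organization buys, and where it is actually sharper than the paper, is the explicit bookkeeping of which factor may be expanded: your requirement that $\bm t\not\subseteq\bm s$ and $\bm s\neq[d+1]$ pins down precisely the hypothesis needed for the final vanishing step, whereas the paper's ``without loss of generality $\bm u\backslash\bm w\neq\varnothing$'' is stated for the wrong index (the vanishing of $P_{\bm u}\AA_{\bm v}f$ requires $\bm v\not\subseteq\bm u$, i.e.\ $\bm v\backslash\bm w\neq\varnothing$), and the paper is silent about the index $[d+1]$, for which the Möbius formula~\eqref{eq:moebius_eq} would involve undefined projections $P_{\bm b}$ with $|\bm b|=d$.
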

\begin{proof}
Set $\bm w = \bm u \cap \bm v$. If $\bm  w=\varnothing$, we skip the following first step and proceed with the \changed{nested} integral. Otherwise let without loss of generality $\bm u \backslash \bm w \neq  \varnothing$, then
\begin{align*}
    \langle \AA_{\bm u}f,\AA_{\bm v}f\rangle_{\S^d} 
    &= \int_{\S^d } [\AA_{\bm u}f]\left(\x_{\bm u}\right) [\AA_{\bm v}f]\left(\x_{\bm v}\right) \d \mu_{d} (\x)\\
    &\stackrel{\eqref{eq:split_integral}}{=}\int_{\B^{|\bm w|}}\frac{1}{\sqrt{1-\norm{\x_{\bm w}}^2}}\int_{ \S^{|\bm w^c|-1}_{\sqrt{1-\norm{\x_{\bm w}}^2}}}  [\AA_{\bm u}f]\left(\x_{\bm u}\right) [\AA_{\bm v}f]\left(\x_{\bm v}\right) \d\mu_{d-|\bm  w|}(\x_{\bm w^c}) \d \x_{\bm w}.
\end{align*}
For the \changed{nested} integral \changed{we denote $b \coloneqq\sqrt{1-\frac{\norm{\x_{\bm u\backslash \bm w}}^2}{1-\norm{\x_{\bm w}}^2}} = \sqrt{\frac{1-\norm{\x_{\bm u}}^2}{1-\norm{\x_{\bm w}}^2}}$} and receive
\begin{small}
\begin{align*}
&\int_{ \S^{|\bm w^c|-1}_{\sqrt{1-\norm{\x_{\bm w}}^2}}}  [\AA_{\bm u}f]\left(\x_{\bm u}\right) [\AA_{\bm v}f]\left(\x_{\bm v}\right) \d\mu_{d-|\bm  w|}(\x_{\bm w^c}) \\
&\quad\stackrel{\eqref{eq:split_integral_r}}{=} \changed{\left(1-\norm{\x_{\bm w}}^2\right)^{(|\bm w^c|-1)/2}} \int_{\B^{|\bm u\backslash\bm  w|}} \frac{1}{b} \int_{\S^{|\bm u^c|-1}_{b}}[\AA_{\bm u}f]\left(\x_{\bm u}\right) [\AA_{\bm v}f]\left(\x_{\bm v}\right) \d\mu_{|\bm u^c|-1}\left(\frac{\x_{\bm u^c}}{\sqrt{1-\norm{\x_{\bm w}}^2}}\right)\d \frac{\x_{\bm u\backslash \bm w}}{\sqrt{1-\norm{\x_{\bm w}}^2}}\\
&\quad= \changed{\left(1-\norm{\x_{\bm w}}^2\right)^{(|\bm w^c|-1)/2}} \int_{\B^{|\bm u\backslash\bm  w|}} \frac{1}{b} [\AA_{\bm u}f]\left(\x_{\bm u}\right)\int_{\S^{|\bm u^c|-1}_{b}} [\AA_{\bm v}f]\left(\x_{\bm v}\right) \d\mu_{|\bm u^c|-1}\left(\frac{\x_{\bm u^c}}{\sqrt{1-\norm{\x_{\bm w}}^2}}\right)\d \frac{\x_{\bm u\backslash \bm w}}{\sqrt{1-\norm{\x_{\bm w}}^2}}.
\end{align*}
\end{small}
Again, we proceed with the \changed{nested} integral by
\begin{allowdisplaybreaks}
\begin{small}
\begin{align*}
\int_{\S^{|\bm u^c|-1}_{b}} [\AA_{\bm v}f]\left(\x_{\bm v}\right) \d\mu_{|\bm u^c|-1}\left(\frac{\x_{\bm u^c}}{\sqrt{1-\norm{\x_{\bm w}}^2}}\right)
&=\changed{\left(1-\norm{\x_{\bm w}}^2\right)^{(1-|\bm u^c|)/2}}\int_{\S^{|\bm u^c|-1}_{\sqrt{1-\norm{\x_{\bm u }}^2}} } 
[\AA_{\bm v}f](\x_{\bm v}) \d \mu_{|\bm u^c|-1}(\x_{\bm u^c})\\
 &= \changed{\left(1-\norm{\x_{\bm w}}^2\right)^{(1-|\bm u^c|)/2}}\,\frac{1}{m_{\bm u}(\y_{\bm u })}\,[ P_{\bm u} [\AA_{\bm v}f]](\x_{\bm w}) =0.
\end{align*}
\end{small}
\end{allowdisplaybreaks}
These terms are zero due to \eqref{eq:P_af_u}.
\end{proof}

For the classical ANOVA decomposition with tensor product structure, \changed{we have the integral conditions~\eqref{eq:integal_cond_classic}} due to the tensor product structure~\eqref{eq:P_u_classic} of the operator $P_{\bm u}$. On the sphere, these conditions are slightly more complicated since the projection operator $P_{\bm u}$ is not the product of one-dimensional projections. 

\begin{lemma}\label{lem:integral_cond}
The operators $\AA_{\bm u}$ in Definition~\ref{def:ANOVA_operator} fulfill condition~\eqref{eq:P_af_u} if the integral conditions
\begin{align}\label{eq:integral_conditions_d1}
0&= \int_{\B^{|\bm u\backslash \bm a|}}  \changed{\left( 1-\norm{\x_{\bm u\backslash \bm a}}^2\right)^{(d-|\bm u|-1)/2}}[\AA_{\bm u}f]\left(\y_{\bm a}, \sqrt{1-\norm{\y_{\bm a}}^2}\,\x_{\bm u\backslash \bm a} \right)\d  \x_{\bm u\backslash \bm a} \\
&\quad \quad \quad  \text{ for all }   \bm a\subset \bm u, \y_{\bm a}\in \B^{|\bm a|} \text{ \changed{and}}\notag\\
0&= \sqrt{1-\norm{\y_{\bm a}}^2 -r^2}  \int_{\B^{|\bm u \backslash \bm a|}}\changed{\left(1-\norm{\x_{\bm u \backslash \bm a}}^2\right)^{m/2}} [\AA_{\bm u}f]\left(\y_{\bm a}, \sqrt{1-\norm{\y_{\bm a}}^2-r^2}\,\x_{\bm u\backslash \bm a}\right)
 \d \x_{\bm u \backslash \bm a},\label{eq:integral_conditions_d2} \\
 &  \quad \quad \quad  \text{ for all } \bm a\subset \bm u, \y_{\bm a}\in \B^{|\bm a|}, 0\leq r\leq \sqrt{1-\norm{\y_{\bm a}}^2},  m = -1,0,1, \ldots, d-|\bm u|-2 \notag 
\end{align}
are fulfilled.
\end{lemma}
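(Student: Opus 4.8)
The plan is to verify the two branches of condition~\eqref{eq:P_af_u} for the single-term function $g\coloneqq\AA_{\bm u}f$, which by construction depends only on $\x_{\bm u}$, by matching the expressions for $P_{\bm a}g$ produced by Lemma~\ref{lem:Pafu=} against the two integral conditions. I would classify every admissible $\bm a\in\D$, $\bm a\neq\varnothing$, according to its position relative to $\bm u$: either (a) $\bm u\subseteq\bm a$, or (b) $\bm a\subsetneq\bm u$, or (c) $\bm a$ and $\bm u$ are incomparable (including the disjoint case). Branch~(a) is the first line of~\eqref{eq:P_af_u} and needs no integral condition: since $g$ depends only on $\x_{\bm u}$ and $\bm u\subseteq\bm a$, the integrand is constant on the fiber~\eqref{eq:fiber_def} over $\y_{\bm a}$, so the normalized fiber average $P_{\bm a}$ (with its $1/m_{\bm a}$ factor) returns $g(\y_{\bm u})$; the case $\bm a=[d+1]$ is the identity. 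Branches~(b) and~(c) together form the ``all other cases'' line, where $\bm u\not\subseteq\bm a$ and I must show $P_{\bm a}g=0$.

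For branch~(b) I would apply the first identity of Lemma~\ref{lem:Pafu=}, which gives
\[
[P_{\bm a}g](\y_{\bm a}) = \frac{\omega_{d-|\bm u|}}{\omega_{d-|\bm a|}}\int_{\B^{|\bm u\backslash\bm a|}}\sqrt{1-\norm{\x_{\bm u\backslash\bm a}}^2}^{\,d-|\bm u|-1}\,g\left(\y_{\bm a},\sqrt{1-\norm{\y_{\bm a}}^2}\,\x_{\bm u\backslash\bm a}\right)\d\x_{\bm u\backslash\bm a},
\]
i.e.\ a constant multiple of the left-hand side of~\eqref{eq:integral_conditions_d1}, hence $0$. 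For branch~(c) I set $\bm w\coloneqq\bm u\cap\bm a$ and apply the second identity of Lemma~\ref{lem:Pafu=} with $\bm v=\bm a$. The key is the parameter dictionary: the anchor $\bm a$ of~\eqref{eq:integral_conditions_d2} corresponds to $\bm w$, the scalar $r$ to $\norm{\y_{\bm a\backslash\bm w}}$ (so that $\sqrt{1-\norm{\y_{\bm w}}^2-r^2}=\sqrt{1-\norm{\y_{\bm a}}^2}$ reproduces the scaling), and the exponent to $m=|(\bm u\cup\bm a)^c|-2$. With this reading $[P_{\bm a}g](\y_{\bm a})$ is the constant $\omega_{|(\bm u\cup\bm a)^c|-1}/\omega_{d-|\bm a|}$ times the entire right-hand side of~\eqref{eq:integral_conditions_d2}, hence $0$. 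I would then check that the ranges quoted in~\eqref{eq:integral_conditions_d2} are exactly the ones produced here: as $\bm a$ ranges over incomparable sets with fixed intersection $\bm w$, the value $|\bm a\backslash\bm u|\in\{1,\dots,d-|\bm u|\}$ makes $m=d-1-|\bm u|-|\bm a\backslash\bm u|$ sweep $\{-1,\dots,d-|\bm u|-2\}$, while $\norm{\y_{\bm a\backslash\bm w}}$ sweeps $[0,\sqrt{1-\norm{\y_{\bm w}}^2}]$.

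The hard part will be the boundary subcase of~(c) in which $\bm u\cup\bm a=[d+1]$, i.e.\ $m$ would equal $-2$, which lies outside the stated range and where the second identity of Lemma~\ref{lem:Pafu=} degenerates (it passes through $\S^{-1}$, equivalently $\omega_{-1}=0$ multiplying the non-integrable weight $\sqrt{1-\norm{\cdot}^2}^{-2}$). Here $\bm a^c=\bm u\backslash\bm w$, so $P_{\bm a}g$ is genuinely a surface average of $g$ over $\S^{|\bm u\backslash\bm w|-1}_{\sqrt{1-\norm{\y_{\bm a}}^2}}$, which is not obviously zero and must be forced to vanish from the already-available conditions. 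My plan is to recover it from the $m=-1$ instances of~\eqref{eq:integral_conditions_d2} by letting the inner scale $s=\sqrt{1-\norm{\y_{\bm w}}^2-r^2}$ vary over $[0,\sqrt{1-\norm{\y_{\bm w}}^2}]$: rewriting the weighted ball integral in polar coordinates via Lemma~\ref{lem:subs_ball} turns that condition, for each fixed $\y_{\bm w}$, into the vanishing for all outer radii $s$ of an Abel transform of the radial profile $\rho\mapsto\int_{\S^{|\bm u\backslash\bm w|-1}_{\rho}}g(\y_{\bm w},\cdot)\,\d\mu$, after which injectivity of the Abel transform forces every such surface average to be zero. This Abel-inversion step, together with the bookkeeping of the parameter dictionary in branch~(c), is where I expect the real work to lie; the remaining branches are direct substitutions of the two identities from Lemma~\ref{lem:Pafu=}.
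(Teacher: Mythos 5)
Your proof is correct, and its skeleton coincides with the paper's: the paper's entire proof is two sentences stating that $\AA_{\bm u}f$ depends only on $\x_{\bm u}$ and that the claim therefore follows directly from Lemma~\ref{lem:Pafu=}, which is exactly your branches (a)--(c) with the correct parameter dictionary (anchor $\bm w=\bm u\cap\bm a$, scale $r=\norm{\y_{\bm a\backslash\bm w}}$, exponent $m=|(\bm u\cup\bm a)^c|-2$). The genuine difference is your treatment of the boundary subcase $\bm u\cup\bm a=[d+1]$, which the paper's proof silently skips. You are right that this case actually occurs: for $2\le|\bm u|\le d-1$, choose $\bm w\subsetneq\bm u$ with $|\bm w|\le|\bm u|-2$ and set $\bm a=\bm u^c\cup\bm w$; then $|\bm a|\le d-1$, so $\bm a\in\D\backslash[d+1]$, $P_{\bm a}$ is defined, $\bm a\not\supseteq\bm u$, the corresponding exponent would be $m=-2$, outside the range of \eqref{eq:integral_conditions_d2}, and the second identity of Lemma~\ref{lem:Pafu=} degenerates there ($\omega_{-1}=0$ multiplying a divergent integral). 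In this case $P_{\bm a}[\AA_{\bm u}f]$ is a genuine spherical mean of $[\AA_{\bm u}f](\y_{\bm w},\cdot)$ over $\S^{|\bm u\backslash\bm w|-1}_{\sqrt{1-\norm{\y_{\bm a}}^2}}$, and your Abel-inversion argument --- polar coordinates turn the $m=-1$ instance of \eqref{eq:integral_conditions_d2} at anchor $\bm w$ into
\begin{equation*}
\int_0^s \frac{t^{n-1}\,G_{\y_{\bm w}}(t)}{\sqrt{s^2-t^2}}\d t=0,\qquad n=|\bm u\backslash\bm w|,\quad G_{\y_{\bm w}}(t)=\int_{\S^{n-1}}[\AA_{\bm u}f](\y_{\bm w},t\z)\d \mu_{n-1}(\z),
\end{equation*}
for all admissible $s$, whence $G_{\y_{\bm w}}\equiv 0$ almost everywhere --- correctly forces these spherical means to vanish. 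So your proposal is not a detour but is strictly more complete than the paper's argument: the extra work is what makes \eqref{eq:P_af_u} hold for \emph{all} $\bm a\in\D\backslash\varnothing$, not only for those with $\bm u\cup\bm a\neq[d+1]$. A shortcut to the same end, staying inside the paper's toolkit, would be to invoke the theorem of Section~\ref{sec:parity}: the integral conditions force $\AA_{\bm u}f$ to be odd in every direction, and odd functions have vanishing spherical means, which settles the degenerate case without Abel inversion.
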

\begin{proof}
In Lemma~\ref{lem:Pafu=} we applied the projection operator $P_{\bm a}$ to a function, which depends only on the variables $\x_{\bm u}$. The function $\AA_{\bm u} f $ is such a function. Hence, this lemma follows directly from Lemma~\ref{lem:Pafu=}.  
\end{proof}
These conditions are similar to the integral conditions~\eqref{eq:integal_cond_classic} for the classical ANOVA decomposition, where we fix one $\y_{\bm a}$ in $f_{\bm u}(\x_{\bm u})$ and integrate over all other variables. These integrals then vanish for all $\y_{\bm a}$. Shortly, the ANOVA terms have to fulfill~\eqref{eq:P_af_u}. For the classical ANOVA decomposition due to the tensor product structure~\eqref{eq:P_u_classic}, it is enough to demand $P_{\{i\}}f_{\bm u}$ for all $i\in\bm u$. In the spherical case, this is much more complicated and we have to demand the integral conditions in Lemma~\ref{lem:integral_cond}. As we will see in the next subsection, only odd functions $\AA_{\bm u}f$ fulfill these conditions.

\subsection{The parity of the terms \texorpdfstring{$\AA_{\bm u} f$}{} }\label{sec:parity}
In Definition~\ref{def:ANOVA_operator} we defined operators $\AA_{\bm u}$, which split a spherical function into lower dimensional terms. In Lemma~\ref{lem:integral_cond} we showed integral conditions that the terms $\AA_{\bm u}f$ have to fulfill. In fact, only odd functions fulfill these conditions.
\begin{theorem}
 Let a function $g\in L_2(\B^n)$, where $n=|\bm u| \leq d-1$.
 That the function $g$ fulfills the integral conditions \eqref{eq:integral_conditions_d1} and \eqref{eq:integral_conditions_d2} is equivalent to the function $g$ being odd in every direction.
\end{theorem}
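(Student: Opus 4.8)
The plan is to prove the two implications separately; the forward implication (oddness $\Rightarrow$ integral conditions) is a short symmetry argument, while the converse carries essentially all of the content.

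First I would treat the easy direction. Both \eqref{eq:integral_conditions_d1} and \eqref{eq:integral_conditions_d2} integrate $g$ over the full ball $\B^{|\bm u\setminus\bm a|}$ against a weight that depends only on $\norm{\x_{\bm u\setminus\bm a}}$ and is therefore even in each coordinate, while the domain is invariant under every reflection $x_j\mapsto -x_j$. Since the conditions range over proper subsets, $\bm u\setminus\bm a\neq\varnothing$; picking any $j\in\bm u\setminus\bm a$ and using that $g$ is odd in direction $j$, the whole integrand is odd in the integration variable $x_j$, so the integral vanishes for every admissible $\bm a,\y_{\bm a},r,m$. This yields both families of conditions at once.

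For the converse I would fix a direction $i\in\bm u$ and show $g$ is odd in $x_i$; doing this for all $i$ gives oddness in every direction. The key reduction is to specialize to $\bm a=\bm u\setminus\{i\}$, so that $\bm u\setminus\bm a=\{i\}$ and both conditions become integrals over the single variable $x_i$. Fixing $\y_{\bm a}$, writing $c=\sqrt{1-\norm{\y_{\bm a}}^2}$ and $h(t):=g(\y_{\bm a},t)$ for the $i$th slice $t\in[-c,c]$, condition \eqref{eq:integral_conditions_d2} for $r<c$ reduces, after the substitution $t=\sqrt{c^2-r^2}\,x_i$, to the one-parameter family
\[
\int_{-s}^{s}\left(1-\tfrac{t^2}{s^2}\right)^{m/2}h(t)\,\dx t=0,\qquad s\in(0,c],\ m=-1,0,\dots,d-|\bm u|-2,
\]
with $s=\sqrt{c^2-r^2}$. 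Since the weight is even and $[-s,s]$ is symmetric, only the even part $h_{\mathrm e}$ survives, so this is a condition on $h_{\mathrm e}$ alone, and proving $h_{\mathrm e}\equiv0$ is exactly the oddness of $g$ in direction $i$ on the slice $\y_{\bm a}$.

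The remaining, and main, step is to extract $h_{\mathrm e}=0$ from this family, where the argument splits according to the admissible range of $m$. If $|\bm u|\le d-2$ then $m=0$ is available, the condition reads $\int_0^s h_{\mathrm e}(t)\,\dx t=0$ for all $s\in(0,c]$, and differentiating in $s$ (Lebesgue differentiation, as $h_{\mathrm e}\in L_2$) gives $h_{\mathrm e}=0$ almost everywhere. In the extremal case $|\bm u|=d-1$ only $m=-1$ survives, and the condition becomes the Abel-type equation $\int_0^s h_{\mathrm e}(t)(s^2-t^2)^{-1/2}\,\dx t=0$ for all $s$; after the substitution $w=t^2$ this is a classical Abel transform of $h_{\mathrm e}(\sqrt w)/\sqrt w$, whose injectivity forces $h_{\mathrm e}=0$. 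I expect this extremal case to be the main obstacle: one can no longer simply differentiate but must invoke injectivity of a (generalized) Abel/moment transform, and some care with Fubini is needed to pass from the stated integral conditions to an almost-everywhere-in-$\y_{\bm a}$ conclusion and to control the mild singularity of the $m=-1$ weight. Once $h_{\mathrm e}=0$ for almost every $\y_{\bm a}$ and every $i\in\bm u$, the function $g$ is odd in every direction, which completes the equivalence.
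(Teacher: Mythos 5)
Your proof is correct, and in the generic case it follows essentially the same route as the paper: both arguments specialize the conditions to $\bm a = \bm u\setminus\{i\}$, so that \eqref{eq:integral_conditions_d2} becomes a one-parameter family of integrals of the slice $h(t)=g(\y_{\bm a},t)$ over symmetric intervals, observe that only the even part $h_{\mathrm e}$ survives, and conclude $h_{\mathrm e}=0$. Where the paper finishes the $m=0$ case by noting that orthogonality to all symmetric indicators $\mathbf{1}_{[-r,r]}(x_i)$ extends, by density of step functions, to orthogonality to all functions even in $x_i$, you instead differentiate $s\mapsto\int_0^s h_{\mathrm e}(t)\,\dx t$; these are equivalent, with yours slightly more direct. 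The genuine difference is your treatment of the extremal case $|\bm u|=d-1$: the paper invokes \eqref{eq:integral_conditions_d2} with $m=0$ unconditionally, but the admissible range $m=-1,0,\ldots,d-|\bm u|-2$ collapses to $\{-1\}$ when $|\bm u|=d-1$, a case the theorem's hypothesis $n\le d-1$ explicitly allows — so the paper's own proof has a gap there. Your reduction of the $m=-1$ family to the Abel-type equation $\int_0^s h_{\mathrm e}(t)\,(s^2-t^2)^{-1/2}\,\dx t=0$ for all $s$, followed by injectivity of the Abel transform (which applies since $h_{\mathrm e}(\sqrt{w})/\sqrt{w}$ is integrable by Cauchy--Schwarz), closes exactly this gap, so your proposal is in fact strictly more complete than the paper's argument. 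The only remaining technicalities are the ones you already flag — the almost-everywhere-in-$\y_{\bm a}$ interpretation of conditions stated ``for all $\y_{\bm a}$'' for $L_2$ functions, and absolute convergence of the singular $m=-1$ integrals — and both are inherent in the statement of the conditions rather than defects of your argument.
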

\begin{proof}
If the function $g$ is odd, it easily follows that \eqref{eq:integral_conditions_d1} and \eqref{eq:integral_conditions_d2} hold true, so one direction follows directly.\par
For the other direction, let $g\in L_2(\B^n)$ be arbitrary and fulfill the integral conditions \eqref{eq:integral_conditions_d1} and \eqref{eq:integral_conditions_d2}. 
Then, decompose $g$ into its even and odd parts $g_{\bm \xi}$ like in~\eqref{eq:def_even_odd_d}.\par
Let us fix $i\in \{1,\ldots, n\}$. Then, \eqref{eq:integral_conditions_d2} for $\y_{[n]\backslash i}$ and $m=0$ states that 
$$0= \sqrt{1-\norm{\y_{[n]\backslash i}}^2 -r^2}  \int_{-1}^1g\left(\y_{[n]\backslash i}, \sqrt{1-\norm{\y_{[n]\backslash i}}^2-r^2}\,x_i\right)
 \d x_i$$
 for all $0\leq r\leq \sqrt{1-\norm{\y_{[n]\backslash i}}^2}$.
Using the decomposition~\eqref{eq:def_even_odd_d} of $g$ into odd and even parts, in this integral of the terms $g_{\bm \xi}$ with $\xi_i = 1$ are zero, such that we have
\begin{align*}
    0&= \sqrt{1-\norm{\y_{[n]\backslash i}}^2 -r^2}  \int_{-1}^1 \sum_{\stackrel{\bm \xi\in \{0,1\}^{n}}{\xi_i=0}}g_{\bm \xi}\left(\y_{[n]\backslash i}, \sqrt{1-\norm{\y_{[n]\backslash i}}^2-r^2}\,x_i\right)
 \d x_i\\
 &= \int_{-\sqrt{1-\norm{\y_{[n]\backslash i}}^2-r^2}}^{\sqrt{1-\norm{\y_{[n]\backslash i}}^2-r^2}} \sum_{\stackrel{\bm \xi\in \{0,1\}^{n}}{\xi_i=0}}g_{\bm \xi}\left(\y_{[n]\backslash i}, x_i\right)
 \d x_i.
\end{align*}
That means, that the function $G\coloneqq\sum_{\stackrel{\bm \xi\in \{0,1\}^{n}}{\xi_i=0}}g_{\bm \xi}$ is orthogonal to all step functions of the form $\mathbf{1}_{[-r,r]}(x_i)$. The symmetric indicator functions of arbitrary step functions are a linear combination of the indicator functions of the form $\mathbf{1}_{[-r,r]}(t)$. Furthermore, it follows that $G$ is also orthogonal to all step functions on $\B^n$, which are supported on a tensor-product domain and are even in $x_i$ direction. Since step functions are dense in $L_2(\B^n)$, it follows that
\[
\langle G, h\rangle_{L_2(\B^n)} = 0 \quad \text{for all even } h\in L_2(\B^n) \text{ with } h(\x) = h(\x_{[n]\backslash i}, x_i) \text{\,a.e.}
\]
Hence $G$ must vanish almost everywhere. Inserting the definitions of the functions $g_{\bm \xi}$ from~\eqref{eq:def_even_odd_d} into $G$ gives then 
\begin{allowdisplaybreaks}
\begin{align*}
    0&= \sum_{\stackrel{\bm \xi\in \{0,1\}^{n}}{\xi_i=0}}g_{\bm \xi}(\x) =\sum_{\stackrel{\bm \xi\in \{0,1\}^{n}}{\xi_i=0}} \tfrac{1}{2^{n}} \sum_{\bm k \in \{-1,1\}^{n}} \left(\prod_{\ell=1}^{n}k_\ell^{\xi_\ell}\right) g(\bm k \odot \x )\\
     &= \tfrac{1}{2^{n}}\sum_{\stackrel{\bm \xi\in \{0,1\}^{n}}{\xi_i=0}}  \sum_{\bm k \in \{-1,1\}^{n}} \left(\prod_{\ell=1}^{n}k_\ell^{\xi_\ell}\right) g(\bm k \odot \x )\\
      &= \tfrac{1}{2^{n}}   \sum_{\bm k \in \{-1,1\}^{n}} g(\bm k \odot \x )\sum_{\stackrel{\bm \xi\in \{0,1\}^{n}}{\xi_i=0}}\left(\prod_{\ell=1}^{n}k_\ell^{\xi_\ell}\right)\\
         &= \tfrac{1}{2^{n}}   \sum_{\bm k \in \{-1,1\}^{n}} g(\bm k \odot \x )\sum_{\bm \xi_{[n]\backslash i}\in \{0,1\}^{n-1}}\left(\prod_{\ell\neq i}^{}k_\ell^{\xi_\ell}\right) \\
      &= \tfrac{1}{2^{n}} \left(  \underbrace{2^{n}g(\x)}_{k=\bm 1}  +  \underbrace{2^{n}g(\x_{[n]\backslash i}, -x_i)}_{k=\bm 1 - 2\bm e_i} \right),
\end{align*}
\end{allowdisplaybreaks}%
since all the other terms cancel out. It follows directly that 
 $g (\x) = -g(\x_{[n]\backslash i}, -x_i) $.
Similarly, this is true for all other $i\in \{1,\ldots, n\}$, such that $g$ is odd in all directions.
\end{proof}

\subsection{The definition of the spherical ANOVA decomposition}\label{sec:ANOVA_sphere}
While the decomposition into even and odd parts is straightforward in Euclidean space, the algebraic constraint of the sphere introduces specific dependencies between the variables. Here we discuss how multivariate parity impacts the linear independence of the basis terms and identify which symmetry patterns lead to redundant representations.\par
The ANOVA operators $\AA_{\bm u}$ from Definition~\ref{def:ANOVA_operator} produce only odd terms. This means, in the spherical ANOVA decomposition~\eqref{eq:anova_xi_sphere} only terms of the form $f_{\bm u,\bm \xi}$ with $\bm \xi = (\bm 1_{\bm u}, \bm 0_{\bm u^c})$ are produced by applying $\AA_{\bm u}$ to the function $f$ itself and all even parts of a function $f$ are contained in the last term $\AA_{[d+1]}f$.\par

To include the even terms also in the lower-dimensional terms of the spherical ANOVA decomposition, we propose to define a spherical ANOVA decomposition of the form~\eqref{eq:anova_xi_sphere} by the following.
\begin{definition}\label{def:spherical_anova_final}
For a function $f\in L_2(\S^d)$ we use the decomposition into even and odd parts as defined in~\eqref{eq:def_even_odd_d} and the projection operator defined in~\eqref{eq:Pu_def}. With these ingredients we introduce the spherical ANOVA decomposition of the form~\eqref{eq:anova_xi_sphere} by 
\begin{align*}
\intertext{\textbf{1. The constant term:}}
f_{\varnothing, \bm 0} &\coloneqq P_{\varnothing} f = \frac{1}{\omega_d}\int_{\S^{d}} f(\x) \d \mu_d(\x).\\
\intertext{\textbf{2. The mixed terms:}
For all $\bm v\in \D\backslash \varnothing$ and $\bm u \supseteq \bm v$ set $\bm \xi=(\bm 1_{\bm v},  \bm 0_{\bm v^c})\in \{0,1\}^{d+1}$. If $d+1\notin \bm v$, then $d+1\notin \bm u$ and let }
 f_{\bm u,(\bm 1_{\bm v},\bm 0_{\bm v^c})}(\x_{\bm u})&\coloneqq P_{\bm u}\left[\Xi_{(\bm 1_{\bm v},\bm 0_{\bm v^c})}f(\x_{\bm u})\right] - \sum_{\bm v\subseteq \bm a\subset \bm u} f_{\bm a,(\bm 1_{\bm v},\bm 0_{\bm v^c})}(\x_{\bm a}) &\text{ if  } |\bm v|\leq |\bm u|\leq  d-1,\\
f_{[d+1],(\bm 1_{\bm v},\bm 0_{\bm v^c})}(\x) &=\left[\Xi_{(\bm 1_{\bm v},\bm 0_{\bm v^c})}f\right](\x) - \sum_{\bm v\subseteq\bm a\in \D\backslash[d+1]} f_{\bm a,(\bm 1_{\bm v},\bm 0_{\bm v^c})} (\x_{\bm a}). \\
\intertext{\textbf{3. The pure even terms:} For all $\bm u \subset [d]\backslash  \varnothing$ let  }
     f_{\bm u, \bm 0}(\x_{\bm u}) &\coloneqq P_{\bm u}\left[\Xi_{\bm 0}f\right](\x_{\bm u})  - \sum_{\bm a \subsetneq \bm u} f_{\bm a, \bm 0}(\x_{\bm a}).
\end{align*}
\end{definition}
We will discuss in Section~\ref{sec:redundancy} why we do not include the terms where $d+1\in \bm u$ with $\xi_{d+1}=0$. 
Our definition of spherical ANOVA terms is of the form~\eqref{eq:anova_xi_sphere}, all the terms $f_{\bm u,\bm \xi}$ have parity $\bm \xi$ and 
$$ \Xi_{\bm \xi} f (\x) =  \sum_{\stackrel{\bm a\in \D}{\supp \bm \xi \subseteq \bm a}} f_{\bm a,\bm \xi}(\x_{\bm a}).$$
In Example~\ref{ex:d=2} we considered the case $d=2$. A visualization of all the possible indices $\bm u$ and corresponding $\bm \xi$ for the terms $f_{\bm u,\bm\xi}$ for $d=3$ can be found in Figure~\ref{fig:diagram}. There we show the totality of all terms $f_{\bm u,\bm \xi}$. Later, in Section~\ref{sec:redundancy} we will explain which of these terms we have to omit due to redundancy on the sphere. The terms in Definition~\ref{def:spherical_anova_final} can be calculated more easily by the following.
\begin{lemma}
The Definition~\ref{def:spherical_anova_final} is equivalent to this definition of spherical ANOVA terms,
\begin{align}
\intertext{\textbf{1. The constant term:}}
f_{\varnothing, \bm 0} &=P_{\varnothing} \Xi_{\bm 0} f = \frac{1}{\omega_d}\int_{\S^{d}} f(\x) \d \mu_d(\x).\notag\\
\intertext{\textbf{2. The mixed terms:} For all $\varnothing\neq\bm v\subseteq [d+1]$ and $\bm u \supseteq \bm v$ set $\bm \xi=(\bm 1_{\bm v}, \bm 0_{\bm v^c})\in \{0,1\}^{d+1}$. If $d+1\notin \bm v$, then $d+1\notin \bm u$ and let }
f_{\bm u,(\bm 1_{\bm v},\bm 0_{\bm v^c})}(\x_{\bm u})&\coloneqq \sum_{\bm v\subseteq \bm a\subseteq \bm u} (-1)^{|\bm u|-|\bm a|}P_{\bm a} [\Xi_{(\bm 1_{\bm v},\bm 0_{\bm v^c})} f ].  \label{eq:f_u_xi_easy}\\
\intertext{\textbf{3. The pure even terms:}
 For all $\bm u \subset [d]\backslash  \varnothing$ let  }
     f_{\bm u, \bm 0}(\x_{\bm u}) &= \sum_{\bm a\subseteq \bm u}(-1)^{|\bm u|-|\bm a|}P_{\bm u}\left[\Xi_{\bm 0}f\right](\x_{\bm u}) . \notag
\end{align}
\end{lemma}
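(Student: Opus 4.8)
The plan is to show that the iterative (recursive) definitions in Definition~\ref{def:spherical_anova_final} are equivalent to the closed-form Möbius-type expressions claimed in the lemma, by induction on $|\bm u|$. This is exactly the same bookkeeping as the classical passage from~\eqref{eq:iterative_def} to~\eqref{eq:moebius_eq}, only now carried out separately within each parity class $\bm\xi=(\bm 1_{\bm v},\bm 0_{\bm v^c})$. The key structural observation is that, once we apply $\Xi_{\bm\xi}$ to $f$, the recursions in Definition~\ref{def:spherical_anova_final} decouple completely across distinct parities: the mixed term $f_{\bm u,\bm\xi}$ is built only from $P_{\bm a}[\Xi_{\bm\xi}f]$ and from lower terms $f_{\bm a,\bm\xi}$ sharing the \emph{same} $\bm\xi$, and likewise for the pure even terms with $\bm\xi=\bm0$. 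Hence it suffices to verify the Möbius inversion on a single fixed parity class, i.e.\ on the lattice $\{\bm a : \bm v\subseteq \bm a\subseteq \bm u\}$ (an interval in the Boolean lattice) for the mixed case, and on the full subset lattice $\{\bm a\subseteq\bm u\}$ for the pure even case.

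First I would fix a parity $\bm\xi=(\bm 1_{\bm v},\bm 0_{\bm v^c})$ and abbreviate $g\coloneqq\Xi_{\bm\xi}f$, so that the defining recursion reads $f_{\bm u,\bm\xi}=P_{\bm u}g-\sum_{\bm v\subseteq \bm a\subsetneq \bm u} f_{\bm a,\bm\xi}$. I would then proceed by induction on $|\bm u|$ starting from the base case $\bm u=\bm v$, where the sum is empty and $f_{\bm v,\bm\xi}=P_{\bm v}g$ matches the claimed formula~\eqref{eq:f_u_xi_easy} (whose only surviving term is $\bm a=\bm v$). For the inductive step I would substitute the inductive hypothesis $f_{\bm a,\bm\xi}=\sum_{\bm v\subseteq \bm b\subseteq \bm a}(-1)^{|\bm a|-|\bm b|}P_{\bm b}g$ into the recursion and collect the coefficient of each $P_{\bm b}g$ with $\bm v\subseteq\bm b\subseteq\bm u$. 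The coefficient of $P_{\bm u}g$ is $1$, and for $\bm b\subsetneq\bm u$ the net coefficient is
\begin{equation*}
-\sum_{\bm b\subseteq \bm a\subsetneq \bm u}(-1)^{|\bm a|-|\bm b|}
= -\Bigl[\sum_{\bm b\subseteq \bm a\subseteq \bm u}(-1)^{|\bm a|-|\bm b|} - (-1)^{|\bm u|-|\bm b|}\Bigr]
= (-1)^{|\bm u|-|\bm b|},
\end{equation*}
where the bracketed alternating sum over the nonempty interval $[\bm b,\bm u]$ vanishes by the standard identity $\sum_{k}\binom{n}{k}(-1)^k=0$ for $n=|\bm u|-|\bm b|\geq1$. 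This reproduces exactly the coefficient in~\eqref{eq:f_u_xi_easy} and closes the induction. The pure even case $\bm\xi=\bm0$, $\bm v=\varnothing$ is identical with the interval $[\varnothing,\bm u]$ replaced by the full power set of $\bm u$.

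The main subtlety is not the algebra but making precise that the recursions genuinely separate by parity, so that the single-parity Möbius argument is legitimate. For the mixed terms one must check that the summation range $\bm v\subseteq \bm a\subset \bm u$ appearing in Definition~\ref{def:spherical_anova_final} is an interval in the Boolean lattice anchored at $\supp\bm\xi=\bm v$, which is exactly why only one alternating sum (not a sum over all of $\bm u$) arises; I would note that because $\Xi_{\bm\xi}f$ already has parity $\bm\xi$, any term $f_{\bm a,\bm\xi}$ with $\bm a\not\supseteq\bm v$ would violate the support constraint $\bm\xi_{\bm a^c}=\bm0$ and is therefore absent by construction. I would also remark on the boundary level $|\bm u|=d-1$ versus the top term $f_{[d+1],\bm\xi}$: since $\D$ skips the cardinality $|\bm u|=d$, the final recursion in Definition~\ref{def:spherical_anova_final} sums over $\bm a\in\D\backslash[d+1]$ rather than over all proper subsets, but the same telescoping of coefficients applies verbatim once one restricts attention to the fixed-parity sublattice, so no separate treatment of the top term is needed beyond bookkeeping the index set $\D$.
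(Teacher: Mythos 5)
Your core argument is correct and is essentially the paper's own proof: induction on $|\bm u|$ anchored at the base case $\bm u=\bm v$, substitution of the inductive hypothesis into the recursion of Definition~\ref{def:spherical_anova_final}, and collection of the coefficient of each $P_{\bm b}[\Xi_{\bm\xi}f]$ via the vanishing alternating sum of binomial coefficients. Your preliminary observation that the recursion decouples across parity classes, so that the inversion can be carried out on the sublattice $\{\bm a\colon \bm v\subseteq\bm a\subseteq\bm u\}$, is also correct and is implicit in the paper.

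Your closing remark about the top term, however, is wrong and should be removed. You claim that ``the same telescoping of coefficients applies verbatim'' to $f_{[d+1],\bm\xi}$, with only bookkeeping of the index set $\D$ required. It does not, for two reasons. First, formula~\eqref{eq:f_u_xi_easy} cannot even be stated for $\bm u=[d+1]$: it would involve $P_{\bm a}$ with $|\bm a|=d$, and the projection operator is undefined at that cardinality (this is precisely why $\D$ excludes $|\bm u|=d$). Second, the recursion for $f_{[d+1],\bm\xi}$ sums over $\bm v\subseteq\bm a$ with $\bm a\in\D\backslash[d+1]$, i.e.\ over $|\bm a|\leq d-1$; since the entire level $|\bm a|=d$ is missing, this index set is \emph{not} an interval of the Boolean lattice, and the alternating sum no longer collapses to $\pm1$. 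Concretely, substituting the already-proven M\"obius formula for the lower-order terms shows that $P_{\bm b}[\Xi_{\bm\xi}f]$ enters $f_{[d+1],\bm\xi}$ with coefficient
\begin{equation*}
-\sum_{k=0}^{d-1-|\bm b|}\binom{d+1-|\bm b|}{k}(-1)^k
=(-1)^{d-|\bm b|}\bigl(d-|\bm b|\bigr),
\end{equation*}
which is not of the form $(-1)^{|\bm u|-|\bm b|}$ in general (e.g.\ it equals $2$ when $|\bm b|=d-2$). Hence no formula of type~\eqref{eq:f_u_xi_easy} holds for the top term; the correct scope of the lemma, of your induction, and of the paper's proof is the mixed and pure even terms with $|\bm u|\leq d-1$, while $f_{[d+1],\bm\xi}$ must remain defined by its own recursion.
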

\begin{proof}
This proof is similar to the proof of~\eqref{eq:moebius_classical} for the classical ANOVA decomposition using induction:
For $\bm u = \bm v$ the sum in~\eqref{eq:f_u_xi_easy} is just one summand, namely $P_{\bm v} [\Xi_{(\bm 1_{\bm v},\bm 0_{\bm v^c})} f ]$. For the induction step let~\eqref{eq:f_u_xi_easy} be fulfilled for all $\bm u\supseteq \bm v$ with $|\bm u|\leq k$ and choose $\bm u\supseteq \bm v$ with $|\bm u|=k+1$. Then, due to the alternating sum of binomial coefficients, 
\begin{align*}
f_{\bm u,(\bm 1_{\bm v},\bm 0_{\bm v^c})}(\x_{\bm u})& = P_{\bm u}\left[\Xi_{(\bm 1_{\bm v},\bm 0_{\bm v^c})}f(\x_{\bm u})\right] - \sum_{\bm v\subseteq \bm a\subset \bm u} f_{\bm a,(\bm 1_{\bm v},\bm 0_{\bm v^c})}(\x_{\bm a})\\
&=P_{\bm u}\left[\Xi_{(\bm 1_{\bm v},\bm 0_{\bm v^c})}f(\x_{\bm u})\right]  - \sum_{\bm v\subseteq \bm a\subset \bm u} \sum_{\bm v\subseteq \bm b\subseteq \bm a} (-1)^{|\bm a|-|\bm b|}P_{\bm b} [\Xi_{(\bm 1_{\bm v},\bm 0_{\bm v^c})} f ]\\
&=P_{\bm u}\left[\Xi_{(\bm 1_{\bm v},\bm 0_{\bm v^c})}f(\x_{\bm u})\right]  - \sum_{\bm v\subseteq \bm b\subseteq \bm u} \sum_{k=0}^{|\bm u\backslash \bm b|-1}\binom{|\bm u\backslash\bm b|}{k}  (-1)^{k}P_{\bm b} [\Xi_{(\bm 1_{\bm v},\bm 0_{\bm v^c})} f ]\\
&=P_{\bm u}\left[\Xi_{(\bm 1_{\bm v},\bm 0_{\bm v^c})}f(\x_{\bm u})\right]  + \sum_{\bm v\subseteq \bm b\subseteq \bm u}  (-1)^{|\bm u|-|\bm b|}P_{\bm b} [\Xi_{(\bm 1_{\bm v},\bm 0_{\bm v^c})} f ]\\
&=\sum_{\bm v\subseteq \bm b\subseteq \bm u} (-1)^{|\bm u|-|\bm b|}P_{\bm b} [\Xi_{(\bm 1_{\bm v},\bm 0_{\bm v^c})} f ].
\end{align*}
For the pure even case, the proof is the same with $\bm v = \varnothing$.
\end{proof}

In our spherical ANOVA decomposition we have separated the parities and established the hierarchy. Our proposed definition of a spherical ANOVA decomposition has similarities to the classical ANOVA decomposition, whereby we cannot transfer all properties of the classical ANOVA decomposition to the sphere. See Section~\ref{sec:numerics} for example functions and their decomposition into spherical ANOVA terms. \par

In our decomposition we have some orthogonality for the spherical ANOVA terms, which we will show in the following section.

\subsection{Orthogonality and Redundancy}\label{sec:redundancy}
The decomposition into parity parts belonging to the parity vectors $\bm\xi\in \{0,1\}^{d+1}$, see also~\eqref{eq:def_even_odd_d}, is an orthogonal decomposition, which also means that for the spherical ANOVA terms defined in Definition~\ref{def:spherical_anova_final} we have the orthogonality for all $\bm u,\tilde{\bm u}$ with $\bm u\neq\tilde{\bm u}$,
\begin{equation}\label{eq:orth_xi}
\langle f_{\bm u,\bm \xi} ,f_{\tilde{\bm u},\tilde{\bm \xi}}\rangle_{\S^d} = 0 \quad \text{ if } \bm \xi\neq \tilde{\bm \xi}.
\end{equation}
Unfortunately, we do not have orthogonality for spherical ANOVA terms with the same parity vector $\bm \xi$.
Even worse, the equation $\sum_{i=1}^{d+1}x_i^2 = 1$ on the sphere naturally leads to redundancy of the even terms. Every even function $h(x_{d+1})$ can also be written as a $\tilde h(x_{d+1}^2) = \tilde h (1-x_1^2-\cdots - x_{d}^2)$, which is a function depending on the variables $x_1$ to $x_d$. Additionally, $\tilde h(x_1,\ldots,x_d)$ is even in all directions. \par
For that reason we propose to omit some terms in the Definition~\ref{def:spherical_anova_final}. Namely by the substitution 
$$x_{d+1}^2 = 1- \sum_{i=1}^d x_i^2 $$
we propose in Definition~\ref{def:spherical_anova_final} to omit all terms, which depend on $x_{d+1}$ and where the dependence is even, i.e.,
$$f_{\bm u,\bm \xi}= 0 \quad\text{ if } d+1\in \bm u \text{ and } \xi_{d+1} = 0.$$
Instead of $x_{d+1}$ one could also choose one other specific variable. For simplicity, we will stick to $x_{d+1}$ in the following. In Figure~\ref{fig:diagram}
we illustrate the omitted terms for the case $d=3$.
In total this means, the number of non-zero terms in the decomposition~\eqref{eq:anova_xi_sphere} reduces to
\begin{align}\label{eq:number_spherical_terms2}
&3^{d+1} -2^d\,(d+1) - \sum_{i=0}^{d-2} \binom{d}{i} 2^{i} - 2^{d} = 3^{d+1} -2^d \,(d+2) - (1+2)^d + \binom{d}{d-1}2^{d-1}+2^d  \notag\\
&= 2\cdot3^{d} -2^d \,(d+2)  + d \, 2^{d-1}+2^d = 2\cdot 3^{d} -2^{d-1}(d+2).
\end{align}

\subsection{The challenges with the even terms}\label{sec:challenge}
The aim of this paper is to construct a spherical ANOVA decomposition of the form~\eqref{eq:anova_xi_sphere}, which combines a decomposition into ANOVA terms with the decomposition into parity parts. In Section~\ref{sec:parity} we showed that the terms $\AA_{\bm u}$ should be odd in all directions $i\in \bm u$. This means that with the operator $\AA_{\bm u}$ we could only represent functions $f$, which have non-zero spherical ANOVA terms only for $\bm u$ with $\bm \xi=(\bm 1_{\bm u},\bm 0_{\bm u^c})$. We also include even parts of the function $f$ in our decomposition in Definition~\ref{def:spherical_anova_final}. We analyze these parts here in more detail. \par
If the function $f$ is a sum of terms, which are odd in all direction, the Definition~\ref{def:spherical_anova_final} coincides with the decomposition, which we would expect from the classical ANOVA decomposition: Consider for example
$$f(\x) =  \underbrace{x_1  x_2^3}_{f_{\{1,2\},\bm e_1 +\bm e_2}} + \underbrace{2 x_3  x_4^5}_{f_{\{3,4\},\bm e_3 +\bm e_4}} + \underbrace{0.05x_5}_{f_{\{5\},\bm e_5}},$$
for more details about the calculations see Appendix~\ref{sec:app1}. We marked in Figure~\ref{fig:diagram} all the terms, which are odd in all directions. We do not need to do anything else for these terms.\par

But as soon as some even term is involved, we run into challenges. We begin with an easy example which shows the problems of the underlying dependence of the variables on the sphere. 
\begin{figure}[p]
\begin{tikzpicture}[
    node distance=2.5cm and 2.2cm,
    box/.style={
        rectangle,
        rounded corners=3pt,
        draw=black!80,
        very thick,
        align=center,
        minimum width=2cm,
        minimum height=1.3cm,
        fill=white,
        drop shadow,
        font=\scriptsize\sffamily
    },
    link/.style={->, >=LaTeX, thick, color=gray!30},
]
    
    \newcommand{\vEmpty}[1]{\textcolor{blue!80!black}{\textbf{#1}}}
    \newcommand{\vOne}[1]{\textcolor{red!90!black}{\textbf{#1}}}
    \newcommand{\vTwo}[1]{\textcolor{teal!90!black}{\textbf{#1}}}
    \newcommand{\vThree}[1]{\textcolor{magenta!90!black}{\textbf{#1}}}
    \newcommand{\vFour}[1]{\textcolor{lime!90!black}{\textbf{#1}}}
    \newcommand{\vOneTwo}[1]{\textcolor{orange!90!black}{\textbf{#1}}}
    \newcommand{\vOther}[1]{\textcolor{black}{#1}}
    
    \newcommand{\bad}[1]{\textcolor{gray!50}{\sout{#1}}}

    \node[anchor=north west, draw=gray!40, rounded corners, fill=gray!5, font=\small\sffamily, inner sep=5pt,text width=5cm] at (-7.9, 15) {
        \textbf{Legend: Colors for $\bm \xi$}\\
        \vEmpty{blue}: $\bm \xi = (0,0,0,0)^\top$\\
        \vOne{red}: $\bm \xi = (1,0,0,0)^\top$\\
        \vTwo{green}: $\bm \xi =(0,1,0,0)^\top$\\
        \vThree{magenta}: $\bm \xi =(0,0,1,0)^\top$\\
        \vFour{lime}: $\bm \xi =(0,0,0,1)^\top$\\
        \vOneTwo{orange}: $\bm \xi =(1,1,0,0)^\top$\\
        \bad{grey}: $4\in \bm u$ and $\xi_{4}=0$ (redundant)
    };

    \node[box] (empty) at (-7,10) {
        $\bm u$\\[-2pt]
        \rule{1.5cm}{0.4pt}\\[2pt]
        $\bm \xi$
    };

    \node[box] (empty) at (0,0) {
        $ \varnothing$ \\[-2pt]
        \rule{1.5cm}{0.4pt}\\[2pt]
       \vEmpty{0000}
    };

    \node[box] (u1) at (-6, 3) {
        $\{1\}$ \\[-2pt] \rule{1.5cm}{0.4pt}\\[2pt]
        \vEmpty{0000} \\ \vOne{\fbox{1000}}
    };
    
    \node[box] (u2) at (-2, 3) {
        $\{2\}$ \\[-2pt] \rule{1.5cm}{0.4pt}\\[2pt]
        \vEmpty{0000} \\ \vTwo{\fbox{0100}}
    };
    
    \node[box] (u3) at (2, 3) {
        $\{3\}$ \\[-2pt] \rule{1.5cm}{0.4pt}\\[2pt]
        \vEmpty{0000} \\ \vThree{\fbox{0010}}
    };
    
    \node[box] (u4) at (6, 3) {
        $\{4\}$ \\[-2pt] \rule{1.5cm}{0.4pt}\\[2pt]
        \bad{0000} \\ 
        \vFour{\fbox{0001}}
    };

    
    \node[box] (u12) at (-6, 7) {
        $\{1, 2\}$ \\[-2pt] \rule{1.5cm}{0.4pt}\\[2pt]
        \begin{tabular}{c@{\hskip 0.1cm}c}
        \vEmpty{0000} & \vOne{1000} \\
        \vTwo{0100} & \vOneTwo{\fbox{1100}}
        \end{tabular}
    };

    \node[box] (u13) at (-3.6, 7) {
        $\{1, 3\}$ \\[-2pt] \rule{1.5cm}{0.4pt}\\[2pt]
        \begin{tabular}{c@{\hskip 0.1cm}c}
        \vEmpty{0000} & \vOne{1000} \\
        \vThree{0010} & \vOther{\fbox{1010}}
        \end{tabular}
    };

    \node[box] (u23) at (-1.2, 7) {
        $\{2, 3\}$ \\[-2pt] \rule{1.5cm}{0.4pt}\\[2pt]
        \begin{tabular}{c@{\hskip 0.1cm}c}
        \vEmpty{0000} & \vTwo{0100} \\
        \vThree{0010} & \vOther{\fbox{0110}}
        \end{tabular}
    };

    \node[box] (u14) at (1.2, 7) {
        $\{1, 4\}$ \\[-2pt] \rule{1.5cm}{0.4pt}\\[2pt]
         \begin{tabular}{c@{\hskip 0.1cm}c}
        \bad{0000} & \bad{1000} \\
        \vFour{0001} & \vOther{\fbox{1001}}
        \end{tabular}
    };

    \node[box] (u24) at (3.6, 7) {
        $\{2, 4\}$ \\[-2pt] \rule{1.5cm}{0.4pt}\\[2pt]
        \begin{tabular}{c@{\hskip 0.1cm}c}
        \bad{0000} & \bad{0100} \\
        \vFour{0001} & \vOther{\fbox{0101}}
        \end{tabular}
    };

    \node[box] (u34) at (6, 7) {
        $\{3, 4\}$ \\[-2pt] \rule{1.5cm}{0.4pt}\\[2pt]
        \begin{tabular}{c@{\hskip 0.1cm}c}
        \bad{0000} & \bad{0010} \\
        \vFour{0001} & \vOther{\fbox{0011}}
        \end{tabular}
    };

    \node[box, minimum width=4.5cm, minimum height=3.5cm] (u1234) at (0, 12) {
        $\{1, 2, 3, 4\}$ \\[-2pt] 
        \rule{4cm}{0.4pt}\\[3pt]
        \renewcommand{\arraystretch}{1.1}
        \begin{tabular}{c@{\hskip 0.5cm}c}
            \vOther{\fbox{1111}} & \bad{1110} \\
            \vOther{0111} & \bad{0110} \\
            \vOther{1011} & \bad{1010} \\
            \vOther{1101}  & \bad{1100} \\
            \vOther{0011} & \bad{0010} \\
            \vOther{0101} & \bad{0100} \\
            \vOther{1001} & \bad{1000} \\
            \vFour{0001} & \bad{0000} 
        \end{tabular}
    };

    \foreach \x in {u1, u2, u3, u4} \draw[link] (empty) -- (\x);
    
    \draw[link] (u1) -- (u12); \draw[link] (u1) -- (u13); \draw[link] (u1) -- (u14);
    \draw[link] (u2) -- (u12); \draw[link] (u2) -- (u23); \draw[link] (u2) -- (u24);
    \draw[link] (u3) -- (u13); \draw[link] (u3) -- (u23); \draw[link] (u3) -- (u34);
    \draw[link] (u4) -- (u14); \draw[link] (u4) -- (u24); \draw[link] (u4) -- (u34);

    \foreach \x in {u12, u13, u23, u14, u24, u34} \draw[link] (\x) -- (u1234);

\end{tikzpicture}
\caption{Visualization of all possible indices $\bm u$ and $\bm \xi$ of the terms $f_{\bm u,\bm \xi}$ for the case $d=3$. As calculated in~\eqref{eq:number_spherical_terms}, we have in total $49$ terms. In Definition~\ref{def:spherical_anova_final} we defined these terms iteratively. We illustrate this definition by using different colors for the terms belonging to different parity vectors $\bm \xi$. In Section~\ref{sec:redundancy} we explain which terms have to be omitted due to redundancies on the sphere. We have grayed out the corresponding vectors $\bm \xi$. This reduces the decomposition to 34 terms, as calculated in~\eqref{eq:number_spherical_terms2}. We mark all terms where we do not have the challenges in Section~\ref{sec:challenge} with a \fbox{box}. } 
\label{fig:diagram}
\end{figure}

\begin{example}\label{ex:x1}
Let the function be $f(\x) = x_1^2$, which is a pure even function, which depends only on $x_1$, but due to the fact that $\sum_{i=1}^d x_i^2 = 1$ on the sphere, the operators $P_{\bm u}$ applied to this function give
\begin{small}
\begin{align*}
P_{\varnothing}f &= \tfrac{1}{d+1}\\
P_{1}f &= x_1^2,\\
P_{i}f &=\frac{1}{d} (1-x_i^2)& \quad i =\{2,\ldots, d+1\},\\
P_{\{1\}\cup \bm a}f &=x_1^2 & \quad \text{ for }\bm a \subset [d+1]\backslash\{ 1\} ,\\
P_{\{\bm a\}}f &= \frac{1}{\omega_{d-|\bm a|}} (1-\norm{x_{\bm a}^2})\int_{\S^{|\bm a^c|-1}} x_1^2 \d \mu_{|\bm a^c|-1}(\x_{\bm a^c}) =  \frac{1}{d+1-|\bm a|}\left(1-\norm{\x_{\bm a}}^2\right)  & \quad \text{ for } \bm a \subset [d+1]\backslash\{ 1\}.
\end{align*}
\end{small}
For details about the calculation of the involved integrals, see also Appendix~\ref{sec:app1}.
Despite the function depending only on $x_1$, the application of the operator $P_{\bm u}$ on the function leads to non-constant terms, even if $1\notin \bm u$. In this easy example, we would expect that the spherical ANOVA decomposition is $f_{\varnothing,\bm 0} = \frac{1}{d+1} $, $f_{\{1\},\bm 0} = x_1^2 - \frac{1}{d+1} $ and all other terms are zero. The Definition~\ref{def:spherical_anova_final} with the operator $P_{\bm u}$ leads to a much more complicated decomposition than the expected one, it introduces unnecessarily additional terms. This is due to the natural dependence of the terms $f_{\bm u,\bm \xi}$ if some entry of $\bm \xi_{\bm u}$ is zero. Namely, Definition~\ref{def:spherical_anova_final} leads to
\begin{align*}
f_{\varnothing, \bm 0} &= \tfrac{1}{d+1},  & f_{\{1\}, \bm 0} &= x_1^2 - \tfrac{1}{d+1},\\
f_{\{i\}, \bm 0} &= \frac{1}{d} (1-x_i^2) - \tfrac{1}{d+1}, & f_{\{1,i\}, \bm 0} &=P_{\{1,i\}}f- f_{\{1\}, \bm 0} - f_{\{i\}, \bm 0} - f_{\varnothing, \bm 0} = - f_{\{i\}, \bm 0}.
\end{align*}
And also similar for the higher order terms, which are not zero, namely 
$$f_{\{i\}\cup \bm a,\bm 0} = - f_{\bm a,\bm 0} \quad \text{ for all }\bm a \subset \{2,3,\ldots, d+1\}.$$
Our desired function $f = x_1^2$ is completely included in two terms, more precisely, $f= f_{\varnothing,\bm 0} + f_{\{1\},\bm 0} $. The only non-zero terms are $f_{\bm a,\bm\xi}$ with $\bm a\subseteq \bm u =  \{1\}$. All the other terms cancel each other out, such that we can set them to zero. 
\end{example}
This example shows how we will modify Definition~\ref{def:spherical_anova_final}.  
To mitigate the problem of dependencies and make our spherical ANOVA decomposition more similar to the classical decomposition, we show the following.
\begin{lemma}\label{lem:0=sum}
Let the function $f(\x)$ have the form $f(\x)= g(\x_{\bm u})$, depend on all variables in $\bm u$ and have parity $\bm \xi$, i.e. $\Xi_{\bm \xi} f = f$. Let furthermore the parity vector $\bm \xi $ have the form $\bm \xi = (\bm 1_{\bm v},\bm 0_{\bm v^c})^\top$ with $\bm v\subset \bm u$.  
Then for all $\varnothing\neq \bm b\subseteq [d+1]\backslash \bm u$ we have for the spherical ANOVA terms defined in Definition~\ref{def:spherical_anova_final} that
\begin{equation}\label{eq:sum_terms=0}
0 = \sum_{\bm v \subseteq \bm a \subseteq \bm u}f_{\bm a\cup \bm b,\bm \xi } .
\end{equation}
\end{lemma}
\begin{figure}[b]
    \centering
    \begin{tikzpicture}[scale=0.6, transform shape,
        outline/.style={thick, draw=black},
        set label/.style={font=\Large\bfseries, text=black}]
        
        \draw[outline, fill=gray!10] (0,0) ellipse (3.5cm and 2.5cm);
        \node[set label] at (2, 1.5) {$\bm u$};
        
        \draw[outline, fill=gray!30] (-1.5,0) circle (1.2cm);
       \node[set label, align=center] at (-1.5, 0) {$\bm v$, \\ $\bm \xi_{\bm v} = \bm 1$};
        
        \draw[outline, fill=gray!30] (5.5,0) circle (1.2cm);
        \node[set label] at (5.5, 0) {$\bm b$};
    \end{tikzpicture}
    \caption{Illustration of the indices $\bm u, \bm v,\bm b \subseteq [d+1]$ in the proof of Lemma~\ref{lem:0=sum}.} 
    \label{fig:indices}
\end{figure}
\begin{proof}
We illustrate the indices in this proof in Figure~\ref{fig:indices}. 

We do the proof by induction over the order $|\bm b|$. First, let $\bm b =i$. Then by definition, 
\begin{align*}
f_{\bm u\cup  i,\bm \xi } &= P_{\bm u\cup i } [\Xi_{\bm \xi} f] - \sum_{\bm v\subseteq \bm a \subset \bm u\cup i} f_{\bm a,\bm \xi } = P_{\bm u } [\Xi_{\bm \xi} f] -   \sum_{\bm v\subseteq \bm a \subseteq \bm u} f_{\bm a,\bm \xi } - \sum_{\bm v\subseteq \bm a \subset \bm u} f_{\bm a \cup i,\bm \xi }\\
&= - \sum_{\bm v\subseteq \bm a \subset \bm u} f_{\bm a\cup i,\bm \xi }.
\end{align*}
Thereby we used that 
\begin{equation}\label{eq:f=sum_subsets}
f= \sum_{\bm v \subseteq\bm a\subseteq \bm u}f_{\bm a,\bm \xi}.
\end{equation}
It follows~\eqref{eq:sum_terms=0} for $\bm b =\{i\}$. For the induction step assume that~\eqref{eq:sum_terms=0} holds for all $\bm b $ with $|\bm b|< k$ and choose now $\bm b$ with $|\bm b|=k$, then
\begin{align*}
f_{\bm u\cup  \bm b,\bm \xi } &= P_{\bm u\cup \bm b } [\Xi_{\bm \xi} f] - \sum_{\bm v\subseteq \bm a \subset \bm u\cup \bm b} f_{\bm a,\bm \xi } \\
&= P_{\bm u } [\Xi_{\bm \xi} f] -   \sum_{\bm v\subseteq \bm a \subseteq \bm u} f_{\bm a,\bm \xi }- \sum_{\varnothing\neq \tilde{\bm b} \subset \bm b}\sum_{\bm v\subseteq \bm a \subseteq \bm u} f_{\bm a\cup \tilde{\bm b},\bm \xi } -\sum_{\bm v\subseteq \bm a \subset \bm u} f_{\bm a\cup \bm b,\bm \xi }\\
& =\underbrace{ f -   \sum_{\bm v\subseteq \bm a \subseteq \bm u} f_{\bm a,\bm \xi }}_{=0, \text{ due to \eqref{eq:f=sum_subsets}}} - \sum_{\varnothing\neq \tilde{\bm b} \subseteq \bm b}\underbrace{\sum_{\bm v\subseteq \bm a \subset \bm u} f_{\bm a\cup \tilde{\bm b},\bm \xi }}_{=0 \text{ due to induction assumption}} - \sum_{\bm v\subseteq \bm a \subset \bm u} f_{\bm a\cup \bm b,\bm \xi }.
\end{align*}
Hence,~\eqref{eq:sum_terms=0} is also fulfilled for $\bm b$ with $|\bm u|=k$.
\end{proof}
\changed{Finally, we state a definition for the final spherical ANOVA decomposition.}
\changed{
\begin{definition}\label{def:final_def}
To define the spherical ANOVA decomposition do the following steps:
\begin{enumerate}
    \item Construct the parity decomposition $f=\sum_{\bm \xi \in [0,1]^d} \Xi_{\bm \xi} f$, described in~\eqref{eq:def_even_odd_d}.
    \item Construct the term $f_{\bm u,\bm \xi}$ with Definition~\ref{def:spherical_anova_final}. 
    \item For every $\bm \xi \in [0,1]^{d+1}$: If  $\Xi_{\bm \xi} f$ does not depend on the variables $\tilde{ \bm u}^c$, i.e., does depend on all variables in $\tilde{ \bm u}$, then set
    $$f_{\bm u,\bm \xi} = 0 \quad \text{ for all }\quad \bm u \not\subseteq \tilde{\bm u}.$$
\end{enumerate}
\end{definition}}%
We will show in the following how to compute the spherical ANOVA decomposition numerically.

\section{Choice of basis}\label{sec:basis}
For periodic functions one can write every function in $L_2(\T^d)$ with the Fourier series. There is a direct connection between the index $\bm k$ of the Fourier coefficients and the ANOVA terms $f_{\bm u}$, i.e.\,which parts of the Fourier series belong to which ANOVA term, see~\cite{PoSc19a}. Similarly, for orthogonal polynomials with tensor product structure, there is an easy connection between the ANOVA index $\bm u$ and the frequency indices. See also~\cite{diss} for an overview about different basis choices for modeling the ANOVA decomposition on tensor product domains.\par
In our case here, with the sphere as domain, we do not have a tensor product structure of the domain available. However, for every spherical ANOVA term $f_{\bm u,\bm \xi}$ we construct in the following a basis~$\mathcal B_{\bm u,\bm \xi}$. 
\subsection{The one-dimensional terms}
Let $i \in [d+1]$. To describe the space of one-dimensional spherical ANOVA terms $f_{\{i\},\bm \xi}$ with a basis $\mathcal B_{\{i\},\bm \xi}$, we may choose an orthogonal basis $\eta_{k}(x_i)$, such that 
\begin{equation*}
\delta_{k,\ell} = \langle \eta_k(x_i),\eta_{\ell}(x_i)\rangle_{\S^d} = \int_{-1}^1 \eta_k(x_i)\,\eta_{\ell}(x_i)\, m_i(x_i)\d x_i,
\end{equation*}
which means that the basis functions have to be orthogonal with respect to $m_i(x_i) \changed{=\omega_{d-1} \left(1-x_i^2\right)^{(d-2)/2}}$ on $[-1,1]$, which are the \textbf{Gegenbauer polynomials} with parameter $\alpha = \tfrac{d-1}{2}$, usually denoted by $C_k^\alpha$, see~\cite{Ma64}. For $k\in \N$ define 
\begin{equation*}
C_{k}^\alpha(x) \coloneqq \frac{1}{\Gamma(\alpha)} \sum_{i = 0}^{\lfloor \tfrac k2 \rfloor}(-1)^i\frac{\Gamma(\alpha + k -i)}{i!\, (k-i)!} (2x)^{k-2i}.
\end{equation*}
The first polynomials have the form: 
\begin{align*}
C_{0}^{(\alpha)}(x) &= 1,\\
C_1^{(\alpha)} (x)&=2\alpha x,\\ 
C_2^{(\alpha)} (x)&=-\alpha +2\alpha(1+\alpha)x^2.
\end{align*}
For one-dimensional terms, there are only the two possibilities: even and odd functions. The Gegenbauer polynomials have the nice property that for odd $k$ they are odd functions and for even $k$ they are even functions. Since the linear spaces of odd and even function on $[-1,1]$ are orthogonal with respect to $m_i(x_i) \changed{=\omega_{d-1} \left(1-x_i^2\right)^{(d-2)/2}}$, we receive an orthogonal basis of the space of odd functions on $[-1,1]$ for describing the one-dimensional odd spherical ANOVA terms in Definition~\ref{def:spherical_anova_final} by
\begin{equation*}
   \mathcal B_{i,\bm e_i} =  \left\{C_{k}^{(d-1)/2}(x)\mid x \in [-1,1] \right\}_{k=1,3,5,\ldots},
\end{equation*}
which depend on the ambient dimension $d$. For the even one-dimensional term, naturally a basis can be constructed by the even Gegenbauer polynomials. Let $\bm u=\{i\}$, then due to our Definition~\ref{def:spherical_anova_final},
\begin{align*}
P_{\varnothing}f_{\{i\},\bm \xi}&= 
    P_{\varnothing }\E_{1}\AA_{\{1\}} f -  P_{\varnothing }P_{\varnothing } f  
=0,
\end{align*}
which means that the coefficient for the constant in the even one-dimensional term is zero, such that an even basis for $f_{\{i\},\bm 0}$ reads
\begin{equation*}
   \mathcal B_{i,\bm 0} =  \left\{C_{k}^{(d-1)/2}(x) \mid x \in [-1,1]\right\}_{k=2,4,6,\ldots}.
\end{equation*}

 \subsection{The higher-dimensional terms}

 For describing the spherical ANOVA terms $f_{\bm u,\bm \xi}$ on the domain $\B^{|\bm u|}$, we are searching for an orthogonal basis $\mathcal B_{\bm u,\bm \xi}=\{\eta_k(\x_{\bm u})\mid\x_{\bm u} \in\B^{|\bm u|}\}_{k}$ with orthogonality with respect to the weight function $m_{\bm u}(\x_{\bm u})\changed{=\omega_{d-|\bm u|}\left(1-\norm{\x_{\bm u}}^2\right)^{(d-|\bm u|)/2}}$, since then the basis is orthogonal on $\S^d$ with respect to the uniform measure on the sphere $\mu_d$. Changing coordinates to polar coordinates, this weight function becomes
 $$m_{\bm u}(\x_{\bm u})\changed{=\omega_{d-|\bm u|}  \left(1-r^2\right)^{(d-|\bm u|)/2}},$$
 which leads us to the Jacobi polynomials.

\begin{remark}
\textbf{Jacobi polynomials} $P^{\alpha,\beta}_n(x)$ form an orthonormal basis in $L_2( [-1,1])$ with respect to the weight $(1-x)^\alpha \, (1+x)^\beta$. 
For polynomials on $[0,1]$ we use the substitution $t=r^2-1$, which means that
\begin{align*}
    \delta_{k,\ell}&= \int_{-1}^1 P^{\alpha,\beta}_k(t) P^{\alpha,\beta}_{\ell}(t) (1-t)^\alpha (1+t)^\beta\d t\\
    &=4 \int_{0}^1 P^{\alpha,\beta}_k(2r^2-1) P^{\alpha,\beta}_{\ell}(2r^2-1)\,  [2(1-r^2)]^{\alpha}(2r^2)^{\beta} r \d r\\
    &=4 \,\cdot  2^{\alpha +\beta} \int_{0}^1 P^{\alpha,\beta}_k(2r^2-1) P^{\alpha,\beta}_{\ell}(2r^2-1)\,  (1-r^2)^{\alpha}\,r^{2\beta+1}  \d r.
\end{align*}
\end{remark}

With this, an orthogonal basis on $\B^{n}$, used in the literature~\cite[Prop. 11.1.13]{DaXu13}, \cite[Prop. 5.2.1]{DuXu14} is the following.
For $N\in \N$, $0\leq j\leq \tfrac N2$ and $1\leq k \leq \dim (\mathcal{H}_{N-2j}(\mathbb{S}^{|\bm u|}))$ define
\begin{equation}\label{eq:PjkN}
P_{j,k}^N(\x ) = P_{j}^{\tfrac{d-1-|\bm u|}{2},N-2j+\frac{d-2}{2}}(2\norm{\x}^2-1)Y_{k,{N-2j}}(\x).
\end{equation}
These form an orthogonal basis of the space of orthogonal polynomials of degree exact $N$ with respect to the inner product 
$$\langle f(\x_{\bm u}),g(\x_{\bm u})\rangle_{\S^d} = \langle f(\x_{\bm u}),g(\x_{\bm u})\rangle_{(\B^{|\bm u|}, m_{\bm u})}= c \int_{\B^{|\bm u|}} f(\x_{\bm u}) g(\x_{\bm u}) m_{\bm u}(\x_{\bm u})\dx\x_{\bm u},$$
where $c$ is a normalization constant. 

\subsubsection{The two-dimensional terms}
In the two-dimensional case the spherical harmonics are precisely the cosine and sine functions. For the sake of simplicity of the notation, we will construct here a basis for the terms $f_{\{1,2\},\bm \xi}$, but for all other two-dimensional terms the procedure is the same.
\par
In two-dimensional polar coordinates $(x_1,x_2)=(r\cos\theta,r\sin\theta)$,
as basis $Y_{k,{N-2j}}(\x)$ we use~\eqref{eq:Y_for_d=2}. Then, the orthogonal basis in~\eqref{eq:PjkN} for $\B^2$ reads
\begin{small}
\begin{align}\label{eq:PjkN2}
&\left\{P_{j}^{\tfrac{d-3}{2},N-2j}(2r^2-1)r^{N-2j}\cos((N-2j)\theta) \right\}_{N\in \N_0, 0\leq j \leq \tfrac N2}\notag\\
&\quad \bigcup \left\{P_{j}^{\tfrac{d-3}{2},N-2j}(2r^2-1)r^{N-2j}\sin((N-2j)\theta)\right\}_{N\in \N_0, 0\leq j < \tfrac N2}.
\end{align}
\end{small}
In polar coordinates, the parity vector $\bm \xi =(\xi_1,\xi_2,\bm 0_{d-1})^\top$ defines the parity by
$$g(r,\theta)=(-1)^{\xi_1}g(r,-\theta), \quad g(r,\theta)=(-1)^{\xi_2} g(r,\pi-\theta). $$

In~\eqref{eq:PjkN2} we have to classify the basis functions to the parity vectors and furthermore, due to the dependencies of the spherical ANOVA terms we have to be careful additionally and have to omit some of the basis functions, which are already included in the terms of lower order $0$ or $1$. \par

The selection rule is based on the polynomial degree $N$ and the resulting parity vector $\bm \xi_{\{1,2\}} \in \{0,1\}^2$ of the term $f_{\bm u,\bm\xi}$. The term $N-2j$ has the same parity as $N$, such that a distinction between even and odd $N$ is useful.
We exclude all constant ($N=0$) and linear ($N=1$) terms. For the purely even parity $\bm \xi = (0,0)$, we additionally exclude quadratic terms ($N=2$) that decompose into sums of univariate functions.
The parity is determined by $N$ and the angular function type, as detailed in Table~\ref{tab:basis_selection}. There the basis functions for constructing $\mathcal B_{\{1,2\},\bm \xi}$ are summarized.

\begin{table}[htb]
    \centering
    \renewcommand{\arraystretch}{1.2}
    \begin{tabular}{@{}ccclcll@{}}
        \toprule
        $N$ & $j$ &\textbf{Type} & \textbf{Form}  & $\bm \xi_{\{1,2\}}$ & \textbf{Action} & \textbf{Reason} \\
        \midrule
        0 &0& $\cos$ & $1$ & $(0,0)$ & \textcolor{red!60!black}{Omit} & Covered by $f_{\varnothing,\bm 0}$ \\
        \addlinespace
        1 &0& $\cos$ & $x_{1}$ & $(0,1)$ & \textcolor{red!60!black}{Omit} & Covered by $f_{\{1,{\bm e_1}\}}$ \\
        1 &0& $\sin$ & $x_{2}$ & $(1,0)$ & \textcolor{red!60!black}{Omit} & Covered by $f_{\{2,{\bm e_2}\}}$ \\
        \addlinespace
        2 &0& $\cos$ & $x_{1}^2 - x_{2}^2$ & $(0,0)$ & \textcolor{red!60!black}{Omit} & Sum of 1d terms \\
        2 &1& $\cos$ & $x_{1}^2 + x_{2}^2$ & $(0,0)$ & \textcolor{red!60!black}{Omit} & Sum of 1d terms \\
        2 &0& $\sin$ & $x_{1} x_{2}$ & $(1,1)$ & \textcolor{green!40!black}{\textbf{Keep}} & True interaction \\
        \addlinespace
        $\geq 3$, odd &$0\leq j\leq \tfrac{ N-3}{2}$& $\cos$ & & $(1,0)$  & \textcolor{green!40!black}{\textbf{Keep}} & \\
        $\geq 3$, odd &$j=\tfrac{ N-1}{2}$& $\cos$& & $(1,0)$  & \textcolor{red!60!black}{Omit} & Redundancy\\
        $\geq 3$, odd &$0\leq j\leq \tfrac{ N-3}{2}$& $\sin$ & & $(0,1)$  & \textcolor{green!40!black}{\textbf{Keep}} & \\
        $\geq 3$, odd &$j=\tfrac{ N-1}{2}$& $\sin$& & $(0,1)$  & \textcolor{red!60!black}{Omit} &Redundancy \\
         $\geq 4$, even &$0\leq j\leq \tfrac N2-2$& $\cos$ & & $(0,0)$ & \textcolor{green!40!black}{\textbf{Keep}} & \\
              $\geq 4$, even &$\tfrac N2-1\leq j\leq \tfrac N2$& $\cos$ & & $(0,0)$ & \textcolor{red!60!black}{Omit} & Redundancy\\
        $\geq 4$, even &$0\leq j< \tfrac N2$& $\sin$ &  & $(1,1)$ & \textcolor{green!40!black}{\textbf{Keep}} & \\
        \bottomrule
    \end{tabular}

      \caption{Classification and selection of two-dimensional basis functions based on degree $N$ and angular frequency $m=N-2j$. Terms marked as \textit{Omit} are redundant with lower-order ANOVA terms.}
    \label{tab:basis_selection}
\end{table}

    \textbf{The case $N=2$}:\\
   A rationale for discarding the terms with $N=2$ for $\bm \xi = (0,0)$ is the specific geometry of the sphere. The radial term $P_1(2r^2-1)$ and the angular term $r^2 \cos(2\theta)$ correspond to linear combinations of $x_{1}^2$ and $x_{2}^2$. Since the even one-dimensional terms $f_{\{i\}, \bm 0}$ already span the space of $x_{i}^2$ (orthogonalized against the constant), including these $N=2$ terms in the interaction basis would introduce linear dependencies.
    In contrast, the term $r^2 \sin(2\theta) \sim x_{1} x_{2}$ is kept as it belongs to the parity class $\bm \xi_{\{1,2\}} = (1,1)$ and represents a pure interaction that cannot be formed by sums of univariate functions.\par
\bigskip
    
    For $N\geq 3$ only the parity $\bm \xi_{\{1,2\}}=(1,1)$ is carefree. Concerning the other cases, we have additional redundancy, which leads to more omitted terms. The reason for this is as follows.\par
    \medskip
    \textbf{The case $\bm \xi_{\{1,2\}}=(0,0)$}:\\
    Let $N$ be even, then the space of homogeneous polynomials in two variables $x_1$ and $x_2$ of the form $x_1^{2k}x_2^{2\ell}$ with $2k+2\ell = N$ has dimension
    $\tfrac N2 +1$, which includes the two one-dimensional terms $x_1^N$ and $x_2^N$. This means from the possibilities for $j$, namely $0\leq j\leq \tfrac N2$ we can keep only $\tfrac N2 -1$ and have to omit two, which are redundant. \par
    \medskip
       \textbf{The case $\bm \xi_{\{1,2\}}=(1,0)$}: \\
Let $N$ be odd, then the space of homogeneous polynomials in two variables $x_1$ and $x_2$ of the form $x_1^{2k+1}x_2^{2\ell}$ with $2k+1+2\ell = N$ has dimension
    $\tfrac{N+1}{2} $, which includes the two one-dimensional term $x_1^N$. This means from the possibilities for $j$, namely $0\leq j\leq \tfrac{N-1}{2}$ we can keep only $\tfrac{N-1}2 $ and have to omit one, which is redundant. The case $\bm \xi_{\{1,2\}}=(0,1)$ is similar.

A similar approach can be used for higher-dimensional terms. However, we will limit ourselves here to this explanation for two-dimensional terms. We do reduce the redundancy between the terms of different order. However these terms are not necessarily orthogonal.

\section{Low dimensional approximation}\label{sec:low_dim}
In many applications, functions on the sphere depend on a large number of variables but are effectively dominated by terms of low order. The ANOVA decomposition allows us to identify this effective dimension and approximate the function by a sum of terms involving only a few variables. This is particularly valuable for mitigating the \textbf{curse of dimensionality} and for developing efficient quadrature or interpolation algorithms. Approaches for low dimensional approximation without the ANOVA decomposition can be found for example in~\cite{Xie22}.\par

Similar to the approaches~\cite{PoSc19a, LiPoUl23} for the classical ANOVA decomposition or~\cite{PoWe24} for the generalized ANOVA decomposition, we propose to approximate the function $f$ using only low-dimensional terms, i.e.,
$$f\approx f_{\varnothing} + \sum_{1 \le |\bm u|\leq q}\sum_{\bm \xi\in \U_{\bm u}}f_{\bm u,\bm \xi}\quad \text{ for some } q\ll d.$$
The next step is to approximate every term by using the specific basis $\mathcal B_{\bm u,\bm\xi}=\{\eta_k^{\bm u, \bm \xi}\}_{k\in \I_{\bm u}}$ introduced in Section~\ref{sec:basis},
\begin{equation}\label{eq:f=sum_a_eta}
f_{\bm u,\bm \xi}(\x_{\bm u}) \approx \sum_{k\in \I_{\bm u}} a^{\bm u,\bm \xi}_{k}\,\eta_k^{\bm u, \bm \xi}(\x_{\bm u}).
\end{equation}
Substituting the basis expansion~\eqref{eq:f=sum_a_eta} into the truncated ANOVA decomposition leads to the global approximation ansatz
\begin{equation*}
    f(\x) \approx a_{\varnothing} + \sum_{1 \le |\bm u|\leq q} \sum_{\bm \xi\in \U_{\bm u}} \sum_{k\in \I_{\bm u}} a^{\bm u,\bm \xi}_{k}\,\eta_k^{\bm u, \bm \xi}(\x_{\bm u}).
\end{equation*}
To determine the unknown coefficients $a^{\bm u,\bm \xi}_{k}$, we rely on a set of sampling nodes 
$$\mathcal X = \{\x^{(1)}, \dots, \x^{(M)}\} \subset \S^d$$ and the corresponding function values $\bm f = (f(\x^{(1)}), \dots, f(\x^{(M)}))^\top \in \R^M$.
Let $N$ denote the total number of basis functions in the truncated expansion and let $\bm a \in \R^N$ be the vector stacking all coefficients $a^{\bm u,\bm \xi}_{k}$.
Evaluating the approximation ansatz at the sampling nodes yields the linear system of equations
\begin{equation*}
    \bm A \bm a \approx \bm y,
\end{equation*}
where the system matrix $\bm A \in \R^{M \times N}$ contains the evaluations of the basis functions. Specifically, the entry $A_{i, \ell}$ corresponds to the $\ell$-th basis function evaluated at the $i$-th sampling node $\x^{(i)}$.\par

In practical applications, the number of samples $M$ is typically chosen to be larger than the number of unknowns $N$, resulting in an overdetermined system. Consequently, we seek the coefficient vector $\bm a$ that minimizes the residual in the Euclidean norm, solving the least squares problem
\begin{equation}\label{eq:least_squares}
    \min_{\bm c \in \R^N} \|\bm A \bm c - \bm y\|_2.
\end{equation}
While this problem can be solved using direct methods (e.g., via QR decomposition or SVD), the dimensions of $\bm A$ can become large for high-dimensional approximations. Moreover, as discussed in previous sections, the basis on the sphere may exhibit linear dependencies or ill-conditioning for certain parity combinations.
To address these challenges efficiently, we propose to solve~\eqref{eq:least_squares} using the \textbf{LSQR algorithm}~\cite{PaSa82}. LSQR is an iterative Krylov subspace method that is particularly well-suited for large, sparse, or ill-conditioned systems and possesses intrinsic regularization properties when the iteration is terminated early.\par

Due to the dependence of the terms of different order, the following procedure may be beneficial: We propose to do the approximation first using all terms of order one. Then, we do a second approximation for the residuum using all terms of order $2$, which could be done further.  
In Section~\ref{sec:numerics} and Figure~\ref{fig:sobol_indices} we apply both possibilities, the plain LSQR with all terms at once and the iterative increase of the order of the terms. 

\subsection{Variances for sensitivity analysis}
Global sensitivity analysis has become a cornerstone of computer experiments, providing essential insights into the input-output dependencies of numerical simulation models. Building on the foundational works~\cite{So90, So95}, the field has evolved significantly over the last two decades, yielding a wide array of advanced statistical methodologies and refinements of the original Sobol’ indices.\par
While classical variance-based sensitivity analysis (Sobol' indices) relies on the orthogonality of the ANOVA terms to decompose the total variance into a sum of partial variances, this property assumes independent input variables. On the sphere $\S^d$, the inputs are intrinsically coupled via the constraint $\norm{\x}=1$. Consequently, for even-parity terms, a strict orthogonality cannot be maintained, and the covariance between interaction terms is generally non-zero. 
Despite this lack of perfect orthogonality, the variances $\sigma^2(f_{\bm u,\bm \xi})$ remain meaningful sensitivity measures for the following reasons:
\begin{itemize}
    \item Following the framework of sensitivity analysis for dependent variables, see for example~\cite{KuTaAn12, PoWe24}, the contribution of a variable is always a combination of its marginal effect and its correlation with other variables. On the sphere, these correlations are not artifacts but physical properties of the domain. Our indices capture the combined effect of a variable and its induced changes on the sphere.
    \item A crucial advantage of our proposed construction is the rigorous exclusion of redundant basis terms. By ensuring that our dictionary of basis functions is linearly independent, we prevent variance inflation. In a redundant system, variance could be arbitrarily shifted between linearly dependent terms (e.g., between $f_{1}$ and a redundant part of $f_{1,2}$), leading to unstable and uninterpretable indices. Our restrictive construction ensures that the computed variances are stable and reflect the intrinsic coupling of the sphere rather than numerical artifacts.
\end{itemize}

\begin{lemma}
The spherical ANOVA terms defined in Definition~\ref{def:spherical_anova_final} have mean zero, i.e.,
$$P_{\varnothing} f_{\bm u,\bm \xi} = 0 \quad \text{ for all } \bm u\in \D\backslash\varnothing \text{ and all } \bm \xi\in \U_{\bm u}. $$
\end{lemma}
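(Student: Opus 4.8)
The plan is to separate the argument according to the parity vector $\bm\xi$, because the reason the mean vanishes is genuinely different for the odd (mixed) terms and the pure even terms. In both cases I would start from the reformulation $P_{\varnothing}f_{\bm u,\bm\xi}=\frac{1}{\omega_d}\int_{\S^d}f_{\bm u,\bm\xi}(\x)\d\mu_d(\x)$, so that it suffices to show this surface integral is zero.

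For the mixed terms, where $\bm\xi\neq\bm 0$, I would use symmetry alone. Every term produced by Definition~\ref{def:spherical_anova_final} has parity $\bm\xi$, so there is an index $i$ with $\xi_i=1$ and $f_{\bm u,\bm\xi}(\x)=-f_{\bm u,\bm\xi}(\x_{[d+1]\backslash i},-x_i)$ almost everywhere. Since $\S^d$ and the uniform surface measure $\mu_d$ are invariant under the reflection $x_i\mapsto -x_i$, applying this substitution inside the integral shows that $\int_{\S^d}f_{\bm u,\bm\xi}\d\mu_d$ equals its own negative, hence vanishes. This settles every term with nontrivial parity simultaneously.

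For the pure even terms, where $\bm\xi=\bm 0$ and $\bm u\neq\varnothing$, the reflection argument is unavailable and I would instead exploit the Möbius-type representation~\eqref{eq:f_u_xi_easy} in its $\bm v=\varnothing$ form, $f_{\bm u,\bm 0}=\sum_{\bm a\subseteq\bm u}(-1)^{|\bm u|-|\bm a|}P_{\bm a}[\Xi_{\bm 0}f]$. Applying $P_{\varnothing}$ and using the composition identity $P_{\varnothing}P_{\bm a}=P_{\varnothing}$ (valid since $\varnothing\subseteq\bm a$), every summand reduces to the same constant $P_{\varnothing}[\Xi_{\bm 0}f]$, which factors out of the sum. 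What remains is the purely combinatorial alternating sum $\sum_{\bm a\subseteq\bm u}(-1)^{|\bm u|-|\bm a|}=(1-1)^{|\bm u|}$, which is zero precisely because $|\bm u|\geq 1$.

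The proof is essentially routine, and the two mechanisms are short once the split by parity is made. The only place where care is needed is if one insists on running the argument uniformly through~\eqref{eq:f_u_xi_easy} for all $\bm\xi$ at once: in the boundary case $\bm v=\bm u$ the alternating sum degenerates to the single term $\bm a=\bm u$ and no longer cancels, so one must still fall back on the reflection argument via $P_{\varnothing}[\Xi_{\bm\xi}f]=0$. Splitting by parity from the start, as above, sidesteps this entirely, and I expect it to be the cleanest route.
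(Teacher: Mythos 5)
Your proof is correct. The split by parity and the reflection argument for $\bm\xi\neq\bm 0$ is exactly what the paper does: it likewise disposes of all terms with nontrivial parity by noting that a term of parity $\bm\xi$ is odd in at least one direction, hence has vanishing mean. Where you differ is the pure even case. The paper proves $P_{\varnothing}f_{\bm u,\bm 0}=0$ by induction on $|\bm u|$ directly from the recursive definition: it applies $P_{\varnothing}$ to $f_{\bm u,\bm 0}=P_{\bm u}[\Xi_{\bm 0}f]-\sum_{\bm a\subsetneq\bm u}f_{\bm a,\bm 0}$, uses $P_{\varnothing}P_{\bm u}=P_{\varnothing}$ to cancel the leading term against the $\bm a=\varnothing$ summand, and invokes the induction hypothesis on the remaining proper nonempty subsets. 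You instead invoke the closed-form M\"obius representation~\eqref{eq:f_u_xi_easy}, collapse every summand to the same constant $P_{\varnothing}[\Xi_{\bm 0}f]$ via that same composition identity, and kill the sum with $\sum_{\bm a\subseteq\bm u}(-1)^{|\bm u|-|\bm a|}=(1-1)^{|\bm u|}=0$. Both arguments rest on the same two ingredients (the composition lemma $P_{\bm v}P_{\bm u}=P_{\bm v}$ and the parity of the terms); yours trades the paper's induction for a reuse of the already-proved M\"obius lemma plus the binomial identity, which is slightly more direct, while the paper's version is self-contained relative to Definition~\ref{def:spherical_anova_final}. Two small remarks in your favor: your reading of the pure even M\"obius formula with $P_{\bm a}$ inside the sum (rather than the $P_{\bm u}$ printed in the paper, which is evidently a typo) is the correct one, and your closing caveat about the degenerate case $\bm v=\bm u$ — where the alternating sum reduces to a single term and one must fall back on the parity argument — identifies precisely the point where a naive uniform treatment would break down.
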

\begin{proof}
Since the spherical ANOVA terms $f_{\bm u,\bm \xi}$ defined in Definition~\ref{def:spherical_anova_final} have parity $\bm \xi$, the mean is automatically zero, if the function $f_{\bm u,\bm \xi}$ is odd in at least one direction. The only remaining case is when $\bm \xi = \bm 0$. But in this case we have for $\bm u\neq \varnothing$,
\begin{align*}
    P_{\varnothing }f_{\bm u,\bm 0 } &= P_{\varnothing} \left( P_{\bm u} [\Xi_{\bm 0}f](\x_{\bm a})- \sum_{ \bm a \subsetneq \bm u} f_{\bm a, \bm 0}(\x_{\bm a})\right)
    = P_{\varnothing}f -  P_{\varnothing}\left(\sum_{ \bm a \subsetneq \bm u} f_{\bm a, \bm 0}(\x_{\bm a})\right)\\
    &=   P_{\varnothing}\left(\sum_{ \varnothing \neq \bm a \subsetneq \bm u} f_{\bm a, \bm 0}(\x_{\bm a})\right) = \sum_{ \varnothing \neq \bm a \subsetneq \bm u}P_{\varnothing} f_{\bm a, \bm 0}(\x_{\bm a}).
\end{align*}
By induction this finishes the proof by using the induction assumption.
\end{proof}
Using the previous lemma, the variances of the spherical ANOVA terms can be easily calculated by 
\begin{align*}
\sigma^2(f_{\bm u,\bm\xi}) &\coloneqq \int_{\S^d} |f_{\bm u,\bm\xi}(\x_{\bm u}) -\int_{\S^d}f_{\bm u,\bm\xi}(\x_{\bm u}) \d{\mu_d}(\x)|^2 \d{\mu_d}(\x)
=\int_{\S^d} |f_{\bm u,\bm\xi}(\x_{\bm u})|^2 \d{\mu_d}(\x)\\
   &\stackrel{\eqref{eq:subs_Bsp}}{=}
 \int_{\B^{|\bm u|}} |f_{\bm u,\bm\xi}(\x_{\bm u})|^2\, m_{\bm u}(\x_{\bm u})
   \d \x_{\bm u},
\end{align*}
which is a weighted variance of the term $f_{\bm u}$ on $\B^{|\bm u|}$. In Section~\ref{sec:basis} we introduced the basis $\mathcal B_{\bm u,\bm \xi}$, which is orthogonal with respect to the weight function $m_{\bm u}$. Using the approximation~\eqref{eq:f=sum_a_eta}, the variances are approximated by
\begin{equation*}
\sigma^2(f_{\bm u,\bm\xi}) \approx  \sum_{\x\in \mathcal X} \left|f_{\bm u,\bm\xi}(\x)\right|^2 \approx \sum_{\x\in \mathcal X} \left|\sum_{k\in \I_{\bm u}} a^{\bm u,\bm \xi}_{k}\eta_k(\x_{\bm u})\right|^2.
\end{equation*}
Since terms with different parity vector are orthogonal with respect to $m_{\bm u}$, we define for $\bm u\neq \varnothing$ the Sobol indices on the sphere,
\begin{equation}\label{eq:S_u}
S_{\bm u} \coloneqq \frac{1}{\sigma^2(f)}\sum_{\bm \xi \in \U_{\bm u}}\sigma^2(f_{\bm u,\bm\xi}) \approx \frac{1}{\sigma^2(\bm f)} \sum_{\bm \xi \in \U_{\bm u}} \sum_{\x\in \mathcal X} \left|\sum_{k\in \I_{\bm u}} a^{\bm u,\bm \xi}_{k}\eta_k(\x_{\bm u})\right|^2 \eqqcolon \tilde{S}_{\bm u},
\end{equation}
where $\sigma^2(\bm f)$ is the variance of the data vector $\bm f$ and the expression $\tilde{S}_{\bm u}$ can be calculated easily using parts of the the matrix $\bm A$ and the coefficient vector $\bm a$. 
We interpret the normalized ratio $S_{\bm u}$ and its approximation $\tilde{S}_{\bm u}$ not strictly as a percentage of disjoint variance, but as a relative importance index reflecting the stable amplitude of the corresponding spherical interaction terms. \par
For the classical ANOVA decomposition the Sobol indices (or sometimes called global sensitivity indices) also denote the fraction of the variance of the ANOVA term and the total variance of the function. Due to the orthogonality between all terms, they sum up to one.

\section{Examples and numerical tests}\label{sec:numerics}
In this chapter, we evaluate the practical applicability and accuracy of the derived spherical ANOVA decomposition through numerical experiments. The primary objective is to demonstrate that the proposed spherical ANOVA decomposition provides a meaningful and interpretable representation of functions on $\S^d$.\par

To this end, we consider a set of test functions with known interaction structures. We compute the variances of the individual ANOVA terms $S_{\bm u}$ from~\eqref{eq:S_u} numerically using the proposed basis approximation and the LSQR solver in Python. 

Let us consider the following test functions $f_{A}, \ldots, f_F  \colon \S^{10}\rightarrow\R$, defined by
\begin{allowdisplaybreaks}
\begin{align*}
f_A(\x) &\coloneqq  x_1  x_2^3 + 2 x_3  x_4^5 + 0.05x_5 \\
f_B(\x) &\coloneqq  x_2x_1^2 \\
f_C(\x) &\coloneqq  x_1^4 + x_2^2 \\
f_D(\x) &\coloneqq  5x_1 x_2^2+ x_4 + \mathrm e^{x_3}+  10 \sin(3 \,\pi x_5)\, x_2^4 \\
f_E(\x) &\coloneqq  \sin(x_1) + 7 \,\sin^2(x_2) + 0.1\,x_3^4 \,\sin(x_1) \\
f_F(\x) &\coloneqq x_1^2x_2^2x_3^2
\end{align*}
\end{allowdisplaybreaks}

The spherical ANOVA decomposition of some of these functions is calculated in details in Appendix~\ref{sec:app1}.
For the numerical tests we draw $M = 10000$ random samples on the sphere $\S^{10}$, which is relatively sparse in this dimension, and use the basis described in Section~\ref{sec:basis} of order up to $q=2$ and polynomials of maximal total order $N=10$. We use~\eqref{eq:S_u} to approximate the Sobol indices with this basis. In Figure~\ref{fig:sobol_indices} we illustrate the resulting indices.
\begin{figure}[p]
    \centering
    
    \begin{subfigure}[b]{0.48\linewidth}
        \centering
        \includegraphics[width=1.1\linewidth]{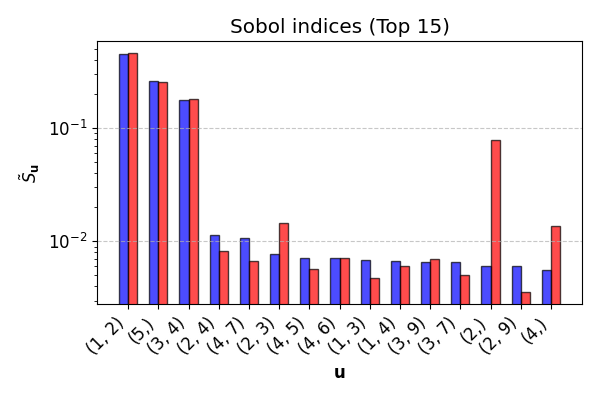}
        \caption{Test function $f_A$.}
        \label{fig:bild1}
    \end{subfigure}
    \hfill 
    \begin{subfigure}[b]{0.48\linewidth}
        \centering
        \includegraphics[width=1.1\linewidth]{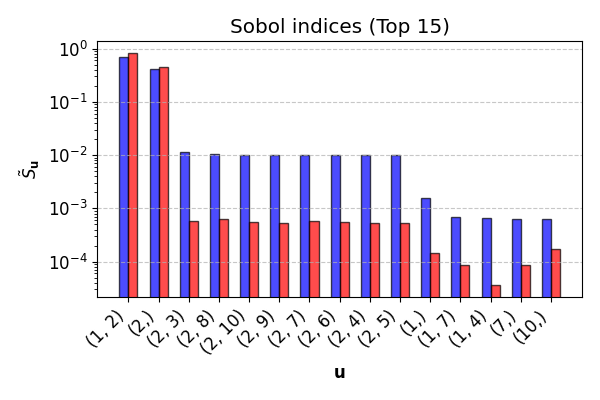}
        \caption{Test function $f_B$.}
        \label{fig:bild2}
    \end{subfigure}
    
    \vspace{1em} 
    
    \begin{subfigure}[b]{0.48\linewidth}
        \centering
        \includegraphics[width=1.1\linewidth]{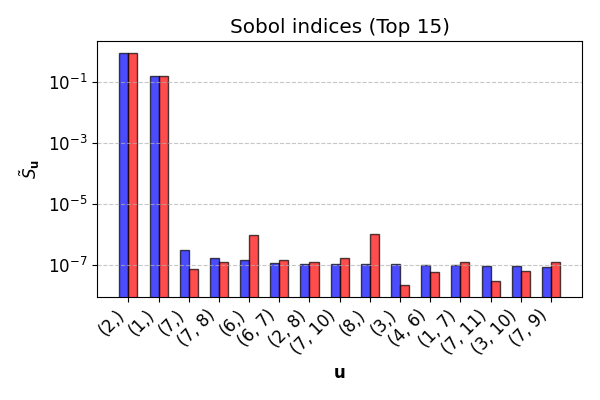}
        \caption{Test function $f_C$.}
        \label{fig:bild3}
    \end{subfigure}
    \hfill
    \begin{subfigure}[b]{0.48\linewidth}
        \centering
        \includegraphics[width=1.1\linewidth]{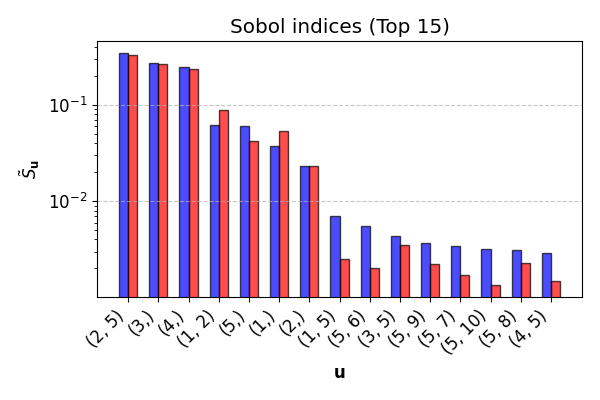}
        \caption{Test function $f_D$.}
        \label{fig:bild4}
    \end{subfigure}
    
    \vspace{1em} 
    
    \begin{subfigure}[b]{0.48\linewidth}
        \centering
        \includegraphics[width=1.1\linewidth]{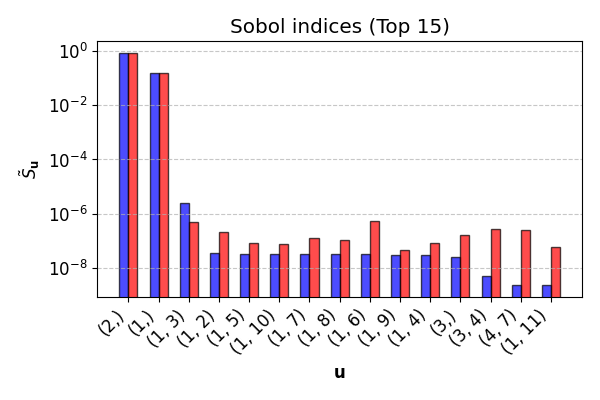}
        \caption{Test function $f_E$.}
        \label{fig:bild5}
    \end{subfigure}
    \hfill
    \begin{subfigure}[b]{0.48\linewidth}
        \centering
        \includegraphics[width=1.1\linewidth]{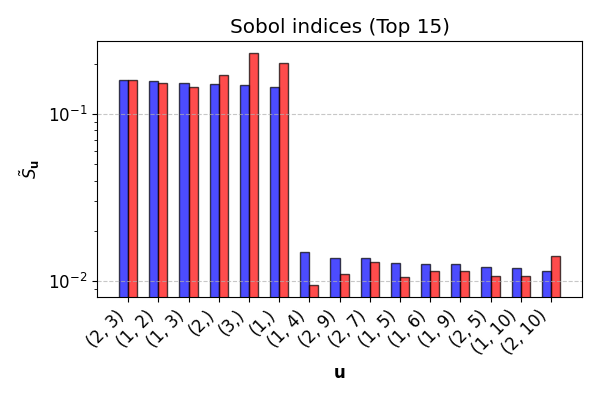}
        \caption{Test function $f_F$.}
        \label{fig:bild6}
    \end{subfigure}
    
    \caption{Approximated Sobol indices~\eqref{eq:S_u} for the different test functions. We plot the results from LSQR with all terms up to order $q=2$ together (red) and the procedure of increasing the order of the used terms iteratively (blue). Note that we use logarithmic scaling.}
    \label{fig:sobol_indices}
\end{figure}

\changed{The following results can be seen in the numerics.}
\begin{itemize}
    \item If all parts of the function are odd functions, like for $f_A$, the non-zero terms are detected easily. One can see in the diagram that it can happen that plain LSQR puts some variance in one-dimensional terms, which should be zero. 
    \item The function $f_B$ consists of one term with mixed parity. We clearly see here the effect, that the terms $f_{\{1,i\}}$ for $i>2$ are not zero, but in the order of magnitude of the Sobol indices $\tilde{S}_{\bm u}$ one can see that despite the dependence of the terms, our algorithm is able to detect the two important term $f_{\{1,2\},\bm 0}$ and $f_{\{2\},\bm 0}$.
    \item The function $f_{C}$ is a sum of two even functions. Our algorithm clearly detects these two one-dimensional terms. Due to the dependence of the even terms, we get some variance for the terms with $\bm u = \{2,i\}$ for $i\geq 3$, the reason was described in~\ref{sec:challenge}. Here, the order of magnitude of the misleading terms is slightly smaller for the case where we use LSQR with all terms of order $0,1$ and $2$ together.   
    \item If there are other terms than polynomials involved, like in the function $f_D$, we still are able to detect the seven non-zero terms in the spherical ANOVA decomposition.
    \item $f_E$ is the Ishigami function, where we are able to find the three non-zero terms. 
    \item The function $f_F$ is a term of order $3$, which means that our approximation with terms of order maximal $q=2$ could not model the function exactly. However, also here we see clearly the non-zero terms belonging to $\bm u\subset \{1,2,3\} $.
\end{itemize}

\subsection{Summary and outlook}
The spherical ANOVA decomposition is a powerful mathematical framework designed to analyze high-dimensional functions defined on the high-dimensional sphere $\S^d$. Unlike the classical ANOVA, this approach respects the intrinsic geometry of the sphere, making it essential for problems involving directional data or isotropic constraints.

Our decomposition allows for the breakdown of a complex function into a sum of orthogonal terms, where each term represents the contribution of specific subsets of variables. This reveals the effective dimension of the function.
\par
By calculating global to Sobol indices, we can quantify how much variance is driven by single variables versus complex interactions, effectively opening the black box of high-dimensional models.
 We are able to construct sparse approximation algorithms that avoid the curse of dimensionality by identifying that a function is dominated by low-order interactions. It paves the way for designing specialized cubature rules that focus computational effort only on the relevant subspaces identified by the ANOVA terms. In fields dealing with normalized feature vectors, our spherical ANOVA decomposition can serve as a robust feature selection and noise reduction tool.

\section*{Acknowledgments}
I would like to thank Daniel Potts for many helpful discussions. This work was funded by the Deutsche Forschungsgemeinschaft (DFG, German Research Foundation) - project number 569580074.

\bibliographystyle{abbrvurl}
\bibliography{references}

\begin{appendix}
\section{\changed{Useful definitions and results}}
In this \changed{appendix} we collect basic concepts and results which \changed{we are using} to construct a spherical ANOVA decomposition.
\subsection{Variable substitutions on the sphere \texorpdfstring{$\S^d$}{} and the ball \texorpdfstring{$\B^d$}{}}
We collect some substitutions here.
Splitting the integral over the ball into two integrals over the sphere and the radius by using polar coordinates leads to the following basic lemma. 
\begin{lemma}\label{lem:subs_ball}
For a function $g\colon \B^d \rightarrow \R$ the integral on the $d$-dimensional ball can be decomposed into
    \begin{equation*}
    \int_{\B^{d}} g(\x) \d\x = \int_{0}^1\int_{\S^{d-1}} g(r\z)\, r^{d-1}  \d \mu_{d-1}(\z) \d r. 
    \end{equation*}
\end{lemma}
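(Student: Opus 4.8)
The plan is to prove the identity by introducing polar coordinates on $\B^d$ and computing the associated Jacobian. I would start from the map $\Phi\colon (0,1]\times\S^{d-1}\to\B^d\setminus\{\bm 0\}$, $(r,\z)\mapsto r\z$, which expresses every nonzero point of the ball uniquely through its radius $r=\norm{\x}$ and its radial projection $\z=\x/\norm{\x}\in\S^{d-1}$. Since the origin is a Lebesgue null set, discarding it does not affect the integral, so it suffices to transform the integral over $\B^d\setminus\{\bm 0\}$ under $\Phi$.

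The central computation is the Jacobian of $\Phi$. Choosing any local parametrization $\z=\z(\bm\theta)$ of $\S^{d-1}$ by coordinates $\bm\theta=(\theta_1,\dots,\theta_{d-1})$, the map becomes $(r,\bm\theta)\mapsto r\z(\bm\theta)$, whose derivative has columns $\z$ and $r\,\partial_{\theta_i}\z$ for $i=1,\dots,d-1$. Because $\norm{\z}=1$ we have $\z\cdot\partial_{\theta_i}\z=0$, so the radial column $\z$ is a unit vector orthogonal to the $d-1$ tangential columns. Expanding the determinant along this orthogonal radial direction then yields $|\det D\Phi|=r^{d-1}\sqrt{\det G}$, where $G_{ij}=\partial_{\theta_i}\z\cdot\partial_{\theta_j}\z$ is the first fundamental form of the parametrization; the factor $r^{d-1}$ arises because each of the $d-1$ tangential columns carries a factor $r$.

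I would then recall that $\sqrt{\det G}\,\d\bm\theta$ is exactly the surface measure element $\d\mu_{d-1}(\z)$ on $\S^{d-1}$, so the change-of-variables formula gives $\d\x=r^{d-1}\,\d r\,\d\mu_{d-1}(\z)$. Substituting this and integrating $r$ over $(0,1]$ produces the claimed identity. The main obstacle is the Jacobian evaluation: one must argue cleanly that the radial and tangential directions decouple, which is exactly where the orthogonality $\z\cdot\partial_{\theta_i}\z=0$ and the isolation of the $r^{d-1}$ factor do the real work. A coordinate-free alternative would be to invoke the coarea formula for the map $\x\mapsto\norm{\x}$, whose gradient has unit norm, combined with the scaling relation between the surface measures on $\S^{d-1}_r$ and $\S^{d-1}$; this avoids local charts at the cost of assuming the coarea formula.
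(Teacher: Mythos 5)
Your proof is correct and follows exactly the route the paper intends: the paper states this lemma as a basic consequence of polar coordinates without writing out a proof, and your change-of-variables argument (with the orthogonality $\z\cdot\partial_{\theta_i}\z=0$ decoupling the radial and tangential directions to produce the Jacobian factor $r^{d-1}\sqrt{\det G}$) is the standard way to make that assertion rigorous. The handling of the origin as a null set and the identification of $\sqrt{\det G}\,\d\bm\theta$ with $\d\mu_{d-1}(\z)$ are both sound, so nothing is missing.
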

\subsubsection{Variable substitutions on the sphere}
Let us decompose the surface measure $\mu_d$ by the following. For every $\bm x\in \S^d$ and $ \varnothing \neq \bm u \subset [d+1]$ we can write as $1=\norm{\x_{\bm u}}^2+ \norm{\x_{\bm u^c}}^2$. 
From \cite[Lemma A.5.4.]{DaXu13} and Lemma~\ref{lem:subs_ball} follows for $|\bm u|\geq 2$ and $\x_{\bm u}\in \S^{|\bm u|-1}$,
\begin{small}
 \begin{align}\label{eq:subs_B}
   &\int_{\S^{|\bm v| + |\bm u|-1}} g(\x) \d \mu_{|\bm v| + |\bm u|-1}(\x) = \int_{\B^{|\bm v|}} \changed{\left(1-\norm{\bm x_{\bm v}}^2\right)^{|\bm u|/2-1}}\int_{\S^{|\bm u|-1}} g\left(\x_{\bm v},\sqrt{1-\norm{\x_{\bm v}}^2} \x_{\bm u}\right)  \d \mu_{|\bm u|-1}(\x_{\bm u}) \d \x_{\bm v} \\
   &=\quad  \int_{0}^1 \int_{\S^{|\bm v|-1}} \changed{\left(1-r^2\right)^{|\bm u|/2-1}}r^{|\bm v|-1}\int_{\S^{|\bm u|-1}} g(r\z_{\bm v},\sqrt{1-r^2} \x_{\bm u})   \d \mu_{|\bm u|-1}(\x_{\bm u}) \d \mu_{|\bm v|-1}(\z_{\bm v}) \d r .\notag
    \end{align}
    \end{small}
    For the case where $\bm u= i$ we write for $\x_{\bm v\cup i}\in \S^{|\bm v|}$, $\x_{\bm v \cup i} = (\sqrt{1-r^2}\,\y_{\bm v},r)$, where $\x_{\bm v}\in \S^{|\bm v|-1}$ and $r\in [-1,1]$. It follows that
    $$\d \mu_{|\bm v|}(\x_{\bm v\cup i}) = \changed{\left(1-r^2\right)^{|\bm v|/2-1}} \d r \d \mu_{|\bm v|-1} (\x_{\bm v}).$$
    Changing the variables $\x_{\bm v \cup i} \rightarrow (\sqrt{1-r^2}\y_{\bm v},r)$ gives
 \begin{equation*}
   \int_{\S^{|\bm v| }} g(\x) \d \mu_{|\bm v|}(\x) 
   = \int_{-1}^1\int_{\S^{|\bm v|-1}}   g\left(\sqrt{1-r^2}\,\y_{\bm v},r  \right) \changed{\left(1-r^2\right)^{|\bm v|/2-1}}\d \mu_{|\bm v|-1}(\y_{\bm v})   \d r .
    \end{equation*}

We use \eqref{eq:subs_B} and obtain in the special case 
$\x=(\x_{\bm u^c},\x_{\bm u}) =(r \z_{\bm u^c},\x_{\bm u}) \in \S^d$ with $d=|{\bm u}|+|{\bm u^c}|-1$, $\z_{\bm u^c}\in \S^{|\bm u^c|-1}$ 
\begin{align}\label{eq:subs_Bsp}
   &\int_{\S^{d}} g(\x) \d \mu_{d}(\x) = 
   \int_{\B^{|\bm u^c|}} \changed{\left(1-\norm{\bm x_{\bm u^c}}^2\right)^{|\bm u|/2-1}}
   \int_{\S^{|\bm u|-1}} g\left(\x_{\bm u^c},\sqrt{1-\norm{\x_{\bm u^c}}^2} \,\x_{\bm u}\right)  \d \mu_{|\bm u|-1}(\x_{\bm u}) \d \x_{\bm u^c} \\
   &=\quad  \int_{0}^1 \int_{\S^{|\bm u^c|-1}} \changed{\left(1-r^2\right)^{|\bm u|/2-1}}r^{|\bm u^c|-1}\int_{\S^{|\bm u|-1}} g(r\z_{\bm u^c},\sqrt{1-r^2}\, \x_{\bm u})   \d \mu_{|\bm u|-1}(\x_{\bm u}) \d \mu_{|\bm u^c|-1}(\z_{\bm u^c}) \d r .\notag 
    \end{align}

\subsubsection{Integrals on spheres with radius \texorpdfstring{$r$}{}}
For the surface integral for a sphere with radius $r$, we have for $\bm y=r \bm x$, $\bm x\in \S^d, \bm y \in \S^d_r$,
\begin{equation}\label{eq:int_sphere_r}
\int_{\S^d_r} h(\bm y) \d \mu_{d}(\bm y) = r^d \int_{\S^d} h(r\bm x) \d \mu_{d}(\bm x).
\end{equation}
We use for $h\colon \S^{d} \rightarrow \R$ the following decomposition of the integral on the sphere,
\begin{align}\label{eq:split_integral}
\int_{\S^d} h(\x) \d \mu_{d}(\x)
 &\stackrel{\eqref{eq:subs_Bsp}}{=}\int_{\B^{|\bm u|}} \changed{\left(1-\norm{\bm x_{\bm u}}^2\right)^{|\bm u^c|/2-1}}
   \int_{\S^{|\bm u^c|-1}} h\left(\x_{\bm u},\sqrt{1-\norm{\x_{\bm u}}^2}\, \x_{\bm u^c}\right)  \d \mu_{|\bm u^c|-1}(\x_{\bm u^c}) \d \x_{\bm u} \notag\\
    &\stackrel{\eqref{eq:int_sphere_r}}{=}\int_{\B^{|\bm u|}} \frac{1}{\sqrt{1-\norm{\x_{\bm u}}^2}}
   \int_{ \S^{|\bm u^c|-1}_{\sqrt{1-\norm{\x_{\bm u}}^2}}} h\left(\x_{\bm u}, \x_{\bm u^c}\right)  \d \mu_{|\bm u^c|-1}(\x_{\bm u^c}) \d \x_{\bm u} .
\end{align}
Similarly, the integral on a sphere with different radius,
\begin{align}\label{eq:split_integral_r}
\int_{\S_r^d} h(r\x) \d \mu_{d}(r\x)
 &\stackrel{\eqref{eq:int_sphere_r}}{=}r^d \int_{\S^d} h(r\x) \d \mu_{d}(\x)\notag\\
 &\stackrel{\eqref{eq:split_integral}}{=} r^d \int_{\B^{|\bm u|}}\frac{1}{\sqrt{1-\norm{\x_{\bm u}}^2}} \int_{\S^{|\bm u^c|-1}_{\sqrt{1-\norm{\x_{\bm u}}^2}} }h(r\x) \d\mu_{d-|\bm  u|}(\x_{\bm u^c}) \d \x_{\bm u}.
\end{align}
For balls with different radius, we  have
\begin{equation}\label{eq:int_ball}
\int_{\B_r^d} h(\y) \d \y = r^d \int_{\B^d} h(r\x) \d \x.
\end{equation}

\subsection{Spherical harmonics}  
A polynomial $p \colon \R^{d+1} \to \R$ is called \textbf{homogeneous of degree $\ell$} if 
\[
p(r\x) = r^{\ell} p(\x) \quad \text{for all } r > 0, \ \x \in \mathbb{R}^{d+1}.
\]
Let $\mathcal P_{\ell}^d$ denote the space of real homogeneous polynomials of degree $\ell$. A polynomial is called  \textbf{harmonic} if it satisfies the Laplace equation
\[
\Delta p = 0, \qquad \Delta = \sum_{i=1}^{d+1} \frac{\partial^2}{\partial x_i^2}.
\]
For $\ell =0,1,\ldots, $ we denote the linear spaces of real harmonic polynomials of degree $\ell$ by 
$$\mathcal H^d_\ell\coloneqq \{p\in \mathcal P_{\ell}^d\mid \Delta p =0\}.$$
The restriction of a homogeneous harmonic polynomial $p$ of degree $\ell$ to the sphere $\mathbb{S}^d$ is called a  \textbf{spherical harmonic of degree $\ell$}.
We denote the corresponding space by
\[
\mathcal{H}_\ell(\mathbb{S}^d)
\coloneqq \bigl\{\, p|_{\mathbb{S}^d} \; \colon \; p  \text{ homogeneous, harmonic, and of degree }  \ell \,\bigr\},
\]
see for example~\cite{DaXu13, DuXu14} for more details about these spaces.
The spaces $\mathcal{H}_\ell(\mathbb{S}^d)$ form an orthogonal decomposition
\[
L_2(\mathbb{S}^d)
= \bigoplus_{\ell=0}^\infty \mathcal{H}_\ell(\mathbb{S}^d),
\]
and any $f \in L_2(\mathbb{S}^d)$ admits the spherical harmonic expansion
\[
f(\x) = \sum_{\ell=0}^\infty \sum_{m=1}^{\dim (\mathcal{H}_\ell(\mathbb{S}^d))} \hat f_{\ell,m} \, Y_{\ell,m}(\x),
\]
where $\{Y_{\ell,m}\}_{m=1}^{\dim (\mathcal{H}_\ell(\mathbb{S}^d))}$ is an orthonormal basis of $\mathcal{H}_\ell(\mathbb{S}^d)$ and $\dim (\mathcal{H}_\ell(\mathbb{S}^d))$ denotes its dimension.
\begin{remark}[Polynomial representatives and coordinate dependence]
Each spherical harmonic $Y_{\ell,m}\in\mathcal{H}_\ell(\mathbb{S}^d)$ is the restriction of a 
homogeneous harmonic polynomial $P_{\ell,m}:\mathbb{R}^{d+1}\to\mathbb{R}$ of degree $\ell$, i.e.
\[
Y_{\ell,m}(\x) = P_{\ell,m}(\x)\big|_{\mathbb{S}^d}.
\]
The polynomial $P_{\ell,m}$ can be expressed as a linear combination of monomials
\[
x_1^{\alpha_1}\,x_2^{\alpha_2}\cdots x_{d+1}^{\alpha_{d+1}}, 
\qquad |\bm\alpha| := \alpha_1+\cdots+\alpha_{d+1} = \ell,
\]
whose coefficients are determined by the harmonicity condition $\Delta P_{\ell,m}=0$.  
Thus, every spherical harmonic of degree $\ell$ involves monomials of total degree $\ell$ in the Cartesian coordinates.  
In particular, the index $\ell$ corresponds to the total polynomial degree, while the secondary index $m$ enumerates the independent harmonic combinations of these monomials, forming a basis of the space
\[
\mathcal{H}_\ell(\mathbb{S}^d)
=\operatorname{span}\bigl\{\,P_{\ell,m}(\x)\big|_{\mathbb{S}^d}: 1\le m\le \dim (\mathcal{H}_\ell(\mathbb{S}^d))\,\bigr\}.
\]
Consequently, each $Y_{\ell,m}$ generally depends on all coordinates $x_1,\dots,x_{d+1}$, though with specific parity and symmetry relations determined by the monomial exponents $\bm\alpha$.
\end{remark}
For $d=2$, $\dim (\mathcal{H}_\ell(\mathbb{S}^d))=2$. In polar coordinates $(x_1,x_2)=(r\cos\theta,r\sin\theta)$ of $\R^2$, this basis is given by
\begin{equation}\label{eq:Y_for_d=2}
    Y_{\ell}^{(1)}=r^\ell \cos(\ell \theta), \quad  Y_{\ell}^{(2)}=r^\ell \sin(\ell \theta).
\end{equation}

\subsection{Decomposition into odd and even parts}\label{sec:even_odd}
Let the domain be $\mathbb D\in \{\S^d,\B^{d+1}\} $. We analyze the parity properties of the underlying functions. In the one-dimensional case, a function $f$ is defined as \textbf{odd} if $f(-x)=-f(x)$ and as \textbf{even} if $f(-x)=f(x)$ almost everywhere. Then, an arbitrary function $f$ is decomposed into its even and odd parts by
\[
f(x) = f_{\mathrm e}(x) + f_{\mathrm o}(x), \qquad
f_{\mathrm e}(x) = \tfrac12(f(x)+f(-x)), \quad
f_{\mathrm o}(x) = \tfrac12(f(x)-f(-x)).
\]
Similarly, we do this for a function on $\S^d$ or $\B^{d+1}$ by using the parity vector $\bm \xi=(\xi_i)_{i=1}^{d+1}\in \{0,1\}^{d+1}$ and defining
\begin{align}\label{eq:def_even_odd_d}
f(\x) &= \sum_{\bm \xi\in \{0,1\}^{d+1}} [\Xi_{\bm\xi} f](\x), \\
\Xi_{\bm \xi}\colon L_2(\mathbb D)\rightarrow L_2(\mathbb D)\quad \text{ with }\quad [\Xi_{\bm \xi} f](\x) &\coloneqq \frac{1}{2^{d+1}} \sum_{\bm k \in \{-1,1\}^{d+1}} \left(\underbrace{\left(\prod_{i=1}^{d+1}k_i^{\xi_i}\right)}_{\in \{-1,1\}} f(\bm k \odot \x )\right)\notag
\end{align}
where $\xi_i=1$ belongs to an odd function in direction $i$ and $\xi_i=0$ belongs to an even function in direction $i$. In~\eqref{eq:def_even_odd_d} the construction of the terms is exactly such that one factor $(-1)$ is added for every $i\in [d+1]$, for which $k_i=-1$ and $\xi_i=1$.
\begin{definition}
We denote the vector $\bm \xi$ as the \textbf{parity} of the function $g$, if for all $i\in [d+1]$, for almost all $\x\in \mathbb D$,
\begin{align*}
\begin{split}
\xi_i =0 &\Leftrightarrow g(\x)  = g(\x_{[d+1]\backslash i}, -x_i), \\
\xi_i =1 &\Leftrightarrow g(\x)  = -g(\x_{[d+1]\backslash i}, -x_i)  .
\end{split}
\end{align*}
For $f\in L_2(\mathbb D)$ we define the corresponding function spaces 
\begin{equation*}
L_2(\bm \xi) = \{f\in L_2(\mathbb D)\mid f \text{ has parity } \bm \xi\}.
 \end{equation*}
The spaces $L_2(\bm \xi)$ are orthogonal, i.e.
$$L_2(\mathbb D)= \bigoplus_{\bm \xi\in \{0,1\}^{d+1}} L_2(\bm \xi).$$
\end{definition}

\section{Calculation of the spherical ANOVA decompositions in Section~\ref{sec:numerics}}\label{sec:app1}
In this section we calculate the spherical ANOVA decomposition from Definition~\ref{def:spherical_anova_final} for some of the test functions. We use in the following these integrals frequently,
\begin{align*}
\int_{\S^{d-1}}x_i^2 \d \mu_{d-1}(\x) &= \frac{\omega_{d-1}}{d}\quad \text{ if } i\in [d+1] \\
\int_{\S^{d-1}}x_i^{2k+1} \d \mu_{d-1}(\x) &= 0 \quad \text{ if } k\in \N, i\in [d+1].
\end{align*}
\begin{itemize}

\item$f_A(\x)=x_1  x_2^3 + 2 x_3  x_4^5 + 0.05x_5   $\\
The parity decomposition~\eqref{eq:def_even_odd_d} of this function is
\begin{equation*}
\Xi_{\bm \xi} f_A(\x)= \begin{cases}
 x_1x_2^3 & \text{ if } \bm \xi = \bm e_1 +\bm e_2  \\
  2 x_3  x_4^5 & \text{ if } \bm \xi = \bm e_3 +\bm e_4  \\
   0.05x_5 & \text{ if } \bm \xi = \bm e_5   \\
 0& \text{ otherwise }.
\end{cases}
\end{equation*}

Applying the operators $P_{\bm u}$ to this function yields that 
\begin{align*}
P_{\{1,2\}}f_A(x_1,x_2) &=  x_1x_2^3\\
P_{\{3,4\}} f_A(x_3,x_4) &=  2 x_3  x_4^5  \\
P_{\{5\}} f_A(x_5) &= 0.05x_5 
\end{align*}
Since all the terms are odd in all directions, the spherical ANOVA decomposition from Definition~\ref{def:spherical_anova_final} directly gives that
\begin{align*}
f_{\varnothing,\bm 0 } &= 0,\\
f_{\{1,2\},\bm e_1 + \bm e_2 }(x_1,x_2) &= x_1x_2^3\\
f_{\{3,4\},\bm e_3 + \bm e_4}(x_3,x_4) &=2 x_3  x_4^5\\
f_{\{5\},\bm e_5 } (x_5)&=0.05x_5,
\end{align*}
and all other terms are zero.

\item$f_B(\x)= x_2x_1^2   $\\
The parity decomposition~\eqref{eq:def_even_odd_d} of this function is
\begin{equation*}
\Xi_{\bm \xi}f_B(\x) = \begin{cases}
 x_2x_1^2 & \text{ if } \bm \xi = \bm e_2 \\
 0& \text{ otherwise }.
\end{cases}
\end{equation*}
Applying the operators $P_{\bm u}$ with $2\in \bm u$ to this function yields that 
\begin{small}
\begin{align*}
P_{2}f_B(x_2) &= \frac{1}{d}x_2(1-x_2^2)\\
P_{\{1,2\}} f_B(x_1,x_2) &=x_2x_1^2  \\
P_{\{2\}\cup \bm a} f_B(x_{\{2\}\cup \bm a}) &= \frac{1}{\omega_{d-1-|\bm a|}}x_2\int_{\S^{d-|\bm a|-1}} (1-\norm{\bm x_{\bm a}}^2-x_2^2)x_1^2 \d \mu_{d-|\bm a|-1}(\x_{\bm a^c\backslash 2})\\
&=\frac{x_2}{d-|\bm a|} (1-\norm{\x_{\bm a}}^2-x_2^2) 
&\quad \text{if }\bm a\subset [d+1]\backslash \{1,2\}  \\
P_{\{1,2\}\cup \bm a} f_B& = f_B &\quad \text{if }\bm a\subset [d+1]\backslash \{1,2\}
\end{align*}
\end{small}

Here we see that the spherical ANOVA decomposition from Definition~\ref{def:spherical_anova_final} blurs the original function (which depends only on the variables $x_1$ and $x_2$ also to the other variables due to the geometric properties on the sphere. But with \changed{Definition~\ref{def:final_def}} we describe the spherical ANOVA decomposition by
\begin{small}
\begin{align*}
f_{\varnothing,\bm 0 } &= 0,\\
f_{2,\bm e_2 }(x_2) &= \frac{1}{d}x_2-\frac{1}{d}x_2^3\\
f_{\{1,2\},\bm e_2}(x_1,x_2) &=x_2x_1^2 - \frac{1}{d}x_2+\frac{1}{d}x_2^3.
\end{align*}
\end{small}
All other terms are zero. 

\item $f_C(\x)= x_1^4 + x_2^2 $\\
This is a pure even function, such that the parity decomposition of this function is 
\begin{equation*}
\Xi_{\bm \xi}f_C(\bm x) = \begin{cases}
x_1^4 +x_2^2 & \text{ if } \bm \xi = \bm 0, \\
 0& \text{ otherwise.}
\end{cases}
\end{equation*}
Applying the operators $P_{\bm u}$ to this function yields due to the Beta function,
\begin{allowdisplaybreaks}
\begin{align*}
P_{\varnothing}x_1^4 & = \frac{1}{\omega_d}\int_{\S^d} f_C \d \mu_{d}(\x)
=\frac{\omega_{d-1}}{\omega_d} \int_{-1}^1 x_1^4 (1-x_1^2)^{\tfrac{d-2}{2}}\d x_1\\\
&=\frac{\Gamma(\frac{d+1}{2})}{\sqrt{\pi}\Gamma(\frac{d}{2})} 2 \, \int_{0}^1 t^2 (1-t)^{\tfrac{d-2}{2}}\frac{1}{2\sqrt{t}}\d t
=\frac{\Gamma(\frac{d+1}{2})}{\sqrt{\pi}\Gamma(\frac{d}{2})} \int_{0}^1 t^{3/2} (1-t)^{\tfrac{d-2}{2}}\d t\\
&=\frac{\Gamma(\frac{d+1}{2})}{\sqrt{\pi}\Gamma(\frac{d}{2})}\, \frac{\Gamma(\tfrac 52) \Gamma(\tfrac d2)}{\Gamma(\frac{d+5}{2})}
=\frac{\Gamma(\frac{d+1}{2})}{\sqrt{\pi}}\, \frac{3\sqrt{\pi} }{4\,\Gamma(\frac{d+5}{2})}
=\frac{3\Gamma(\frac{d+1}{2}) }{4\,\Gamma(\frac{d+5}{2})}
=\frac{3 }{4\, \tfrac{d+3}{2}\tfrac{d+1}{2}}=\frac{3 }{(d+3)(d+1)},\\
P_{\varnothing}x_2^2 &  
=\frac{\omega_{d-1}}{\omega_d} \int_{-1}^1 x_2^2 (1-x_2^2)^{\tfrac{d-2}{2}}\d x_2\\\
&=\frac{\Gamma(\frac{d+1}{2})}{\sqrt{\pi}\Gamma(\frac{d}{2})} 2 \, \int_{0}^1 t (1-t)^{\tfrac{d-2}{2}}\frac{1}{2\sqrt{t}}\d t
=\frac{\Gamma(\frac{d+1}{2})}{\sqrt{\pi}\Gamma(\frac{d}{2})} \int_{0}^1 t^{1/2} (1-t)^{\tfrac{d-2}{2}}\d t\\
&=\frac{\Gamma(\frac{d+1}{2})}{\sqrt{\pi}\Gamma(\frac{d}{2})}\, \frac{\Gamma(\tfrac 32) \Gamma(\tfrac d2)}{\Gamma(\frac{d+3}{2})}
=\frac{\Gamma(\frac{d+1}{2})}{\sqrt{\pi}}\, \frac{\tfrac 12\sqrt{\pi} }{\Gamma(\frac{d+3}{2})}
=\frac{\Gamma(\frac{d+1}{2}) }{2\,\Gamma(\frac{d+3}{2})}
=\frac{1 }{d+1},\\
P_{\{1\}}f_C &  = x_1^4 + \frac{1}{d} (1-x_1^2) , \\
P_{\{2\}}f_C(x_2) &= \tfrac{3}{d(d+2)} (1-x_2^2)^2  + x_2^2,\\
P_{\{i\}}f_C(x_i) &= \tfrac{3}{d(d+2)} (1-x_i^2)^2 + \frac{1}{d} (1-x_i^2) \quad i\in \{3,4,5,\ldots,d+1\}
\\
P_{\{1,2\}}f_C(x_1,x_2) &= x_1^4 + x_2^2.
\end{align*}
\end{allowdisplaybreaks}
The spherical ANOVA decomposition \changed{from Definition~\ref{def:final_def}} of this function, with regard to the cancellations in Section~\ref{sec:challenge} then is
\begin{align*}
f_{\varnothing,\bm 0 } & = \frac{d+9}{(d+3)(d+1)},\\
f_{1,\bm e_1 }(x_1) &= 0, &f_{1,\bm 0 }(x_1) &= x_1^4 -  \frac{3 }{(d+3)(d+1)},\\
f_{2,\bm e_2 }(x_2) &= 0, &f_{2,\bm 0 }(x_2) &= x_2^2  -\frac{1}{d+1},
\end{align*}
and all other terms are zero. 
\end{itemize}
\end{appendix}

\end{document}